\documentclass[preprint,3p,times,sort&compress]{elsarticle}

\usepackage{amssymb}
\usepackage{amsmath}
\usepackage{booktabs}
\usepackage{url}
\usepackage{tikz}
\usepackage{gnuplot-lua-tikz}
\usepackage{algorithm}
\usepackage{algpseudocode}
\usepackage{mathrsfs} 
\newcommand{\domD}{\mathscr{D}}
\renewcommand{\pi}{\piup}

\renewcommand{\Im}{\operatorname{Im}}
\DeclareMathOperator{\arsinh}{arsinh}
\DeclareMathOperator{\artanh}{artanh}
\DeclareMathOperator{\diag}{diag}
\DeclareMathOperator{\OO}{O}
\makeatletter
\newif\ifkp@upRm
\DeclareSymbolFont{Letters}{OML}{jkp}{m}{it}
\DeclareMathSymbol{\uppartial}{\mathord}{Letters}{128}
\makeatother
\DeclareMathOperator{\E}{e}
\DeclareMathOperator{\I}{i}
\newcommand{\D}{\,\mathrm{d}}
\newcommand{\mathbd}{\boldsymbol}
\newdefinition{definition}{Definition}[section]
\newtheorem{theorem}{Theorem}[section]
\newtheorem{lemma}[theorem]{Lemma}
\newtheorem{proposition}[theorem]{Proposition}

\newtheorem{remark}{Remark}[section]
\newproof{proof}{Proof}
\numberwithin{equation}{section}

\journal{Elsevier}

\begin{document}

\begin{frontmatter}



\title{Improved algorithms for computing the matrix logarithm based on the double-exponential formula~\tnoteref{mytitlenote}}
\tnotetext[mytitlenote]{This work was partially supported by JSPS
Grant-in-Aid for Scientific Research (C) JP23K03218.}


\author[HCU]{Tomoaki Okayama\corref{cor1}} 
\cortext[cor1]{Corresponding author}
\affiliation[HCU]{organization={Hiroshima City University},
addressline={3-4-1, Ozuka-Higashi, Asaminami-ku},
city={Hiroshima},
postcode={731-3194},
country={Japan}}
\ead{okayama@hiroshima-cu.ac.jp}

\author[secondauthor]{Tsubasa Setogawa}
\affiliation[secondauthor]{organization={Daiko Techno Co., Ltd.},
addressline={Daiko Hiratsuka Building, 1-14, Higashi-Hiratsuka-cho, Naka-ku},
city={Hiroshima},
postcode={730-0025},
country={Japan}}

\begin{abstract}
Tatsuoka et al.\ proposed an efficient algorithm for computing the matrix
logarithm using the double-exponential (DE) formula. Its discretization
error decreases exponentially as the number of quadrature nodes grows,
and its nested grids allow previously computed values to be reused when
the mesh is refined. They also reported that their approach is more robust
than Gauss--Legendre quadrature for ill-conditioned matrices.
We propose two algorithms based on the DE formula. The first adaptively
refines the mesh while reusing previously computed values and requires
no matrix norm computations. The second uses a rigorous error bound to
select the number of quadrature nodes a priori. Numerical experiments
show that both methods attain lower errors than Tatsuoka et al.'s method
at comparable elapsed times over the tested ranges.
\end{abstract}



\begin{keyword}
Matrix function\sep numerical integration\sep double-exponential formula
\sep computable error bound


\MSC[2020] 65F60 65D30
\end{keyword}

\end{frontmatter}



\section{Introduction}
In this paper, we consider numerical methods for computing the matrix
logarithm. For a square matrix $A \in \mathbb{C}^{m\times m}$, a matrix
logarithm of $A$ is a matrix $X \in \mathbb{C}^{m\times m}$ satisfying
\[
\exp(X) = A,
\]
where $\exp(X) = \sum_{k=0}^{\infty}X^k/k!$.  It is known that
infinitely many such matrices $X$ exist for any nonsingular matrix
$A$~\cite{higham08:_funct_matric}. In particular, for a matrix $A$ with
no eigenvalues on the non-positive real axis $(-\infty,0]$, there
exists a unique matrix logarithm $X$ whose eigenvalues all lie in the
strip $\{ z \in \mathbb{C} : |\Im z| < \pi\}$~\cite{higham08:_funct_matric};
this unique solution is called the
principal logarithm and is denoted by $\log(A)$.  Throughout this paper,
we restrict our attention to the computation of $\log(A)$ under the
condition that $A$ has no eigenvalues on $(-\infty,0]$.  The matrix
logarithm finds applications across various fields, including quantum
entropy calculations~\cite{Zachos2007,MullerLennert2013}, biomolecular
dynamics~\cite{Horenko}, viscoelastic
fluids~\cite{Hulsen2005}, diffusion tensor
imaging~\cite{Arsigny2006}, and machine
learning~\cite{Huang2017,Anitescu2012}.

Numerical methods for computing the matrix logarithm $\log(A)$ include
inverse scaling and squaring (ISS) algorithms based on Pad\'{e}
approximation~\cite{Al-Mohy}, arithmetic-geometric mean (AGM)
iterations~\cite{Cardoso}, and numerical integration
methods~\cite{Davies,TatsuEn,Ibanez2023}. Numerical integration methods
are attractive for parallel computation because integrand evaluations
at specified quadrature nodes are independent~\cite{TatsuEn}. They can
also compute $\log(A)\mathbd{b}$ by solving shifted linear systems
without forming $\log(A)$, thereby exploiting the sparsity of $A$
~\cite{Davies,TatsuEn}. Recent quadrature methods for the matrix logarithm
include Tatsuoka et al.'s algorithm~\cite{TatsuEn}, which applies the
double-exponential (DE) formula~\cite{takahasi74:_doubl},
and Ib\'{a}\~{n}ez et al.'s method~\cite{Ibanez2023}, which combines
Romberg quadrature with ISS.

A general integral representation of the matrix logarithm is given by
\begin{align}\label{eq1}
\log(A) = (A-I) \int_0^1 \left[(1-t)I + tA\right]^{-1} \D t,
\end{align}
where $I$ denotes the identity
matrix~\cite[Theorem 11.1]{higham08:_funct_matric}.
This integral can be approximated using Gauss--Legendre (GL) quadrature.
Aceto and Durastante~\cite{AcetoDurastante2025} derived error estimates
for this approximation, which can be used to select the number of
quadrature nodes and the number of inverse scaling and squaring steps.
Although GL quadrature is effective when $A$ is close to the identity
matrix, its convergence can be slow for ill-conditioned matrices or
when the integrand varies sharply near the endpoints~\cite{TatsuEn}.

For such challenging cases, the DE formula offers an alternative.
The DE formula combines (i) the change of variables
$t = \phi(x) = 1/(1 + \E^{-\pi \sinh x})$ with (ii) the composite
trapezoidal rule applied to the transformed integral.
The change of variables transforms Eq.~\eqref{eq1} into
\begin{equation}
\log(A) = (A - I)
\int_{-\infty}^{\infty}
\left(A + \E^{-\pi \sinh x} I\right)^{-1} \frac{\pi \cosh x}{1 + \E^{\pi \sinh x}} \D x.
\label{eq:logA-after-DEtrans}
\end{equation}
The transformed integrand decays double-exponentially as $|x|\to\infty$.
Using the error bound of Trefethen and
Weideman~\cite[Theorem 5.1]{Tre2014}, Tatsuoka et al.~\cite{Tatsu}
analyzed the convergence of the DE formula for Hermitian positive
definite matrices. They derived an exponential convergence rate
with respect to the number $M$ of quadrature nodes. Their analysis
assumes that the integral over $(-\infty,\infty)$ is truncated to a
fixed finite interval $[l,r]$ and that the resulting truncation error is
negligible. In practical implementations~\cite{TatsuEn}, the interval is
chosen so that this error is below a prescribed tolerance $\varepsilon>0$.
If the interval is too narrow, the
truncation error limits the attainable accuracy; if it is too wide, a
larger number $M$ of quadrature nodes is needed to maintain the same mesh
size.
The interval-selection procedure of Tatsuoka et al.~\cite{TatsuEn}
uses matrix norms and spectral-radius information to control the
relative truncation error, leading to matrix-dependent parameter
formulas (see Eqs.~\eqref{eq:finite_s}--\eqref{eq:finite_ab} for details).
However, increasing $M$ with $[l,r]$ fixed leaves the interval truncation
error unchanged: the approximation converges to the truncated integral,
not in general to $\log(A)$. Thus, the fixed-interval convergence-rate
analysis does not by itself establish convergence to $\log(A)$ as
$M\to\infty$ with $\varepsilon$ held fixed.

To overcome this limitation, we propose two algorithms based on the DE
formula. Both algorithms use the following approximation. For a positive
integer $n$, set the mesh size $h>0$ and the truncation number
$N\in\mathbb{N}$ as
\[
h = \frac{1}{n},
\qquad
N = \left\lceil n \cdot  \arsinh(\pi n) \right\rceil,
\]
and approximate $\log(A)$ by
\[
\log(A)
\approx
(A - I)
h \sum_{k=-N}^{N}
\left(A + \E^{-\pi \sinh (k h)} I\right)^{-1}
 \frac{\pi \cosh (k h)}{1 + \E^{\pi \sinh (k h)}}.
\]
This approximation uses $M=2N+1$ quadrature nodes. With this choice,
$h\to 0$ and $Nh\to\infty$ as $n\to\infty$, so the truncation interval
$[-Nh,Nh]$ expands as the mesh is refined.
The first, norm-free adaptive algorithm refines the mesh and truncation
interval while reusing previous evaluations, without matrix norm
computations or eigenvalue estimates. The second, error-bound-based
algorithm is formulated for Hermitian positive definite matrices and uses
an explicit overall-error bound, computable from the extreme eigenvalues
of $A$, to select $n$ a priori for a prescribed tolerance in exact
arithmetic. Our analysis proves convergence to $\log(A)$ and derives this
bound for Hermitian positive definite matrices; both results extend to
diagonalizable matrices with positive real eigenvalues.

The remainder of this paper is organized as follows.
Section~\ref{sec:Tatsuoka} reviews the method of Tatsuoka et al.\ based
on the DE formula, including its interval-selection strategy,
discretization-error analysis, and adaptive implementation with reuse
of previous evaluations.
Section~\ref{sec:our-algorithm} presents the proposed algorithms,
analyzes truncation, discretization, and total errors, and describes the
norm-free stopping criterion and error-bound-based a priori node
selection. Section~\ref{sec:numerical} compares the proposed algorithms
with Tatsuoka et al.'s method in terms of accuracy, computation time,
and termination behavior, and examines the error bound.
Section~\ref{sec:conclusion} presents concluding remarks. The appendix
provides an auxiliary estimate for tail trapezoidal sums.

\section{The computation algorithm of Tatsuoka et al.}\label{sec:Tatsuoka}

\subsection{Idea of the approximation}
\label{sec:4-2-1}
In this section, we explain the computation algorithm for the matrix logarithm by Tatsuoka et al.~\cite{Tatsu,TatsuEn}.
They initially used the DE transformation
$t=1/(1+\E^{-2\sinh x})$~\cite{TatsuEn}, whereas their subsequent
convergence analysis used
$t=1/(1+\E^{-\pi\sinh x})$~\cite{Tatsu}, which coincides with the
original double-exponential transformation introduced by Takahasi and
Mori~\cite{takahasi74:_doubl}.
We use the latter transformation consistently, including when describing
their algorithm.
Let $F_{\mathrm{DE}}(x)$ be the integrand in~\eqref{eq:logA-after-DEtrans}, namely,
\begin{equation}
F_{\mathrm{DE}}(x) = (A-I)\left(A + \E^{-\pi\sinh x}I\right)^{-1}
\frac{\pi\cosh x}{1 + \E^{\pi\sinh x}}.
\label{eq:FDE-x}
\end{equation}
For real $x$, the matrix $A+\E^{-\pi\sinh x}I$ is nonsingular because
$-\E^{-\pi\sinh x}<0$ cannot be an eigenvalue of $A$.
Because $1+\E^{\pi\sinh x}>0$ as well, $F_{\mathrm{DE}}(x)$ is well
defined for every $x\in\mathbb{R}$.
They approximate the integral in~\eqref{eq:logA-after-DEtrans} by
\begin{align}
\int_{-\infty}^{\infty} F_{\mathrm{DE}}(x)\D x
&\approx \int_{l}^{r} F_{\mathrm{DE}}(x)\D x
\approx h\left\{\frac{1}{2}F_{\mathrm{DE}}(l)
+ \sum_{k=1}^{M-2} F_{\mathrm{DE}}(l+kh)
+ \frac{1}{2}F_{\mathrm{DE}}(r)\right\},
\quad h=\frac{r-l}{M-1},
\label{eq4.3}
\end{align}
where $M$ denotes the number of quadrature nodes.
The first step truncates the integral to $[l,r]$, with endpoints chosen
to control the truncation error (Section~\ref{sec:4-2-2}); the second
applies the trapezoidal rule (Section~\ref{sec:4-2-3}). We review the
overall error in Section~\ref{subsec:total_error} and the adaptive
implementation in Section~\ref{sec:adaptive-Tatsuoka}.

\subsection{Selection of interval endpoints $l$ and $r$}\label{sec:4-2-2}


%

Tatsuoka et al.\ first derived a bound for the truncation
error~\cite[Proposition 3]{TatsuEn} and, based on the result,
proposed the following procedure for choosing $[l,r]$ so that
the relative truncation error in the $2$-norm is at most $\varepsilon>0$.

\begin{proposition}[Tatsuoka et al.~{\cite[Proposition 5]{TatsuEn}}]\label{prop:finite_interval}
Let $A \ne I$, and $\theta$ be a lower bound for $\|\log(A)\|_2$. Assume that the tolerance $\varepsilon > 0$ satisfies the following condition:
\begin{align*}
\varepsilon <
\frac{3\|A - I\|_2 \|A^{-1}\|_2}
{\theta(1 + \|A^{-1}\|_2)}.
\end{align*}
Let $s$ be the positive real solution to the following equation:
\begin{align}
\frac{1}{\theta}\left(-\log(s) + \frac{1 - s}{2s}\right)
\|A - I\|_2 \|A^{-1}\|_2 = \frac{\varepsilon}{2}.
\label{eq:finite_s}
\end{align}
Then, we define:
\begin{alignat}{2}
l &= \arsinh\left(\frac{2}{\pi}\artanh(2a - 1)\right),
&\qquad
r &= \arsinh\left(\frac{2}{\pi}\artanh(2b - 1)\right),
\label{eq:finite_lr}
\intertext{where}
a &= \min\left\{ \frac{\theta\varepsilon}{3\|A - I\|_2}, \frac{1}{2\|A - I\|_2} \right\},
&\qquad
b &= \max\left\{ s, \frac{2\|A^{-1}\|_2}{2\|A^{-1}\|_2 + 1} \right\}.
\label{eq:finite_ab}
\end{alignat}
Under these conditions, the following inequality holds:
\begin{align*}
\frac{\left\|\log(A) - \int_{l}^{r} F_{\mathrm{DE}}(x)\D x\right\|_2}{\|\log(A)\|_2}
\le \varepsilon.
\end{align*}
\end{proposition}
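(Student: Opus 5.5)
The plan is to go back to the $t$-variable. The map $t=\phi(x)=1/(1+\E^{-\pi\sinh x})$ is increasing, and $\phi(l)=a$, $\phi(r)=b$, since $\artanh(2a-1)=\frac12\log(a/(1-a))$ inverts $\phi$. Hence
\[
\log(A)-\int_l^r F_{\mathrm{DE}}(x)\D x=(A-I)\int_0^a[(1-t)I+tA]^{-1}\D t+(A-I)\int_b^1[(1-t)I+tA]^{-1}\D t .
\]
Both $a$ and $b$ lie in $(0,1)$: the hypothesis on $\varepsilon$ forces $a<1$, and $s\in(0,1)$ because the left side of \eqref{eq:finite_s} decreases from $+\infty$ to $0$ on $(0,1]$. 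It therefore suffices to show that each tail has $2$-norm at most $\theta\varepsilon/2\le\|\log(A)\|_2\,\varepsilon/2$, and then divide by $\|\log(A)\|_2$.

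\textbf{Left tail.} For $t\le a\le 1/(2\|A-I\|_2)$ we write $(1-t)I+tA=I+t(A-I)$ with $\|t(A-I)\|_2\le 1/2$. A Neumann series then gives $\|[I+t(A-I)]^{-1}\|_2\le 2$. Consequently the left tail is bounded by $2a\|A-I\|_2\le 2\theta\varepsilon/3$. This exceeds $\theta\varepsilon/2$, so the bound has to be sharpened. One option is to use $1/(1-t\|A-I\|_2)$ directly, which gives $\int_0^a \|A-I\|_2/(1-t\|A-I\|_2)\D t=-\log(1-a\|A-I\|_2)$, and check this against $\varepsilon\theta/2$. Since the constants are fixed by the cited proposition, I will follow \cite[Proposition 3]{TatsuEn}: its bound takes the form $\frac{3}{2}a\|A-I\|_2$-type, which gives $\le\theta\varepsilon/2$ with the factor $3$ in $a$. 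I would reproduce this by bounding $-\log(1-u)\le \frac{3}{2}u$ for $u\le 1/2$. Indeed $-\log(1/2)\approx0.693\le0.75$, and convexity gives the bound on all of $[0,1/2]$.

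\textbf{Right tail.} For $t\ge b$ we write $(1-t)I+tA=tA\,(I+\frac{1-t}{t}A^{-1})$. Since $b\ge 2\|A^{-1}\|_2/(2\|A^{-1}\|_2+1)$, we have $\frac{1-t}{t}\|A^{-1}\|_2\le 1/2$, so
\[
\|[(1-t)I+tA]^{-1}\|_2\le \frac{\|A^{-1}\|_2}{t}\cdot\frac{1}{1-\frac{1-t}{t}\|A^{-1}\|_2}.
\]
The intended bound, matching \eqref{eq:finite_s}, is $\|A-I\|_2\|A^{-1}\|_2\int_b^1 \bigl(\frac1t+\frac{1-t}{2t^2}\cdot c\bigr)\D t$. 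Integrating gives $\int_b^1 t^{-1}\D t=-\log b$, and a correction term of the form $\frac{1-b}{2b}$ arises from $\int_b^1 t^{-2}\D t=(1-b)/b$. I would obtain it from $\frac{1}{1-u}\le 1+2u\cdot(\ldots)$ with $u\le 1/2$, or more simply from a first-order expansion that yields $(1/t)(1+\frac{1-t}{2t})$-type factors. Because $b\ge s$ and the function $g(s)=-\log s+\frac{1-s}{2s}$ is decreasing, the right tail is at most $\frac{1}{\theta}$-normalized to $\varepsilon/2$ by \eqref{eq:finite_s}.

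The main obstacle is matching the exact constants, namely the factor $3$ in $a$ and the term $\frac{1-s}{2s}$. This requires choosing the right elementary inequality for the resolvent inverse on each tail; everything else is bookkeeping.
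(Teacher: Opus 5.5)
The paper quotes this proposition from Tatsuoka et al.\ without proof, so your argument has to stand on its own. Several parts are correct: the reduction to the two $t$-tails, the facts $0<a<b<1$ (indeed $a<\nu/(1+\nu)\le 2\nu/(2\nu+1)\le b$, with $\mu=\|A-I\|_2$ and $\nu=\|A^{-1}\|_2$), and the uniqueness of $s\in(0,1)$. Your final left-tail estimate $-\log(1-a\mu)\le\frac32a\mu\le\theta\varepsilon/2$ is also fine.

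\textbf{The gap is the right tail.} That is exactly where the term $\frac{1-s}{2s}$ has to be produced, and there you only gesture at an inequality, with an undetermined constant $c$ and a placeholder ``$(\ldots)$''.
\begin{itemize}
\item The one concrete suggestion, $\frac{1}{1-u}\le 1+\frac{1-t}{2t}$ with $u=\nu(1-t)/t$, is not an upper bound. At $t=2\nu/(2\nu+1)$ one has $u=\frac12$, so the left side is $2$ while the right side is $1+\frac{1}{4\nu}$. It therefore fails for every $\nu>\frac14$.
\item Your bookkeeping is also off. The correction does not come from $\int_b^1t^{-2}\D t$ alone, because $\int_b^1\frac{1-t}{t^2}\D t=\frac{1-b}{b}+\log b$, and the $\log b$ piece interacts with the main term.
\end{itemize}
The missing idea is that the second-order factor $\nu$ must be eliminated quantitatively via the lower bound on $b$. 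You use that bound only to make the Neumann series converge.

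\textbf{One way to close it.} For $t\in[b,1]$ we have $u\le\nu(1-b)/b\le\frac12$. By convexity of $1/(1-u)$ on $[0,\frac12]$, this gives $\frac{1}{1-u}\le1+2u$, and hence
\[
\Bigl\|(A-I)\int_b^1[(1-t)I+tA]^{-1}\D t\Bigr\|_2
\le\mu\int_b^1\frac{\nu}{t}(1+2u)\D t
=\mu\nu\Bigl[-\log b+2\nu\Bigl(\frac{1-b}{b}+\log b\Bigr)\Bigr].
\]
Since $\frac{1-b}{b}+\log b\ge0$ and $2\nu\le\frac{b}{1-b}$ (which is equivalent to $b\ge2\nu/(2\nu+1)$), the bracket is at most $-\log b+1+\frac{b\log b}{1-b}$.

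It remains to show $1+\frac{b\log b}{1-b}\le\frac{1-b}{2b}$ on $(0,1)$. After multiplying by $2b(1-b)$, this reads $k(b)=1-4b+3b^2-2b^2\log b\ge0$. This holds because $k(1)=k'(1)=0$ and $k''(b)=-4\log b>0$ on $(0,1)$, so $k'<0$ there and $k$ decreases to $k(1)=0$.

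This yields a right tail of at most $\mu\nu\bigl(-\log b+\frac{1-b}{2b}\bigr)\le\mu\nu\bigl(-\log s+\frac{1-s}{2s}\bigr)=\theta\varepsilon/2$, using $b\ge s$ and the monotonicity you noted. Combined with your left-tail bound, this proves the proposition.
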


\begin{remark}\label{rem:revised-s}
Finding $s$ in Eq.~\eqref{eq:finite_s} is generally difficult.
Since $s \approx 1$ when the tolerance $\varepsilon$ is sufficiently small,
Tatsuoka et al.~\cite{TatsuEn} approximate its left-hand expression near $s=1$ using
$-\log s+(1-s)/(2s)\approx 3(1-s)/2$, giving
\[
\tilde{s}
= 1 - \frac{\theta \varepsilon}{3 \| A-I\|_2 \| A^{-1}\|_2},
\]
and use $\tilde{s}$ as a surrogate for $s$.
However, for $s<1$,
$-\log s+(1-s)/(2s)>3(1-s)/2$, so the first-order approximation
underestimates the exact expression. Consequently, replacing $s$ by
$\tilde{s}$ does not, in general, guarantee the key inequality
\begin{equation}
\frac{1}{\theta}\left(-\log(b) + \frac{1-b}{2b}\right)
\|A-I\|_2 \|A^{-1}\|_2\leq \frac{\varepsilon}{2},
\label{eq:finite_half}
\end{equation}
which is indispensable in their proof.
As a remedy, one can use the alternative upper bound
$-\log s+(1-s)/(2s)\le 2(1-s)$ for $2/3\le s\le1$. Solving the
corresponding equality gives the corrected surrogate
\begin{equation}
\tilde{s}
= 1 - \frac{\theta \varepsilon}{4\|A-I\|_{2}\|A^{-1}\|_{2}}.
\label{eq:revise-s}
\end{equation}
Since $b\ge\max\{\tilde{s},2\|A^{-1}\|_2/(2\|A^{-1}\|_2+1)\}\ge2/3$,
we have $-\log b+(1-b)/(2b)\le2(1-b)\le2(1-\tilde{s})$;
substituting~\eqref{eq:revise-s} gives~\eqref{eq:finite_half}.
\end{remark}

\subsection{Evaluation of the discretization error}\label{sec:4-2-3}

We now review Tatsuoka et al.'s estimate of the trapezoidal
discretization error in~\eqref{eq4.3}.
Ignoring the contribution outside $[l,r]$, this reduces to bounding
\[
\int_{-\infty}^{\infty} F_{\mathrm{DE}}(x)\D x
- h \sum_{k=-\infty}^{\infty} F_{\mathrm{DE}}(kh).
\]
When $A$ is Hermitian positive definite, unitary diagonalization reduces
the $2$-norm of the matrix discretization error to the maximum of the
corresponding scalar discretization errors over the eigenvalues of $A$.
More precisely,
\begin{equation}
\left\|
\int_{-\infty}^{\infty} F_{\mathrm{DE}}(x)\D x
- h \sum_{k=-\infty}^{\infty} F_{\mathrm{DE}}(kh)
\right\|_2
=
\max_{\lambda \in \Lambda(A)}
\left|
\int_{-\infty}^{\infty} f_{\mathrm{DE}}(x,\lambda) \D x
- h \sum_{k=-\infty}^{\infty} f_{\mathrm{DE}}(kh,\lambda)
\right|,
\label{eq:reduction-to-scalar}
\end{equation}
where $\Lambda(A)$ denotes the spectrum of $A$, and $f_{\mathrm{DE}}(x,\lambda)$
is defined by
\begin{equation}
f_{\mathrm{DE}}(x,\lambda)
= \frac{(\lambda - 1)\pi\cosh x}{(\lambda + \E^{-\pi\sinh x})(1 + \E^{\pi\sinh x})}.
\label{eq:scalar-fDE}
\end{equation}
This is the scalar counterpart of $F_{\mathrm{DE}}(x)$ in
Eq.~\eqref{eq:FDE-x}, obtained by replacing $A$ and $I$ with
$\lambda$ and $1$, respectively.
To analyze the discretization error,
the following function space is frequently used. Here, let $\domD_d$ denote the strip region defined by
$\domD_d=\{\zeta\in\mathbb{C}: |\Im\zeta|< d\}$ for $d>0$.

\begin{definition}
\label{def:func_space_BDd}
Let $d$ be a positive real number.
Then, $\mathbf{B}(\domD_d)$ denotes the family of all functions
$f$ that are analytic in $\domD_d$
and satisfy
\begin{align*}
\lim_{x \to \pm \infty} \int_{-d}^{d} |f(x + \I y)| \D y = 0
\end{align*}
and $\mathcal{N}(f,d)$ is finite, where
\begin{align*}
 \mathcal{N}(f,d)
 = \lim_{y\to d - 0}
\int_{-\infty}^{\infty}
\left\{ |f(x+\I y)|+|f(x-\I y)| \right\} \D x.
\end{align*}
\end{definition}

The discretization error for the integrand $f\in\mathbf{B}(\domD_d)$
has been estimated as follows.

\begin{theorem}[classical; e.g.~Stenger~{\cite[Theorem~3.4]{Stenger1981}}]
\label{theo1}
Let $f\in\mathbf{B}(\domD_d)$. Then,
\[
\left|
\int_{-\infty}^{\infty}f(x)\D x - h\sum_{k=-\infty}^{\infty}f(kh)
\right|
\leq\mathcal{N}(f,d)\frac{\E^{-2\pi d/h}}{1-\E^{-2\pi d/h}}.
\]
\end{theorem}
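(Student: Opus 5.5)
The plan is the classical contour-integral argument: write the trapezoidal error as a contour integral of $f$ against a kernel with poles at the nodes, then push the contour out to the boundary of the strip $\domD_d$. Fix $0<y<d$ and a large $X>0$, and let $R$ be the rectangle with vertices $\pm X\pm\I y$. Take $X=(K+\tfrac12)h$ for integers $K\to\infty$, so that no node lies on the vertical sides. Use the meromorphic kernel $\pi/h\cdot\cot(\pi z/h)$. It has simple poles at $z=kh$ with residue $1$, so by the residue theorem
\[
h\sum_{|k|\le K} f(kh) = \frac{h}{2\pi\I}\oint_{\partial R} f(z)\,\frac{\pi}{h}\cot\!\left(\frac{\pi z}{h}\right)\D z .
\]

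Next, rewrite the integral $\int_{-X}^{X} f(x)\D x$ so it can be compared with this. Cauchy's theorem lets us express it as the integral of $f$ along the lower path $-X\to -X-\I y\to X-\I y\to X$, and also as the integral along the upper path $-X\to -X+\I y\to X+\I y\to X$. Split
\[
\cot w = -\I + \frac{-2\I\,\E^{2\I w}}{1-\E^{2\I w}} \quad(\Im w>0),
\qquad
\cot w = \I + \frac{2\I\,\E^{-2\I w}}{1-\E^{-2\I w}}\quad(\Im w<0).
\]
The constant parts $\mp\I$ then exactly reproduce $\int_{-X}^{X} f(x)\D x$. The difference $\int_{-X}^X f - h\sum_{|k|\le K} f(kh)$ becomes a sum of two integrals, one along the upper and one along the lower horizontal line, plus vertical-side contributions. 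In these integrals the kernels are $\E^{\pm 2\pi\I z/h}/(1-\E^{\pm2\pi\I z/h})$, suitably oriented.

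Now bound the pieces. On $\Im z=\pm y$ we have $|\E^{\pm 2\pi\I z/h}|=\E^{-2\pi y/h}$ and $|1-\E^{\pm2\pi\I z/h}|\ge 1-\E^{-2\pi y/h}$. Hence the horizontal parts are bounded by
\[
\frac{\E^{-2\pi y/h}}{1-\E^{-2\pi y/h}}\int_{-X}^{X}\left\{|f(x+\I y)|+|f(x-\I y)|\right\}\D x .
\]
On the vertical sides $\Re z=\pm(K+\tfrac12)h$, the cotangent kernel is uniformly bounded in $K$, because these lines stay halfway between poles. So the vertical contributions are at most a constant times $\int_{-y}^{y}|f(\pm X+\I t)|\D t$, which tends to $0$ by the first condition in Definition~\ref{def:func_space_BDd}.

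The main technical obstacle is justifying the limits. First let $K\to\infty$: the sampled sum converges (absolutely in the $\mathbf{B}(\domD_d)$ setting, which the contour bound itself shows), and the horizontal integrals converge to integrals over $\mathbb{R}\pm\I y$. Then let $y\to d-0$. The prefactor tends to $\E^{-2\pi d/h}/(1-\E^{-2\pi d/h})$ and the integral tends to $\mathcal{N}(f,d)$ by definition. To keep the argument clean, I would do it for fixed $y<d$ and only pass to the limit at the very end. This yields the stated bound.
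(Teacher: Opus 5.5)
The paper gives no proof of this theorem; it cites Stenger. Your residue argument is the standard proof of that result. On the positively oriented boundary, your split kernels coincide with Stenger's kernel $\frac{\I}{2}\E^{\I\pi z\operatorname{sgn}(\Im z)/h}/\sin(\pi z/h)$. Your decomposition, the orientation bookkeeping and the constant $1$ in the horizontal bound are all correct. So is the vertical-side estimate: both split kernels have modulus at most $1/2$ on $\Re z=\pm(K+\frac12)h$.

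The step that does not hold as written is the parenthetical claim that the contour bound itself shows (absolute) convergence of the sampled sum. Your identity only controls the difference $E_K=\int_{-X}^{X}f(x)\D x-h\sum_{|k|\le K}f(kh)$. Take $y$ close enough to $d$ that $I(\pm y)=\int_{\mathbb{R}}|f(x\pm\I y)|\D x<\infty$, which finiteness of $\mathcal{N}(f,d)$ guarantees. Then letting $K\to\infty$ shows that $\lim_K E_K$ exists and satisfies the bound. However, identifying this limit with $\int_{\mathbb{R}}f(x)\D x-h\sum_{k\in\mathbb{Z}}f(kh)$ requires knowing separately that the integral (or the sum) converges. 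Membership in $\mathbf{B}(\domD_d)$ gives integrability only on lines near $\Im z=\pm d$, not on $\mathbb{R}$, so this needs its own argument.

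A standard one runs as follows.
\begin{itemize}
\item $|f|$ is subharmonic on the closed rectangle $R$, so $|f(z)|$ is bounded by its harmonic-measure average over $\partial R$.
\item By domain monotonicity, the horizontal sides contribute at most the Poisson integral, for the strip $|\Im z|<y$, of $|f|$ on $\Im z=\pm y$.
\item For fixed $z$, the vertical sides contribute $\OO\bigl(\int_{-d}^{d}|f(\pm X+\I t)|\D t\bigr)$ uniformly in $X$, which tends to $0$ by the first condition in Definition~\ref{def:func_space_BDd}.
\item Integrating in $x$ (Tonelli) gives $\int_{\mathbb{R}}|f(x+\I v)|\D x\le I(y)+I(-y)$ for $|v|\le y$. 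In particular $f\in L^1(\mathbb{R})$.
\item The sub-mean-value property on the disjoint discs of radius $r<\min\{h/2,y\}$ about the nodes then gives $\sum_k|f(kh)|\le\frac{2}{\pi r}\{I(y)+I(-y)\}<\infty$.
\end{itemize}
With this added, your proof is complete. The issue is moot in the paper's application, where $f_{\mathrm{DE}}$ decays double-exponentially on the real axis.
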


The strip of analyticity needed for this bound is given by the following
result of Tatsuoka et al.

\begin{proposition}[Tatsuoka et al.~{\cite[Proposition 2]{Tatsu}}]\label{prop:analytic_strip}
For $\lambda>0$, let
\begin{align}\label{eq:4-1}
d_0(\lambda)
= \arcsin\left( \sqrt{ \frac{(\log\lambda)^2 + 2\pi^2 - \sqrt{\left\{(\log\lambda)^2 + 2\pi^2\right\}^2 - 4\pi^4}} {2\pi^2} } \right).
\end{align}
Then, $f_{\mathrm{DE}}(z,\lambda)$ is analytic in the strip $\mathscr{D}_{d_0(\lambda)}$.
\end{proposition}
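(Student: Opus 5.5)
The plan is to locate every singularity of $f_{\mathrm{DE}}(z,\lambda)$ in the complex plane and show that none of them lies in $\domD_{d_0(\lambda)}$. The numerator $(\lambda-1)\pi\cosh z$ is entire. If $\lambda=1$, then $f_{\mathrm{DE}}\equiv 0$ and there is nothing to prove, so assume $\lambda\neq 1$. The only possible singularities are then the zeros of the two factors of the denominator.

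\emph{Step 1: the zero sets.} The factor $1+\E^{\pi\sinh z}$ vanishes exactly when $\sinh z = \I(2k+1)$ for some $k\in\mathbb{Z}$. The factor $\lambda+\E^{-\pi\sinh z}$ vanishes exactly when $\E^{-\pi\sinh z} = -\lambda = \lambda\E^{\I\pi}$. This means $-\pi\sinh z = \log\lambda + \I(2k+1)\pi$, that is, $\sinh z = -a + \I(2k+1)$ with $a := (\log\lambda)/\pi$; the sign of $\I(2k+1)$ is immaterial since $k$ ranges over $\mathbb{Z}$. Both families are therefore contained in the single condition
\[
\sinh z = c + \I m, \qquad c\in\{0,-a\},\ m \text{ an odd integer}.
\]
So it suffices to show: if $z=x+\I y$ with $|y|<d_0(\lambda)$, then $\sinh z \ne c+\I m$ for every such $c$ and $m$. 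Note $|c|\le|a|$ in both cases.

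\emph{Step 2: reduce to a one-variable inequality.} Write
\[
\sinh(x+\I y)=\sinh x\cos y+\I\cosh x\sin y .
\]
First I check that $d_0(\lambda)\le\pi/2$. Define
\[
s_0 = \frac{(2+a^2)-\sqrt{(2+a^2)^2-4}}{2}.
\]
Multiplying numerator and denominator by $\pi^2$ shows that this is precisely the expression under the square root in~\eqref{eq:4-1}, so $d_0(\lambda)=\arcsin\sqrt{s_0}$. Since $s_0$ is the smaller root of $s^2-(2+a^2)s+1=0$, whose roots have product $1$, we get $0<s_0\le 1$.

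Hence $|y|<d_0$ gives $\cos y>0$, and I may set $s=\sin^2 y\in[0,s_0)$. Suppose $\sinh z=c+\I m$. The real part gives $\sinh x = c/\cos y$. Then $\cosh^2 x=1+c^2/(1-s)$, and therefore
\[
(\cosh x\sin y)^2 = s+\frac{c^2 s}{1-s} \le s + \frac{a^2 s}{1-s} =: g(s).
\]
The function $g$ is strictly increasing on $[0,1)$ with $g(0)=0$.

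\emph{Step 3: identify the threshold.} Solving $g(s)=1$ gives $s(1-s)+a^2s=1-s$, i.e.\ $s^2-(2+a^2)s+1=0$, whose root in $(0,1]$ is exactly $s_0$. (If $a=0$ then $s_0=1$, and $g(s)=s<1$ directly.) So for $s<s_0$ we have $(\cosh x\sin y)^2<1\le m^2$. This contradicts the imaginary part $\cosh x\sin y=m$, so no zero of the denominator lies in $\domD_{d_0(\lambda)}$, and $f_{\mathrm{DE}}(\cdot,\lambda)$ is analytic there.

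The main care point is Step 1: one must handle the branch bookkeeping correctly when solving $\E^{-\pi\sinh z}=-\lambda$, and confirm that the largest odd integers $|m|\ge3$ never give a closer singularity. The monotonicity argument in Step 2 handles all $m$ at once, because only $m^2\ge1$ is used.
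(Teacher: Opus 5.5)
Your proof is correct. The zero sets $\sinh z=\I m$ and $\sinh z=-a+\I m$ ($m$ odd) are identified correctly, and $s_0$ is exactly the quantity under the square root in~\eqref{eq:4-1}, lying in $(0,1]$. Since $\cos y>0$ and $|c|\le|a|$, a zero in the strip would force $m^2=(\cosh x\sin y)^2\le g(\sin^2y)<g(s_0)=1$, which fails for every odd $m$ at once.

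The paper gives no proof of its own here; the proposition is quoted from Tatsuoka et al. The route suggested by the shape of~\eqref{eq:4-1} is to invert $\sinh$ explicitly. One identifies the nearest singularities as preimages of $-a\pm\I$ and computes $|\Im z|=\arcsin\bigl(2/(|a|+\sqrt{a^2+4})\bigr)=d_0(\lambda)$. One then checks separately that the other branches of $\arsinh$, the points with $|m|\ge3$, and the family $c=0$ lie farther from the real axis.

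Your argument replaces that branch bookkeeping with a single monotonicity estimate on the image $\sinh(\mathbb{R}+\I y)$ of a horizontal line. It also gives sharpness for free. At $s=s_0$ the choice $c=-a$, $m=1$ attains $g(s_0)=1$, which for $\lambda\ne1$ gives a pole of $f_{\mathrm{DE}}(\cdot,\lambda)$ on the line $\Im z=d_0(\lambda)$; there $\cosh^2z=a^2-2a\I\ne0$ and $1+\E^{\pi\sinh z}=1-\E^{-\pi a}\ne0$.

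Your threshold equation also connects to the rest of the paper. The condition $g(s_0)=1$, i.e.\ $a^2s_0=(1-s_0)^2$, is exactly the identity $|\log\lambda|=\pi\cos^2d_0/\sin d_0$ used in the proof of Theorem~\ref{thm:DE_logA_error_simple}. It yields the simplified form in Remark~\ref{rem:d0_simpler} directly. Likewise, $g(\sin^2y)<1$ is equivalent to $T_{L,y}>0$ in Lemma~\ref{lem:bound-None}.

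Finally, excluding $\lambda=1$ at the start is unnecessary. Your $a=0$ case already shows that the denominator has no zeros in $\domD_{\pi/2}$.
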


For fixed $\delta\in(0,d_0(\lambda))$, the integrand $f_{\mathrm{DE}}(z,\lambda)$ decays
double-exponentially on the smaller strip
$\domD_{d_0(\lambda)-\delta}$, so its boundary integral ($\mathcal{N}(f_{\mathrm{DE}}(\cdot,\lambda),d_0(\lambda)-\delta)$) is finite. Hence, for sufficiently small $h$, Theorem~\ref{theo1} gives
\begin{align}
\left|
\int_{-\infty}^{\infty} f_{\mathrm{DE}}(x, \lambda)\D x
- h \sum_{k=-\infty}^{\infty} f_{\mathrm{DE}}(kh, \lambda)
\right|
= \OO\left( \exp\left(-\frac{2\pi (d_0(\lambda)-\delta)}{h}\right) \right).
\label{eq:4-23}
\end{align}

\begin{remark}\label{rem:strip_margin}
Tatsuoka et al.~\cite[Eq.~(6)]{Tatsu} state the sharper rate
$\OO(\E^{-2\pi d_0(\lambda)/h})$. The argument above establishes only
$\OO(\E^{-2\pi(d_0(\lambda)-\delta)/h})$ for each fixed $\delta>0$:
it gives no uniform bound on $\mathcal{N}(f_{\mathrm{DE}}(\cdot,\lambda),d_0(\lambda)-\delta)$ as $\delta\to0$. Analyticity
in $\domD_{d_0(\lambda)}$ alone does not ensure that the boundary
integral there is finite, so the sharper rate requires a separate estimate.

%
To derive the subsequent matrix error estimates, fix $q\in(0,1)$ independently
of $A$ and $h$. Setting $\delta=(1-q)d_0(\lambda)$
in~\eqref{eq:4-23} yields
\[
\left|
\int_{-\infty}^{\infty}f_{\mathrm{DE}}(x,\lambda)\D x
-h\sum_{k=-\infty}^{\infty}f_{\mathrm{DE}}(kh,\lambda)
\right|
=\OO\left(\exp\left(-\frac{2\pi q d_0(\lambda)}{h}\right)\right).
\]
\end{remark}

\begin{remark}\label{rem:d0_simpler}
Note that $d_0(\lambda)$ defined by~\eqref{eq:4-1} can be rewritten in the following simpler form:
\begin{align}
d_0(\lambda)
= \begin{cases}
\arccos\left(
\sqrt{
\dfrac{2}{1 + \sqrt{1 + \left(2\pi/\log\lambda\right)^2}}
}
\right) & (\lambda \neq 1),\\
\dfrac{\pi}{2} & (\lambda = 1).
\end{cases}
\label{eq:d0-simple}
\end{align}
\end{remark}

\subsection{Overall error estimate and the effect of the condition number on convergence}
\label{subsec:total_error}

We review Tatsuoka et al.'s convergence analysis~\cite{Tatsu}, retaining
the strip-width qualification in Remark~\ref{rem:strip_margin}. By
Eq.~\eqref{eq:d0-simple}, $d_0(\lambda)=d_0(1/\lambda)$ and decreases
as $|\log\lambda|$ increases; thus eigenvalues farther from $1$ give
slower exponential decay.

By Eq.~\eqref{eq:reduction-to-scalar}, the matrix discretization error
is the maximum scalar error over the eigenvalues. Positive scalar
scaling can improve this rate without changing the condition number:
let $\widehat{A}$ be given, set $A=p\widehat{A}$ for $p>0$, and denote
the extreme eigenvalues of $\widehat{A}$ and $A$ by
$\widehat{\lambda}_{\min},\widehat{\lambda}_{\max}$ and
$\lambda_{\min}=p\widehat{\lambda}_{\min},\lambda_{\max}=p\widehat{\lambda}_{\max}$,
respectively. Then $\kappa(A)=\kappa(\widehat{A})$, where $\kappa(B)=\|B\|_2\|B^{-1}\|_2$
denotes the spectral condition number of a nonsingular matrix $B$.
The monotonicity of $d_0$ gives
\[
\min_{\lambda\in\Lambda(A)}d_0(\lambda)
=\min\{d_0(\lambda_{\min}),d_0(\lambda_{\max})\}.
\]
Since $d_0$ decreases with $|\log\lambda|$, maximizing this minimum
over $p$ is equivalent to minimizing
\[
\max\{|\log(p\widehat{\lambda}_{\min})|,
|\log(p\widehat{\lambda}_{\max})|\}.
\]
The minimum occurs when $\lambda_{\min}\lambda_{\max}=1$, i.e.,
$p=1/\sqrt{\widehat{\lambda}_{\min}\widehat{\lambda}_{\max}}$.
Henceforth $A=p\widehat{A}$ denotes this normalization; it equalizes the
endpoint strip widths and maximizes the minimum over $\lambda\in\Lambda(A)$.
To preserve Remark~\ref{rem:strip_margin}, fix $q\in(0,1)$ independently
of $A,h$ and use the common strip $\domD_{q d_0(\lambda_{\max})}$, which
lies strictly inside every eigenvalue's analyticity strip. Applying
Eq.~\eqref{eq:4-23} to each $\lambda\in\Lambda(A)$ gives, for fixed
$A$ and $q$,
\begin{equation}
\max_{\lambda \in \Lambda(A)}
\left|
\int_{-\infty}^{\infty} f_{\mathrm{DE}}(x,\lambda) \D x
- h \sum_{k=-\infty}^{\infty} f_{\mathrm{DE}}(kh,\lambda)
\right|
=
\OO\left(\exp\left(-\frac{2\pi q d_0(\lambda_{\max})}{h}\right)\right)
=
\OO\left(\exp\left(-\frac{2\pi q d_0(\sqrt{\kappa(A)})}{h}\right)\right),
\nonumber
\end{equation}
as $h\to0$.
Apply the quadrature to $A=p\widehat{A}$ and recover
$\log(\widehat{A})=\log(A)-(\log p)I$.

Furthermore, let $A\ne I$ be Hermitian positive definite with
$\lambda_{\min}\lambda_{\max}=1$, and fix a finite interval $[l,r]$
such that the norm of the sum of the two tail integrals is
at most $\theta\varepsilon$. This condition is ensured by
Proposition~\ref{prop:finite_interval}; it is also retained by the
revised endpoint construction in Remark~\ref{rem:revised-s}.
With $M\ge2$ quadrature nodes and $h=(r-l)/(M-1)$, the following
estimate holds for sufficiently small $h$:
\begin{align*}
&\left\|
\log(A)
-
h \sum_{k=1}^{M-2} F_{\mathrm{DE}}(l+kh)
-\frac{h}{2}\left\{F_{\mathrm{DE}}(l)+F_{\mathrm{DE}}(r)\right\}
\right\|_2\\
&\leq
\left\|
\int_{-\infty}^{\infty} F_{\mathrm{DE}}(x)\D x
- h \sum_{k=-\infty}^{\infty} F_{\mathrm{DE}}(l+kh)
\right\|_2
+
\left\|
h \sum_{k=-\infty}^{-1} F_{\mathrm{DE}}(l+kh)
+h \sum_{k=1}^{\infty} F_{\mathrm{DE}}(r+kh)
+\frac{h}{2}\left\{F_{\mathrm{DE}}(l)+F_{\mathrm{DE}}(r)\right\}
\right\|_2\\
&\leq
\left\|
\int_{-\infty}^{\infty} F_{\mathrm{DE}}(l+x)\D x
- h \sum_{k=-\infty}^{\infty} F_{\mathrm{DE}}(l+kh)
\right\|_2
+2
\left\|
\int_{-\infty}^{l}F_{\mathrm{DE}}(x)\D x
+\int_{r}^{\infty}F_{\mathrm{DE}}(x)\D x
\right\|_2\\
&\leq\OO\left(
\exp\left(-\frac{2\pi q d_0(\sqrt{\kappa(A)})}{r-l}(M-1)\right)
\right)
+2\theta\varepsilon.
\end{align*}
In the second inequality, the estimate $\|T_h\|_2\le2\|T\|_2$
from Lemma~\ref{lem:tail_trapezoidal_bound} is used with $0<h<h_0$.
The discretization estimate also applies to the shifted grid $l+kh$,
because a real translation preserves the analytic strip and the boundary
integral in Theorem~\ref{theo1}.
The parameter $q$ is fixed as above.

Consequently, incorporating the qualifications discussed above, the overall
error bound for the method of Tatsuoka et al.\ can be stated as follows.
\begin{theorem}
\label{thm:Tatsuoka-et-al}
Let $A \ne I$ be a Hermitian positive definite matrix, with maximum eigenvalue $\lambda_{\max}$ and minimum eigenvalue $\lambda_{\min}$ satisfying
$\lambda_{\max}\lambda_{\min}=1$.
Choose a positive lower bound $\theta\in(0,\|\log(A)\|_2]$ and a truncation
error tolerance $\varepsilon\in(0,3\|A-I\|_2/(2\theta))$,
and define $\tilde{s}$ by~\eqref{eq:revise-s}.
Define $a$ and $b$ by~\eqref{eq:finite_ab}, with $s$ replaced by
$\tilde{s}$ in the definition of $b$, and define $l$ and $r$
by~\eqref{eq:finite_lr}.
Let $d_0(\lambda)$ be defined by~\eqref{eq:d0-simple}.
For each integer $M\ge2$, let $h=(r-l)/(M-1)$.
Fix $q\in(0,1)$, independently of $A$ and $M$.
Then there exist constants $C>0$ and $M_0\ge2$, independent of $M$,
such that the following inequality holds for all integers $M\ge M_0$:
\begin{align*}
\left\|\log(A) - h\left\{\frac{1}{2}F_{\mathrm{DE}}(l) + \sum_{k=1}^{M-2} F_{\mathrm{DE}}(l+kh) + \frac{1}{2}F_{\mathrm{DE}}(r)\right\}\right\|_{2}
\leq
C
\exp\left(-\frac{2\pi q d_0(\sqrt{\kappa(A)})(M-1)}{r-l}\right)
+2\theta\varepsilon.
\end{align*}
\end{theorem}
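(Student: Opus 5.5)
The plan is to formalize the chain of inequalities displayed just before the statement. Write $T_{[l,r]}(h)$ for the trapezoidal sum in the statement. First check that the endpoints are admissible. Because $\lambda_{\min}\lambda_{\max}=1$ and $A\ne I$, we have $\lambda_{\max}>1$, and $\|\log(A)\|_2=\log\lambda_{\max}>0$. The constraint on $\varepsilon$ should make $a\in(0,1/2]$, so that $\artanh(2a-1)$, and hence $l$, is finite. For $b$, note $\tilde{s}<1$ and $2\|A^{-1}\|_2/(2\|A^{-1}\|_2+1)<1$, so $b\in[2/3,1)$ and $r$ is finite; moreover $l<0<r$. By Remark~\ref{rem:revised-s}, inequality~\eqref{eq:finite_half} holds with this $b$. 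The proof of Proposition~\ref{prop:finite_interval} then gives
\[
\left\|\int_{-\infty}^{l}F_{\mathrm{DE}}\D x+\int_r^{\infty}F_{\mathrm{DE}}\D x\right\|_2\le\theta\varepsilon .
\]
Here I rely on that proof using only~\eqref{eq:finite_half} together with the bound on $a$; I would re-read it to confirm this.

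Next, decompose the error as in the display. Insert the bi-infinite shifted sum $h\sum_{k\in\mathbb{Z}}F_{\mathrm{DE}}(l+kh)$. Since $r=l+(M-1)h$, the nodes $r+kh$ for $k\ge1$ are exactly the shifted-grid nodes beyond $r$. So $\log(A)-T_{[l,r]}(h)$ equals the full-line discretization error on the shifted grid plus the tail sums $h\sum_{k<0}F_{\mathrm{DE}}(l+kh)+h\sum_{k>0}F_{\mathrm{DE}}(r+kh)$ plus the two half endpoint terms.

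For the discretization part, use unitary diagonalization as in~\eqref{eq:reduction-to-scalar}, now for $x\mapsto f_{\mathrm{DE}}(l+x,\lambda)$. Every eigenvalue satisfies $|\log\lambda|\le\log\lambda_{\max}$, so $d_0(\lambda)\ge d_0(\lambda_{\max})=d_0(\sqrt{\kappa(A)})$, using $\kappa(A)=\lambda_{\max}^2$. Set $d=q\,d_0(\lambda_{\max})$. Each $f_{\mathrm{DE}}(l+\cdot,\lambda)$ lies in $\mathbf{B}(\domD_d)$, because translation preserves the strip and $d<d_0(\lambda)$. Theorem~\ref{theo1} then bounds the discretization error by $\mathcal{N}\,\E^{-2\pi d/h}/(1-\E^{-2\pi d/h})$.

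The key point, and the main obstacle, is that $\mathcal{N}(f_{\mathrm{DE}}(l+\cdot,\lambda),d)$ must be bounded uniformly over $\lambda\in\Lambda(A)$. This is easy because there are only finitely many eigenvalues, so we take the maximum. The constant is also independent of $M$, because $l$ does not depend on $M$ and $\mathcal{N}$ is translation invariant. The finiteness of each $\mathcal{N}$ comes from the double-exponential decay on the strictly smaller strip. I would verify this by bounding $|\lambda+\E^{-\pi\sinh z}|$ away from zero on $\overline{\domD_d}$ for large $|\Re z|$, which holds since $d<d_0(\lambda)$ keeps the denominator nonvanishing. For $M\ge M_0$ with $h$ small, the denominator $1-\E^{-2\pi d/h}\ge1/2$, giving the first term with $C$ twice the max of the $\mathcal{N}$.

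For the remaining terms, rewrite the tail-plus-half-endpoint combination as the trapezoidal approximations $T_h$ of the left and right tail integrals. Lemma~\ref{lem:tail_trapezoidal_bound} gives $\|T_h\|_2\le2\|T\|_2$ for $0<h<h_0$, with $T$ the combined tail integral. We enlarge $M_0$ so that $(r-l)/(M_0-1)<h_0$. Then this contribution is at most $2\theta\varepsilon$. Adding it to the discretization term yields the stated inequality with $h=(r-l)/(M-1)$ in the exponent.
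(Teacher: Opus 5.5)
Your proposal is correct and follows the paper's own argument essentially step for step: you insert the shifted bi-infinite sum, bound the discretization error by Theorem~\ref{theo1} on the common strip $\domD_{q d_0(\lambda_{\max})}$ (using translation invariance of the strip and boundary integral, and a maximum over finitely many eigenvalues), and bound the tail-plus-half-endpoint terms by $2\|T\|_2\le 2\theta\varepsilon$ via Lemma~\ref{lem:tail_trapezoidal_bound} and Remark~\ref{rem:revised-s}. The one check you leave implicit is that the theorem's range for $\varepsilon$ implies the hypothesis of Proposition~\ref{prop:finite_interval}; this holds because $\|A^{-1}\|_2=\lambda_{\max}>1$ gives $\|A^{-1}\|_2/(1+\|A^{-1}\|_2)>1/2$.
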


For normalized Hermitian positive definite matrices as above,
Tatsuoka et al.~\cite{Tatsu} compare the theoretical convergence rates
of the DE formula and GL quadrature in terms of the number of quadrature nodes.
They report that GL quadrature converges faster when
$\kappa(A)\lesssim 2.7\times10^3$, whereas the DE formula converges faster
when $\kappa(A)\gtrsim 2.7\times10^3$.

\subsection{Adaptive implementation of the DE formula with reuse of previous evaluations}
\label{sec:adaptive-Tatsuoka}

Tatsuoka et al.~\cite[Algorithm~2]{TatsuEn} halve the mesh on a fixed
interval, reuse previous evaluations, and stop according to successive
approximations. Thus, adaptivity selects the number of global refinement
steps rather than locally refining parts of the interval. For consistency
with the transformation convention adopted here, we express their update
formula and stopping criterion using $t=1/(1+\E^{-\pi\sinh x})$.

To retain the placement of the factor $A-I$ in their stopping criterion,
introduce the auxiliary integrand
\[
G_{\mathrm{DE}}(x)
=\frac{\pi\cosh x}{2\cosh^2((\pi/2)\sinh x)}
\left[\left\{1+\tanh\left(\frac{\pi}{2}\sinh x\right)\right\}(A-I)
+2I\right]^{-1}.
\]
Thus, $F_{\mathrm{DE}}(x)=(A-I)G_{\mathrm{DE}}(x)$ and
$\log(A)=(A-I)\int_{-\infty}^{\infty}G_{\mathrm{DE}}(x)\D x$.
Fix the interval $[l,r]$ using the truncation tolerance $\varepsilon$.
For a positive integer $n=M-1$, define
\begin{equation}
\widehat S_n
= h\left\{
\frac{1}{2}G_{\mathrm{DE}}(l)+\frac{1}{2}G_{\mathrm{DE}}(r)
+ \sum_{k=1}^{n-1} G_{\mathrm{DE}}(l+kh)
\right\},
\qquad h=\frac{r-l}{n}.
\label{eq:Tatsuoka-trapezoidal-sum}
\end{equation}
When $n$ is doubled, the mesh size becomes $h/2$.
The even-indexed nodes of the refined grid coincide with the nodes of
the original grid, while the odd-indexed nodes are the new midpoints.
Separating these contributions gives
\begin{equation}
\widehat S_{2n}
=
\frac{1}{2}\widehat S_n
+
\frac{h}{2}\sum_{j=1}^{n} G_{\mathrm{DE}}\left(l+(2j-1)\frac{h}{2}\right).
\label{eq:adaptive_update}
\end{equation}
Thus, only $n$ new midpoint evaluations are needed; the node count grows
from $M=n+1$ to $2M-1=2n+1$.

For the stopping criterion, Tatsuoka et al.~\cite[Eq.~(22)]{TatsuEn}
use an approximate error estimate for the trapezoidal rule based on
successive mesh sizes. Applied to $G_{\mathrm{DE}}$, this estimate takes
the form
\[
\left\|\int_l^r G_{\mathrm{DE}}(x)\D x-\widehat S_{2n}\right\|_2
\approx\frac13\|\widehat S_{2n}-\widehat S_n\|_2
\]
for sufficiently small $h$ when the error of the trapezoidal rule is
approximately proportional to $h^2$; halving $h$ then reduces the error
by about a factor of four, yielding the factor $1/3$.
Following their stopping criterion, define the normalized error indicator
\begin{equation*}
\mathrm{err}_n:=\frac{\|\widehat S_{2n}-\widehat S_n\|_2}{3\theta},
\qquad 0<\theta\le\|\log(A)\|_2.
\end{equation*}
In particular, the factor $A-I$ is not included in this indicator;
it is applied to the quadrature result only when forming the output.
Let $\tau>0$ be the tolerance for this indicator, distinct from the
interval truncation tolerance $\varepsilon$.
Algorithm~\ref{alg:adaptive-Tatsuoka} includes the preprocessing
described by Tatsuoka et al.~\cite[Algorithm~2]{TatsuEn} to compute
$\theta$, $l$, and $r$.
Here, $\rho(A)$ denotes the spectral radius of $A$, and the initial
number of subintervals $n_0$ corresponds to $m_0-1$ in their notation.
As in Theorem~\ref{thm:Tatsuoka-et-al}, $s$ in~\eqref{eq:finite_ab}
is replaced by $\tilde{s}$ defined in~\eqref{eq:revise-s}.
When applying Theorem~\ref{thm:Tatsuoka-et-al}, its
assumptions on $A$ and the additional restriction
$\varepsilon<3\|A-I\|_2/(2\theta)$ must also be satisfied.
The choice $\theta=|\log(\rho(A))|$ requires $\rho(A)\ne1$ for the
normalization to be defined; if $\rho(A)=1$, a different positive
lower bound for $\|\log(A)\|_2$ is needed when $A\ne I$, while
$A=I$ can be handled by returning the zero matrix.

\begin{algorithm}[tbp]
\caption{Adaptive algorithm of Tatsuoka et al.\ based on the DE formula with a fixed truncation interval}
\label{alg:adaptive-Tatsuoka}
\begin{algorithmic}[1]
\Require Matrix $A$ with no eigenvalues on $(-\infty,0]$ and $\rho(A)\ne1$.
\Require Positive integers $n_0$ and $M_{\max}\geq n_0+1$;
truncation tolerance $\varepsilon>0$; indicator tolerance $\tau>0$.
\Ensure Approximation to $\log(A)$ and a termination status.
\State Compute $\mu\gets\|A-I\|_2$, $\nu\gets\|A^{-1}\|_2$, and $\rho(A)$.
\State $\theta\gets|\log(\rho(A))|$, $\varepsilon_{\max}\gets3\mu\nu/(\theta(1+\nu))$
\If{$\varepsilon\geq\varepsilon_{\max}$}
  \State $\varepsilon\gets\varepsilon_{\max}/2$
\EndIf
\State $a\gets\min\{\theta\varepsilon/(3\mu),1/(2\mu)\}$
\State $\tilde{s}\gets1-\theta\varepsilon/(4\mu\nu)$
\State $b\gets\max\{\tilde{s},2\nu/(2\nu+1)\}$
\State $l\gets\arsinh((2/\pi)\artanh(2a-1))$
\State $r\gets\arsinh((2/\pi)\artanh(2b-1))$
\State $n\gets n_0$, $h\gets(r-l)/n$
\State Compute $\widehat S_n$ using~\eqref{eq:Tatsuoka-trapezoidal-sum}.
\While{$2n+1\leq M_{\max}$}
  \State Compute $\widehat S_{2n}$ using~\eqref{eq:adaptive_update}.
  \State $\mathrm{err}_n\gets\|\widehat S_{2n}-\widehat S_n\|_2/(3\theta)$
  \If{$\mathrm{err}_n\leq\tau$}
    \State \Return $((A-I)\widehat S_{2n},\text{indicator satisfied})$
  \EndIf
  \State $\widehat S_n\gets\widehat S_{2n}$, $n\gets2n$, $h\gets h/2$
\EndWhile
\State \Return $((A-I)\widehat S_n,\text{refinement-limit warning})$
\end{algorithmic}
\end{algorithm}
This stopping test uses an approximate error indicator, not a rigorous
upper bound, and therefore does not by itself guarantee the requested
accuracy. It estimates the quadrature error for the integral of
$G_{\mathrm{DE}}$, before multiplication by $A-I$, rather than the
error in the resulting approximation to $\log(A)$ directly.
Moreover, the difference $\widehat S_{2n}-\widehat S_n$ does not measure the
interval truncation error, since both approximations use the same fixed
interval $[l,r]$. That error must be controlled separately through the
choice of $\varepsilon$ and the endpoints.

\section{Algorithms for computing the matrix logarithm proposed in this study}
\label{sec:our-algorithm}

\subsection{Idea of the approximation}
In this study, we consider computing the integral shown in~\eqref{eq:logA-after-DEtrans} via the following two-step approximation:
\begin{align*}
\int_{-\infty}^{\infty} F_{\mathrm{DE}}(x)\D x
\approx h \sum_{k=-\infty}^{\infty} F_{\mathrm{DE}}(kh)
\approx h \sum_{k=-N}^{N} F_{\mathrm{DE}}(kh).
\end{align*}
The first approximation applies the composite trapezoidal rule to the
integral over the infinite interval $(-\infty,\infty)$.
The second approximation truncates the infinite sum.
In the same manner as Tatsuoka et al., we first investigate the
truncation error in Section~\ref{sec:evaluate-truncation}, followed by
the discretization error in Section~\ref{sec:evaluate-discretize}.
Combining these results, we state the overall error estimate in
Section~\ref{sec:evaluate-total} and extend it to diagonalizable matrices
with positive real eigenvalues in Section~\ref{sec:evaluate-total-diagonalizable}.
Finally, Section~\ref{sec:adaptive-new} presents practical algorithms
based on the preceding analysis.

\subsection{Evaluation of the truncation error of the infinite trapezoidal sum}
\label{sec:evaluate-truncation}
When $A$ is Hermitian positive definite,
by the same argument used to derive~\eqref{eq:reduction-to-scalar},
the truncation error satisfies
\begin{align}
\left\|
h\sum_{k=-\infty}^{-N-1} F_{\mathrm{DE}}(kh)
+ h\sum_{k=N+1}^{\infty}   F_{\mathrm{DE}}(kh)
\right\|_2
=
\max_{\lambda \in \Lambda(A)}
\left|
h\sum_{k=-\infty}^{-N-1} f_{\mathrm{DE}}(kh,\lambda)
+ h\sum_{k=N+1}^{\infty} f_{\mathrm{DE}}(kh,\lambda)
\right|,
\label{eq:4-24}
\end{align}
where $f_{\mathrm{DE}}(x,\lambda)$ is defined by~\eqref{eq:scalar-fDE}.
Using this reduction, we obtain the following result.
\begin{lemma}\label{lemma:FDE_tail_bound}
Let $A$ be a Hermitian positive definite matrix,
let $h>0$, and let $N$ be a nonnegative integer.
Then,
\begin{align*}
\left\| h\sum_{k=-\infty}^{-N-1} F_{\mathrm{DE}}(kh)
+ h\sum_{k=N+1}^{\infty} F_{\mathrm{DE}}(kh) \right\|_2
\le \left\| A - A^{-1} \right\|_2  \E^{-\pi \sinh(Nh)}.
\end{align*}
\end{lemma}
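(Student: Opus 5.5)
The plan is to use the reduction~\eqref{eq:4-24} and prove, for every eigenvalue $\lambda>0$, the scalar bound
\[
\left|h\sum_{k=-\infty}^{-N-1} f_{\mathrm{DE}}(kh,\lambda)+h\sum_{k=N+1}^{\infty} f_{\mathrm{DE}}(kh,\lambda)\right|
\le \left|\lambda-\frac{1}{\lambda}\right|\E^{-\pi\sinh(Nh)} .
\]
Since $A$ is Hermitian positive definite, $A-A^{-1}$ is Hermitian with eigenvalues $\lambda-1/\lambda$, $\lambda\in\Lambda(A)$. Hence $\max_{\lambda\in\Lambda(A)}|\lambda-1/\lambda|=\|A-A^{-1}\|_2$, and the lemma follows from the scalar bound.

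Write $u=\pi\sinh x$. For $\lambda>0$ every factor in the denominator of~\eqref{eq:scalar-fDE} is positive, so $f_{\mathrm{DE}}(x,\lambda)$ has the sign of $\lambda-1$ for all real $x$. Consequently the two tail sums have the same sign, and the modulus of their sum equals the sum of their moduli. This lets me treat the two tails separately, with no cancellation issues.

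For the right tail ($x>0$), I use $\lambda+\E^{-u}\ge\lambda$ and $1+\E^{u}\ge\E^{u}$. This gives
\[
|f_{\mathrm{DE}}(x,\lambda)|\le \frac{|\lambda-1|}{\lambda}\,g(x),
\qquad g(x)=\pi\cosh x\,\E^{-\pi\sinh x}.
\]
For the left tail I substitute $x\mapsto -x$, which gives
\[
f_{\mathrm{DE}}(-x,\lambda)=\frac{(\lambda-1)\pi\cosh x}{(\lambda+\E^{u})(1+\E^{-u})}.
\]
Using $\lambda+\E^{u}\ge\E^{u}$ and $1+\E^{-u}\ge1$, this yields $|f_{\mathrm{DE}}(-x,\lambda)|\le|\lambda-1|\,g(x)$.

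It then remains to bound $h\sum_{k=N+1}^{\infty}g(kh)$. Its derivative is
\[
g'(x)=\pi\E^{-\pi\sinh x}\bigl(\sinh x-\pi\cosh^2 x\bigr),
\]
which is negative since $\sinh x<\cosh^2 x\le\pi\cosh^2 x$. So $g$ is decreasing on $[0,\infty)$. Each term $h\,g(kh)$ is therefore at most $\int_{(k-1)h}^{kh}g(x)\D x$, and summing gives
\[
h\sum_{k=N+1}^{\infty}g(kh)\le\int_{Nh}^{\infty}g(x)\D x=\E^{-\pi\sinh(Nh)}.
\]
Adding the two tails gives the bound
\[
|\lambda-1|\left(\frac{1}{\lambda}+1\right)\E^{-\pi\sinh(Nh)}
=\frac{|\lambda-1|(\lambda+1)}{\lambda}\E^{-\pi\sinh(Nh)}
=\left|\lambda-\frac{1}{\lambda}\right|\E^{-\pi\sinh(Nh)},
\]
which is exactly the scalar claim.

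The only delicate points are (i) choosing, in each tail, which of the two denominator factors to keep, so that the constants $1/\lambda$ and $1$ appear and combine into $\lambda-1/\lambda$; and (ii) the monotonicity of $g$, which justifies comparing the sum with the integral. Both are elementary, so I do not expect a serious obstacle. The case $\lambda=1$ is trivial, since then $f_{\mathrm{DE}}\equiv0$.
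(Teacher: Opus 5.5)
Your proposal is correct and follows essentially the same route as the paper: reduce to eigenvalues, bound the left- and right-tail integrands by $|\lambda-1|$ and $|\lambda-1|/\lambda$ times $\pi\cosh x\,\E^{\mp\pi\sinh x}$, and compare each monotone sum with its integral, which equals $\E^{-\pi\sinh(Nh)}$. The differences are cosmetic. You reflect the left tail onto the same decreasing $g$, whereas the paper works directly with the increasing $\pi\cosh x\,\E^{\pi\sinh x}$ on the negative axis. You also verify the monotonicity by differentiation, and you use the constant-sign observation where the triangle inequality would already suffice.
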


\begin{proof}
For the negative tail on the right-hand side of Eq.~\eqref{eq:4-24}, we have
\begin{align*}
  \left| h\sum_{k=-\infty}^{-N-1} f_{\mathrm{DE}}(kh,\lambda) \right|
  &\le h\sum_{k=-\infty}^{-N-1}
       \frac{|\lambda-1|}{\lambda + \E^{-\pi\sinh(kh)}} \cdot \frac{\pi\cosh(kh)}{1 + \E^{\pi\sinh(kh)}} \\
  &= h\sum_{k=-\infty}^{-N-1}
       \frac{|\lambda-1|}{\lambda \E^{\pi\sinh(kh)} + 1} \cdot \frac{\pi\cosh(kh)}{1 + \E^{\pi\sinh(kh)}} \E^{\pi\sinh(kh)} \\
  &\le |\lambda-1|h\sum_{k=-\infty}^{-N-1}
       \pi\cosh(kh)\E^{\pi\sinh(kh)} \\
  &\le |\lambda-1| \int_{-\infty}^{-Nh} (\pi\cosh x) \E^{\pi\sinh x} \D x \\
  &= |\lambda-1| \E^{-\pi\sinh(Nh)}.
\end{align*}
The last inequality uses the fact that
$(\pi\cosh x)\E^{\pi\sinh x}$ is increasing on $\mathbb{R}$.
Similarly, for the positive tail, we have
\begin{align*}
  \left| h\sum_{k=N+1}^{\infty} f_{\mathrm{DE}}(kh,\lambda) \right|
  &\le h\sum_{k=N+1}^{\infty}
       \frac{|\lambda-1|}{\lambda + \E^{-\pi\sinh(kh)}} \cdot \frac{\pi\cosh(kh)}{1 + \E^{\pi\sinh(kh)}} \\
  &= h\sum_{k=N+1}^{\infty}
       \frac{|\lambda-1|}{\lambda + \E^{-\pi\sinh(kh)}} \cdot \frac{\pi\cosh(kh)}{ \E^{-\pi\sinh(kh)} + 1} \E^{-\pi\sinh(kh)} \\
  &\le \frac{|\lambda-1|}{\lambda}h\sum_{k=N+1}^{\infty}
       \pi\cosh(kh)\E^{-\pi\sinh(kh)} \\
  &\le  \frac{|\lambda-1|}{\lambda} \int_{Nh}^{\infty} (\pi\cosh x) \E^{-\pi\sinh x} \D x \\
  &=  \frac{|\lambda-1|}{\lambda} \E^{-\pi\sinh(Nh)}.
\end{align*}
Here, the last inequality uses the fact that
$(\pi\cosh x)\E^{-\pi\sinh x}$ is decreasing on $\mathbb{R}$.
Therefore,
\begin{align*}
\left\| h\sum_{k=-\infty}^{-N-1} F_{\mathrm{DE}}(kh)
      + h\sum_{k=N+1}^{\infty} F_{\mathrm{DE}}(kh) \right\|_2
&\le \max_{\lambda \in \Lambda(A)}
      \left\{ |\lambda - 1| + \frac{|\lambda - 1|}{\lambda} \right\}
      \E^{-\pi \sinh(Nh)} \\
&= \max_{\lambda \in \Lambda(A)}
      \frac{|\lambda - 1|(\lambda + 1)}{\lambda}
      \E^{-\pi \sinh(Nh)} \\
&= \max_{\lambda \in \Lambda(A)}
      \left| \frac{(\lambda - 1)(\lambda + 1)}{\lambda} \right|
      \E^{-\pi \sinh(Nh)} \\
&= \max_{\lambda \in \Lambda(A)}
      \left| \lambda - \frac{1}{\lambda} \right|
      \E^{-\pi \sinh(Nh)},
\end{align*}
which completes the proof of the lemma.
\end{proof}

\subsection{Evaluation of the discretization error}
\label{sec:evaluate-discretize}

To evaluate the discretization error,
the following lemma is essential.
The proof is deferred to Section~\ref{sec:proof-essential-lemma}.

\begin{lemma}
\label{lem:bound-None}
Let $\lambda$ be a positive real number,
let $L = |\log\lambda|$, and
let $d_0(\lambda)$ be defined by~\eqref{eq:d0-simple}.
For each $y\in(0,d_0(\lambda))$, define
\begin{align}
 x_{L,y}&=\arsinh\left(\frac{L}{\pi\cos y}\right),
 \label{eq:x-L-y}\\
 T_{L,y}&=\frac{\pi\cos^2y-L\sin y}{2\sin y},
 \label{eq:T-L-y}\\
 \tau_{L,y}&=\frac{T_{L,y}}{1+\sqrt{T_{L,y}}},
 \label{eq:tau-L-y}\\
 a_{L,y}&=\frac{\pi}{2}\sin y
 \sqrt{1+\left(
 \sinh x_{L,y}+\frac{2\tau_{L,y}}{\pi\cos y}
 \right)^2}.
 \label{eq:a-L-y}
\end{align}
Then, for every $d\in(0,d_0(\lambda))$,
$f_{\mathrm{DE}}(\cdot,\lambda)\in\mathbf{B}(\domD_d)$.
In particular,
$\mathcal{N}(f_{\mathrm{DE}}(\cdot,\lambda),d)$ is bounded by
\begin{equation*}
\mathcal{N}(f_{\mathrm{DE}}(\cdot,\lambda),d)
\leq \frac{2L}{\gamma_d},
\end{equation*}
where $\gamma_y$ is defined by
\begin{equation}
\gamma_y = \cos\left(\frac{\pi}{2}\sin y\right)
\cos(a_{L,y})\cos y.
\label{eq:gamma_y}
\end{equation}
\end{lemma}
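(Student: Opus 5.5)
The plan is to bound $|f_{\mathrm{DE}}(x\pm \I y,\lambda)|$ pointwise for fixed $y\in(0,d)$, integrate in $x$, and then let $y\to d$. First rewrite the scalar integrand in a symmetric form. Put $w=(\pi/2)\sinh\zeta$. Then
\[
f_{\mathrm{DE}}(\zeta,\lambda)=\frac{(\lambda-1)\pi\cosh\zeta}{(\lambda\E^{w}+\E^{-w})(\E^{-w}+\E^{w})},
\]
and $\lambda\E^{w}+\E^{-w}=2\sqrt{\lambda}\cosh(w+\tfrac12\log\lambda)$. Hence
\[
f_{\mathrm{DE}}(\zeta,\lambda)=\frac{(\lambda-1)\pi\cosh\zeta}{4\sqrt\lambda\,\cosh(w)\cosh(w+\tfrac12\log\lambda)},\qquad \frac{|\lambda-1|}{\sqrt\lambda}=2\sinh(L/2).
\]
By the symmetry $\lambda\mapsto1/\lambda$ combined with $\zeta\mapsto-\zeta$, we may assume $\lambda\ge1$. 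Conjugation gives $|f(x-\I y)|=|f(x+\I y)|$, so it suffices to bound $2\int_{\mathbb R}|f(x+\I y)|\D x$. Analyticity in $\domD_{d_0}$ is already known from Proposition~\ref{prop:analytic_strip}. The decay condition along vertical segments follows from the double-exponential growth of $|\cosh w|$ as $|x|\to\infty$, which the estimates below make explicit.

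Next, with $\zeta=x+\I y$, we have $\Re w=(\pi/2)\sinh x\cos y$ and $\Im w=(\pi/2)\cosh x\sin y$. We use $|\cosh(u+\I v)|^2=\sinh^2u+\cos^2v$ and, once $|v|<\pi/2$, the lower bound $|\cosh(u+\I v)|\ge\cosh u\cos v$. The trouble is that $\Im w$ grows with $\cosh x$, so this cosine bound is only usable on a bounded $x$-range. This is exactly where $x_{L,y}$, $T_{L,y}$, $\tau_{L,y}$ and $a_{L,y}$ enter.

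The centre of the second factor is the point where $\Re w=-L/2$, that is $\sinh x=-L/(\pi\cos y)$, or $x=-x_{L,y}$. I would split the real line into a central window around the relevant points, namely $|\sinh x+\text{shift}|\le 2\tau_{L,y}/(\pi\cos y)$, and the two outer regions.
\begin{itemize}
\item In the central window, $|\Im w|\le a_{L,y}$; this is the definition of $a_{L,y}$, with $\cosh x=\sqrt{1+\sinh^2x}$. One needs $a_{L,y}<\pi/2$, and I expect this to follow from $y<d_0(\lambda)$ via the equation defining $d_0$. There the two cosh factors are bounded below by $\cos(a_{L,y})\cosh(\cdot)$ and $\cos((\pi/2)\sin y)\cosh(\cdot)$ respectively.
\item In the outer regions, $|\Re w|$ has grown by at least $\tau_{L,y}$ beyond its window value. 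The quantity $T_{L,y}$, roughly $(\pi\cos^2 y-L\sin y)/(2\sin y)$, measures how fast $\Re w$ outruns $\Im w$ as $|x|$ increases. Positivity of $T_{L,y}$ is again equivalent to $y<d_0$. Using $|\cosh(u+\I v)|\ge|\sinh u|$ and comparing $|\sinh u|$ with $\cosh u\cdot(\text{const})$, one gets the same factors $\cos(\cdot)$ as lower-bound constants.
\end{itemize}

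With these pointwise bounds, $|f|\le \dfrac{\sinh(L/2)\,\pi\cosh x}{2\gamma_y\cosh(\Re w)\cosh(\Re w+L/2)}$ (using $|\cosh\zeta|\le\cosh x$), and the extra factor $\cos y$ in $\gamma_y$ absorbs the Jacobian. Substituting $u=\Re w$, so that $\D u=(\pi/2)\cos y\cosh x\D x$, gives
\[
\int|f|\D x\le\frac{2\sinh(L/2)}{\gamma_y}\int_{\mathbb R}\frac{\D u}{2\cosh u\cosh(u+L/2)}\cdot\frac{1}{\ldots}.
\]
The identity
\[
\int_{\mathbb R}\frac{\D u}{\cosh u\cosh(u+c)}=\frac{2c}{\sinh c}
\]
then yields exactly $L/\gamma_y$ per side. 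Finally, $\gamma_y$ is continuous in $y$, so letting $y\to d$ gives $\mathcal N\le 2L/\gamma_d$.

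The main obstacle is the middle step. The lower bound $|\cosh w\cosh(w+L/2)|\ge\gamma_y\cosh(\Re w)\cosh(\Re w+L/2)/\cos y$ must hold uniformly in $x$, including the outer regions where $\Im w$ exceeds $\pi/2$. There I would try to show that $|\sinh u|/\cosh u\ge\tanh\tau_{L,y}$ is bounded below by the same cosine constants. The specific form $\tau=T/(1+\sqrt T)$ suggests an elementary inequality between $\tanh$ and $\cos$ that I would have to verify carefully.
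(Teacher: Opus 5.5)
Your reduction matches the paper's argument in different notation. With $w=\frac{\pi}{2}\sinh\zeta$, the two estimates the paper relies on, Lemma~\ref{Lem:Bound-DEt-essential} and Lemma~\ref{lem:important-estimate-general-L}, say precisely that for all real $x$ (and $0<y<d_L$)
\[
|\cosh w|\geq\cos\left(\tfrac{\pi}{2}\sin y\right)\cosh(\operatorname{Re}w),
\qquad
|\cosh(w+L/2)|\geq\cos(a_{L,y})\cosh(\operatorname{Re}w+L/2).
\]
Your identity $\int_{\mathbb{R}}\D u/\{\cosh u\cosh(u+c)\}=2c/\sinh c$ is the paper's $\int_0^\infty\D t/\{(t+\E^{-L})(t+1)\}=L/(1-\E^{-L})$ after $t=\E^{2u}$.

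\textbf{The gap.} These two pointwise bounds carry the whole difficulty, and you do not establish them. Your window sketch also cannot produce the first one.
\begin{itemize}
\item The constant $\cos(\frac{\pi}{2}\sin y)$ is the value of $\cos(\Im w)$ at $x=0$ only, since $\Im w=\frac{\pi}{2}\cosh x\sin y$ is strictly larger for $x\neq0$.
\item In your window around $-x_{L,y}$, both factors have the same imaginary part, and it reaches $a_{L,y}>\frac{\pi}{2}\sin y$. So $\cosh(\operatorname{Re}w)\cos(\Im w)$ gives only the constant $\cos a_{L,y}$ for $\cosh w$.
\item In the outer regions, the condition $|\operatorname{Re}w+L/2|\geq\tau_{L,y}$ gives no lower bound on $|\operatorname{Re}w|$. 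The point $x=0$ lies there as soon as $L/2>\tau_{L,y}$, so $|\sinh(\operatorname{Re}w)|$ can vanish.
\item More generally, no split into a region using $|\cosh(u+\I v)|\geq\cosh u\cos v$ and a region using $|\cosh(u+\I v)|\geq|\sinh u|$ can give this constant. For small $x\neq0$, both $\cos(\Im w)$ and $\tanh|\operatorname{Re}w|$ lie strictly below $\cos(\frac{\pi}{2}\sin y)$.
\end{itemize}
You need the full identity $|\cosh(u+\I v)|^2=\sinh^2u+\cos^2v$ together with a global inequality centred at $x=0$, namely $|\sin(\frac{\pi}{2}\cosh x\sin y)|\leq\sin(\frac{\pi}{2}\sin y)\cosh(\frac{\pi}{2}\sinh x\cos y)$ for all real $x$. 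That is exactly what Lemma~\ref{Lem:Bound-DEt-essential} supplies; the paper cites it rather than reproving it.

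For the factor $\cosh(w+L/2)$, your window/outer-region plan is reasonable. However, its outer-region step requires $\tanh\tau_{L,y}\geq\cos a_{L,y}$ for every $y\in(0,d_L)$, and you leave this unverified. The paper does not prove this estimate either; it imports it as Lemma~\ref{lem:important-estimate-general-L}. So either cite both lemmas, at which point your proof coincides with the paper's, or prove both.

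\textbf{Bookkeeping.} With the product lower bound $\cos(\frac{\pi}{2}\sin y)\cos(a_{L,y})\cosh(\operatorname{Re}w)\cosh(\operatorname{Re}w+L/2)$, the pointwise estimate is
\[
|f_{\mathrm{DE}}(x+\I y,\lambda)|\leq\frac{\sinh(L/2)\,\pi\cosh x\cos y}{2\gamma_y\cosh(\operatorname{Re}w)\cosh(\operatorname{Re}w+L/2)}.
\]
Your display omits the factor $\cos y$ in the numerator. Integrating it as written gives $L/(\gamma_y\cos y)$ rather than $L/\gamma_y$.
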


Taking this lemma for granted for now, we combine it with
Theorem~\ref{theo1} to obtain the following discretization-error bound.

\begin{theorem}
\label{thm:our-discretization-error}
Let $A$ be a Hermitian positive definite matrix,
let $L = \max_{\lambda\in\Lambda(A)}|\log\lambda|$, and
let $d_0(\lambda)$ be defined by~\eqref{eq:d0-simple}.
Define
\begin{equation*}
d_*(A)=\min_{\lambda\in\Lambda(A)}d_0(\lambda).
\end{equation*}
For each $y\in(0,d_*(A))$,
let $x_{L,y}$, $T_{L,y}$, $\tau_{L,y}$, and
$a_{L,y}$ be defined
by~\eqref{eq:x-L-y}--\eqref{eq:a-L-y}, respectively.
Fix $q\in(0,1)$, independently of $A$ and $h$.
Then, for any $h>0$, we have
\[
\left\|
\log(A) - h\sum_{k=-\infty}^{\infty}F_{\mathrm{DE}}(kh)
\right\|_2
\leq
\frac{2\|\log(A)\|_2}{\gamma_{q d_*(A)}}
\cdot
\frac{\E^{-2\pi q d_*(A)/h}}
{1-\E^{-2\pi q d_*(A)/h}},
\]
where $\gamma_y$ is defined by~\eqref{eq:gamma_y}.
\end{theorem}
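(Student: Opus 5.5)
The plan is to reduce the matrix statement to scalar ones, apply Theorem~\ref{theo1} eigenvalue by eigenvalue with the common strip width $d=qd_*(A)$, and bound each boundary integral by Lemma~\ref{lem:bound-None}. Since $A$ is Hermitian positive definite, write $A=U\Lambda U^*$ with $U$ unitary. Then $\log(A)=\int_{-\infty}^{\infty}F_{\mathrm{DE}}(x)\D x$ by~\eqref{eq:logA-after-DEtrans}, and both $\log(A)$ and every $F_{\mathrm{DE}}(kh)$ are diagonalized by $U$. This gives the reduction~\eqref{eq:reduction-to-scalar}: the $2$-norm of the error equals the maximum over $\lambda\in\Lambda(A)$ of the scalar discretization error for $f_{\mathrm{DE}}(\cdot,\lambda)$.

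Fix $\lambda\in\Lambda(A)$ and set $L_\lambda=|\log\lambda|$ and $d=qd_*(A)$. Because $d_*(A)\le d_0(\lambda)$ and $q<1$, we have $d\in(0,d_0(\lambda))$. Lemma~\ref{lem:bound-None} then gives $f_{\mathrm{DE}}(\cdot,\lambda)\in\mathbf{B}(\domD_d)$ and $\mathcal{N}(f_{\mathrm{DE}}(\cdot,\lambda),d)\le 2L_\lambda/\gamma_d^{(\lambda)}$. Here $\gamma^{(\lambda)}$ denotes~\eqref{eq:gamma_y} built with $L_\lambda$ in place of $L$. Theorem~\ref{theo1} therefore bounds the scalar error by
\[
\frac{2L_\lambda}{\gamma^{(\lambda)}_d}\cdot\frac{\E^{-2\pi d/h}}{1-\E^{-2\pi d/h}},
\]
and this holds for every $h>0$. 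Note that $L_\lambda\le L=\|\log(A)\|_2$, since $\log(A)$ is Hermitian with eigenvalues $\log\lambda$.

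It remains to replace $\gamma^{(\lambda)}_d$ by $\gamma_d$, the quantity built with the maximal $L$. This requires showing that $\gamma_y$ is nonincreasing in $L$ for fixed $y$, so that $1/\gamma^{(\lambda)}_d\le1/\gamma_d$. I expect this monotonicity to be the main obstacle. I would check it factor by factor.
\begin{itemize}
\item The factors $\cos((\pi/2)\sin y)$ and $\cos y$ do not depend on $L$.
\item $T_{L,y}$ decreases in $L$. Since $t\mapsto t/(1+\sqrt t)$ is increasing for $t>0$, $\tau_{L,y}$ also decreases in $L$.
\item $\sinh x_{L,y}=L/(\pi\cos y)$ increases in $L$.
\end{itemize}
So $a_{L,y}$ contains the increasing term $L/(\pi\cos y)$ and the decreasing term $2\tau_{L,y}/(\pi\cos y)$. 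I would show their sum $L+2\tau_{L,y}$ is nondecreasing in $L$ by differentiating. We have $\mathrm{d}T/\mathrm{d}L=-1/2$, and the derivative of $t/(1+\sqrt t)$ is $(1+\sqrt t/2)/(1+\sqrt t)^2\le1$. Hence $\mathrm{d}(2\tau)/\mathrm{d}L\ge-1$, and the sum is nondecreasing.

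Consequently $a_{L,y}$ is nondecreasing in $L$. Since $\cos$ decreases on $[0,\pi/2)$, $\cos(a_{L,y})$ is nonincreasing in $L$. This relies on $a_{L,y}<\pi/2$ on the admissible range, which is implicit in Lemma~\ref{lem:bound-None} because its bound must be positive there; I would state this explicitly.

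A second point needs care: for $\lambda$ with $L_\lambda<L$, the admissible range $y<d_0(\lambda)$ is larger than $y<d_*(A)$. The comparison is made only at $y=d<d_*(A)$, where both quantities are defined, so this causes no difficulty. Eigenvalues $\lambda=1$ contribute zero error, since $f_{\mathrm{DE}}\equiv0$ there.

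Taking the maximum over $\lambda$ yields the stated bound.
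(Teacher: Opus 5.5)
Your proposal is correct and follows essentially the same route as the paper's proof. Both reduce to scalars, apply Lemma~\ref{lem:bound-None} and Theorem~\ref{theo1} with the common width $qd_*(A)$ for every eigenvalue, and then use monotonicity of $\gamma_y$ in $L$. The one small difference is in that last step: you prove $L+2\tau_{L,y}$ is nondecreasing via the derivative bound $\frac{\mathrm{d}}{\mathrm{d}t}\frac{t}{1+\sqrt t}\le 1$, whereas the paper substitutes $r=\sqrt{T_{L,y}}$ to write $\sinh x_{L,y}+2\tau_{L,y}/(\pi\cos y)=\cot y-2r^3/(\pi\cos y\,(1+r))$. From that form $0<a_{L,y}<\pi/2$ also follows at once, because the bracket lies in $(0,\cot y)$; this settles the point you left as implicit.
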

\begin{proof}
If $A=I$, the result is immediate.  Otherwise, since $d_0(\lambda)$
depends on $\lambda$ only through $|\log\lambda|$ and decreases as
$|\log\lambda|$ increases, we have
\[
 d_*(A)=d_0(\E^L).
\]
For each $\lambda\in\Lambda(A)$, put
$L_\lambda=|\log\lambda|$ and denote by $\gamma_{\lambda,y}$ the
quantity defined by~\eqref{eq:gamma_y} with $L=L_\lambda$.
Applying Lemma~\ref{lem:bound-None} with $d=q d_*(A)$ and then
Theorem~\ref{theo1}, we obtain
\[
 \left|
 \log\lambda-h\sum_{k=-\infty}^{\infty}
 f_{\mathrm{DE}}(kh,\lambda)
 \right|
 \leq
 \frac{2L_\lambda}{\gamma_{\lambda,q d_*(A)}}
 \frac{\E^{-2\pi q d_*(A)/h}}
 {1-\E^{-2\pi q d_*(A)/h}}.
\]
This estimate depends on the individual eigenvalue $\lambda$.
We next bound all these estimates uniformly by the one corresponding
to the worst-case value $L$.

For fixed $y\in(0,d_*(A))$, regard the quantities in
Eqs.~\eqref{eq:x-L-y}--\eqref{eq:a-L-y} as functions of $L$, and put
$r=\sqrt{T_{L,y}}$.  Squaring this relation and rearranging the
definition of $T_{L,y}$ gives
\[
 L=\frac{\pi\cos^2y}{\sin y}-2r^2.
\]
Using this identity, we may write
\[
 \sinh x_{L,y}+\frac{2\tau_{L,y}}{\pi\cos y}
 =\frac{L}{\pi\cos y}
 +\frac{2r^2}{\pi\cos y (1 + r)}
 =\frac{\cos y}{\sin y}
 -\frac{2r^3}{\pi\cos y(1+r)}.
\]
For fixed $y$, since $T_{L,y}$ decreases with $L$,
$r=\sqrt{T_{L,y}}$ decreases with $L$.  Moreover,
$r^3/(1+r)$ increases with $r>0$, and hence the expression
on the left increases with $L$.  Since $0<a_{L,y}<\pi/2$ for
$0<y<d_L$, it follows that $a_{L,y}$ increases and $\gamma_y$
decreases with $L$.  Since $L_\lambda\leq L$, we obtain
that
\[
 \gamma_{\lambda,q d_*(A)}
 \geq\gamma_{q d_*(A)},
 \qquad
 \frac{L_\lambda}{\gamma_{\lambda,q d_*(A)}}
 \leq\frac{L}{\gamma_{q d_*(A)}}.
\]
Taking the maximum over the eigenvalues of $A$ and using
$L=\|\log(A)\|_2$ proves the result.
\end{proof}

\begin{remark}
\label{rem:L-from-extreme-eigenvalues}
For a Hermitian positive definite matrix $A$, let $\lambda_{\min}$
and $\lambda_{\max}$ denote its minimum and maximum eigenvalues,
respectively. Then,
\[
 L=\|\log(A)\|_2
 =\max_{\lambda\in\Lambda(A)}|\log\lambda|
 =\max\{|\log\lambda_{\min}|,|\log\lambda_{\max}|\}
 =\max\{\log(1/\lambda_{\min}),\log\lambda_{\max}\}.
\]
Thus, evaluating $L$ requires only the two extreme eigenvalues of $A$,
not the computation of $\log(A)$ itself. Moreover, as shown in the proof
of Theorem~\ref{thm:our-discretization-error}, $d_*(A)=d_0(\E^L)$,
so the constants in the discretization-error bound can be evaluated
from these eigenvalues without forming $\log(A)$.
\end{remark}

\subsection{Evaluation of the overall error}
\label{sec:evaluate-total}
Using Theorem~\ref{theo1} and Lemma~\ref{lemma:FDE_tail_bound}, the error evaluation for the computation method in this study is as follows.

\begin{theorem}\label{thm:DE_logA_error_simple}
Let the assumptions of
Theorem~\ref{thm:our-discretization-error} be fulfilled.
Write $L=\|\log(A)\|_2$.
Furthermore, let $n$ be a positive integer and define
\begin{align}
h &= \frac{1}{n}, \label{eq:h_def} \\
N &= \left\lceil \frac{1}{h} \arsinh(\pi n) \right\rceil. \label{eq:N_def}
\end{align}
Then, we have
\[
\left\|\log(A)-h\sum_{k=-N}^{N}F_{\mathrm{DE}}(kh)\right\|_2
\leq
2\|\log(A)\|_2\left\{
\frac{1}{\gamma_{q d_*(A)}}
+\E^{2q\cos(d_*(A))}
\right\}
\frac{\E^{-2\pi q d_*(A)n}}{1-\E^{-2\pi q d_*(A)n}},
\]
where $\gamma_y$ is defined by~\eqref{eq:gamma_y}.
\end{theorem}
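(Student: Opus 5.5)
We split the error with the triangle inequality into the discretization error of the infinite trapezoidal sum and the truncation error of that sum:
\[
\left\|\log(A)-h\sum_{k=-N}^{N}F_{\mathrm{DE}}(kh)\right\|_2
\le
\left\|\log(A)-h\sum_{k=-\infty}^{\infty}F_{\mathrm{DE}}(kh)\right\|_2
+\left\|h\sum_{|k|\ge N+1}F_{\mathrm{DE}}(kh)\right\|_2 .
\]
Since $h=1/n$, Theorem~\ref{thm:our-discretization-error} bounds the first term directly by $2L\gamma_{qd_*(A)}^{-1}\E^{-2\pi qd_*(A)n}/(1-\E^{-2\pi qd_*(A)n})$. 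This is exactly the $1/\gamma$ part of the claim, so no further work is needed there.

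For the second term we apply Lemma~\ref{lemma:FDE_tail_bound}, which gives the bound $\|A-A^{-1}\|_2\E^{-\pi\sinh(Nh)}$. By~\eqref{eq:N_def}, $Nh\ge\arsinh(\pi n)$, and $\sinh$ is increasing, so $\sinh(Nh)\ge\pi n$. Hence the tail is at most $\|A-A^{-1}\|_2\E^{-\pi^2 n}$. Next we express $\|A-A^{-1}\|_2$ through $L$. By unitary diagonalization it equals $\max_\lambda|\lambda-1/\lambda|=\max_\lambda 2\sinh|\log\lambda|=2\sinh L$. This is the point where an exponential in $L$ enters.

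It remains to show
\[
2\sinh(L)\,\E^{-\pi^2 n}\le 2L\,\E^{2q\cos d_*(A)}\,\frac{\E^{-2\pi q d_*(A) n}}{1-\E^{-2\pi q d_*(A) n}}.
\]
We drop the denominator, since it is at most $1$. Because $q<1$ and $d_*(A)\le\pi/2$, we have $2\pi q d_*(A)\le\pi^2$, with strict slack. Thus it suffices to show $\sinh(L)/L\le\E^{2q\cos d_*}\E^{(\pi^2-2\pi q d_*)n}$, and with $n\ge1$ it suffices to show the case $n=1$.

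This last inequality is the main obstacle: it requires relating $L$ to $d_*=d_0(\E^L)$ through~\eqref{eq:d0-simple}. Write $c=\cos d_*$. Then $c^2=2/(1+\sqrt{1+(2\pi/L)^2})$, which inverts to $L=\pi c^2/\sqrt{1-c^2}=\pi c^2/\sin d_*$. For large $L$ we have $c\to1$ and $d_*\to0$, with $\sin d_*\approx\pi/L$, so $d_*\approx\pi/L$. The right side then tends to $\E^{2q}\E^{\pi^2}$ up to $\OO(1/L)$ in the exponent, while $\sinh L/L$ grows like $\E^L$. The inequality therefore cannot hold uniformly with $n=1$, and we must retain $n$. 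In that case we would instead use the large-$n$ slack $\E^{(\pi^2-2\pi qd_*)n}$, or, more likely, a sharper tail estimate than $\sinh(Nh)\ge\pi n$.

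Our first attempt would be the sharper tail. Use $\sinh(Nh)\ge\sinh(\arsinh(\pi n))$ together with the bound $\E^{-\pi\sinh(kh)}$ applied eigenvalue-wise before taking the maximum. Then check whether the factor $\E^{2q\cos d_*}$ arises from $\E^{L\cdot(\ldots)}$ via $L\sin d_*=\pi c^2$ in the form $\sinh L\le L\,\E^{2q c}\E^{(\pi^2-2\pi q d_*)}$ with $d_*$ eliminated. Failing that, we would bound $\sinh L/L$ by $\E^{2qc}\E^{\pi^2-2\pi q d_*}$ using $d_*\le\pi/2$ for moderate $L$, and verify the monotonicity in $L$ numerically or by calculus.
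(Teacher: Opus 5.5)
Your splitting and both ingredient bounds are right ($E_{\mathrm{disc}}$ from Theorem~\ref{thm:our-discretization-error}, and $E_{\mathrm{tail}}\le 2\sinh L\,\E^{-\pi^2 n}$ from Lemma~\ref{lemma:FDE_tail_bound}). However, the proposal has a genuine gap: it never handles the regime where $n$ is small compared with $L$, and the triangle-inequality route cannot handle it. For fixed $n$ and $L\to\infty$ one has $d_*(A)\approx\pi/L$, $\alpha=2\pi q d_*(A)\approx 2\pi^2 q/L$, and $\gamma_{qd_*(A)}\to\cos(q\pi/2)>0$. So the entire right-hand side of the theorem grows only like $L^2$, whereas $2\sinh L\,\E^{-\pi^2 n}$ grows like $\E^{L}$. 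Hence none of your suggested repairs can close the argument. A sharper use of $N$ fails because $\sinh(Nh)\ge\pi n$ is tight up to a bounded factor. Eigenvalue-wise bookkeeping fails because the worst eigenvalue has $|\log\lambda|=L$ anyway. A numerical check for moderate $L$ does not cover large $L$. In that regime the factor $\|A-A^{-1}\|_2$ in the tail lemma is itself the obstruction.

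The missing idea is a case split at $\pi^2 n=L$.

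For $\pi^2 n\ge L$, your ``large-$n$ slack'' works once made precise. With $\sinh L\le L\E^{L}$ and $n\ge L/\pi^2$, you get $\E^{L-\pi^2 n}=\E^{L-(\pi^2-\alpha)n}\E^{-\alpha n}\le\E^{\alpha L/\pi^2}\E^{-\alpha n}$. Put $d=d_*(A)$. The identity $L=\pi\cos^2 d/\sin d$ that you derived, combined with $\tan d>d$, gives $Ld<\pi\cos d$, i.e.\ $\alpha L/\pi^2=2qLd/\pi<2q\cos d$. This is exactly where $\E^{2q\cos d_*(A)}$ comes from.

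For $\pi^2 n<L$, you must drop the triangle inequality and use positivity instead. The scalar integrand $f_{\mathrm{DE}}(x,\lambda)$ has the constant sign of $\lambda-1$ on $\mathbb{R}$. Hence $Q_N(\lambda)=h\sum_{|k|\le N}f_{\mathrm{DE}}(kh,\lambda)$, its untruncated analogue $Q_\infty(\lambda)$, and $\log\lambda$ share a sign, with $|Q_N|\le|Q_\infty|$. This gives $|\log\lambda-Q_N(\lambda)|\le\max\{|\log\lambda-Q_\infty(\lambda)|,|\log\lambda|\}$, so the total error is at most $\max\{E_{\mathrm{disc}},L\}$. Finally, $\alpha n<\alpha L/\pi^2<2q\cos d$ yields $L=(\E^{\alpha n}-1)L\,\E^{-\alpha n}/(1-\E^{-\alpha n})<2\E^{2q\cos d}L\,\E^{-\alpha n}/(1-\E^{-\alpha n})$, which completes this case.
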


\begin{proof}
If $L=0$, then $A=I$, and the result
is immediate.  We therefore assume $L>0$.
By the proof of Theorem~\ref{thm:our-discretization-error},
$d_*(A)=d_0(\E^L)$.  Put $d=d_*(A)$ for short.
Then $0<d<\pi/2$, and Eq.~\eqref{eq:d0-simple} yields
\[
\left(\frac{2\pi}{L}\right)^2
=\left(\frac{2}{\cos^2d}-1\right)^2-1
=\frac{4\sin^2d}{\cos^4d}.
\]
Consequently, since $\tan d>d$, we have
\[
L=\frac{\pi\cos^2d}{\sin d},
\qquad
Ld=\pi\cos d\frac{d}{\tan d}<\pi\cos d.
\]
Put $\alpha=2\pi qd$.
Since $0<d<\pi/2$ and $q\in(0,1)$, we have $0<\alpha<\pi^2$, and
\begin{equation}
\frac{\alpha L}{\pi^2}
=2q \frac{L d}{\pi}<2q \cos d.
\label{eq:general-alpha-L-bound}
\end{equation}
Let $E_{\mathrm{disc}}$ and $E_{\mathrm{tail}}$ denote the errors
\begin{align*}
E_{\mathrm{disc}}
&=\left\|\log(A)-h\sum_{k=-\infty}^{\infty}
F_{\mathrm{DE}}(kh)\right\|_2,\\
E_{\mathrm{tail}}
&=\left\|h\sum_{k=-\infty}^{-N-1}F_{\mathrm{DE}}(kh)
+h\sum_{k=N+1}^{\infty}F_{\mathrm{DE}}(kh)\right\|_2.
\end{align*}
By Theorem~\ref{thm:our-discretization-error},
Lemma~\ref{lemma:FDE_tail_bound}, and the definitions of $h$ and $N$,
\[
E_{\mathrm{disc}}\leq \frac{2}{\gamma_{q d}} L
\frac{\E^{-\alpha n}}{1-\E^{-\alpha n}},
\qquad
E_{\mathrm{tail}}\leq 2\sinh L\E^{-\pi^2n}.
\]
Here, the truncation-error bound follows from
$\pi\sinh(Nh)\geq\pi^2n$ and
\[
\|A-A^{-1}\|_2
=\max_{\lambda\in\Lambda(A)}|\lambda-\lambda^{-1}|
=2\max_{\lambda\in\Lambda(A)}\sinh(|\log\lambda|)
\leq2\sinh L.
\]
In what follows, we consider two cases.

First, suppose that $\pi^2n\geq L$.
Since $\sinh L\leq L\E^L$ and $\pi^2-\alpha>0$,
\begin{align*}
E_{\mathrm{tail}}
&\leq 2L\E^{L-\pi^2n}
=2L\E^{L-(\pi^2-\alpha)n}\E^{-\alpha n}\\
&\leq 2L\E^{L-(\pi^2-\alpha)L/\pi^2}\E^{-\alpha n}
=2L\E^{\alpha L/\pi^2}\E^{-\alpha n}\\
&\leq 2\E^{2q\cos d}L\E^{-\alpha n},
\end{align*}
where the last inequality follows from~\eqref{eq:general-alpha-L-bound}.
Since $0<1-\E^{-\alpha n}<1$, we obtain
\[
E_{\mathrm{tail}}\leq 2\E^{2q\cos d}L
\frac{\E^{-\alpha n}}{1-\E^{-\alpha n}}.
\]
Adding the discretization-error bound proves the claimed estimate
in this case.

Next, suppose that $\pi^2n<L$.
For each $\lambda\in\Lambda(A)$, put
\[
Q_N(\lambda)=h\sum_{k=-N}^{N}f_{\mathrm{DE}}(kh,\lambda),
\qquad
Q_{\infty}(\lambda)=h\sum_{k=-\infty}^{\infty}
f_{\mathrm{DE}}(kh,\lambda).
\]
If $\lambda\ne1$, then $f_{\mathrm{DE}}(x,\lambda)$ has the
constant sign of $\lambda-1$ on the real axis.
Thus, $Q_N(\lambda)$, $Q_{\infty}(\lambda)$, and $\log\lambda$
have the same sign, and $|Q_N(\lambda)|\leq|Q_{\infty}(\lambda)|$.
Therefore, when $|\log\lambda|\leq|Q_N(\lambda)|$, it follows that
\[
 |\log\lambda-Q_N(\lambda)|
=|Q_N(\lambda)|-|\log\lambda|
\leq|Q_{\infty}(\lambda)|-|\log\lambda|
=|\log\lambda-Q_{\infty}(\lambda)|.
\]
When $|\log\lambda|\geq|Q_N(\lambda)|$, it follows that
\[
 |\log\lambda-Q_N(\lambda)|
=|\log\lambda|-|Q_N(\lambda)|
\leq|\log\lambda|.
\]
Combining these two cases gives
\[
|\log\lambda-Q_N(\lambda)|
\leq\max\left\{
|\log\lambda-Q_{\infty}(\lambda)|,
|\log\lambda|
\right\}.
\]
This inequality also holds for $\lambda=1$, since all terms vanish.
Taking the maximum over the eigenvalues of $A$ gives
\[
\left\|\log(A)-h\sum_{k=-N}^{N}F_{\mathrm{DE}}(kh)\right\|_2
\leq\max\{E_{\mathrm{disc}}, L\}.
\]
Moreover, since
$\alpha n<\alpha L/\pi^2<2q\cos d$, we have
\[
L=(\E^{\alpha n}-1)L
\frac{\E^{-\alpha n}}{1-\E^{-\alpha n}}
\leq(\E^{2q\cos d}-1)L
\frac{\E^{-\alpha n}}{1-\E^{-\alpha n}}
<2\E^{2q\cos d}L
\frac{\E^{-\alpha n}}{1-\E^{-\alpha n}}.
\]
Therefore,
\begin{align*}
\left\|\log(A)-h\sum_{k=-N}^{N}F_{\mathrm{DE}}(kh)\right\|_2
&\leq\max\left\{\frac{2}{\gamma_{q d}},2\E^{2q\cos d}\right\}L
\frac{\E^{-\alpha n}}{1-\E^{-\alpha n}}
\leq \left(\frac{2}{\gamma_{q d}}+2\E^{2q\cos d}\right)L
\frac{\E^{-\alpha n}}{1-\E^{-\alpha n}}.
\end{align*}
This completes the proof.
\end{proof}

\begin{remark}
The replacement of $\|A-A^{-1}\|_2$ in
Lemma~\ref{lemma:FDE_tail_bound} by $\|\log(A)\|_2$ in
Theorem~\ref{thm:DE_logA_error_simple} follows from the different
error decay rates, not a direct comparison of these norms.
For fixed $A\ne I$ and $q\in(0,1)$, put
$L=\|\log(A)\|_2$ and $\alpha=2\pi q d_*(A)$. The proof gives
\[
E_{\mathrm{disc}}=\OO(\E^{-\alpha n}),
\qquad
E_{\mathrm{tail}}=\OO(\E^{-\pi^2n})
\qquad(n\to\infty),
\]
with $0<\alpha<\pi^2$. This faster tail decay absorbs the factor
$\E^L$ in $\|A-A^{-1}\|_2\leq2L\E^L$ when $\pi^2n\geq L$;
for $\pi^2n<L$, the constant sign of the scalar integrands instead
bounds the overall error by $\max\{L,E_{\mathrm{disc}}\}$.
Together, these arguments give the bound for every positive integer $n$.
For $A\ne I$, factoring out $L$ yields a relative-error bound, whose
matrix dependence remains through $d_*(A)$ and $\gamma_{q d_*(A)}$.
\end{remark}

In particular, as described in Section~\ref{subsec:total_error}, when $\lambda_{\min}\lambda_{\max} = 1$ holds, the following theorem is obtained.

\begin{theorem}\label{thm:DE_logA_error}
Let $A$ be a Hermitian positive definite matrix, and assume that its
minimum eigenvalue $\lambda_{\min}$ and maximum eigenvalue
$\lambda_{\max}$ satisfy $\lambda_{\min}\lambda_{\max}=1$.
Let $L=\log(\sqrt{\kappa(A)})$, and let $d_0(\lambda)$ be defined
by~\eqref{eq:d0-simple}.
For each $y\in(0,d_0(\sqrt{\kappa(A)}))$,
let $x_{L,y}$, $T_{L,y}$, $\tau_{L,y}$, and $a_{L,y}$ be defined
by~\eqref{eq:x-L-y}--\eqref{eq:a-L-y}, respectively.
Fix $q\in(0,1)$, independently of $A$ and $h$.
Let $n$ be a positive integer, and let $h$ and $N$ be defined
by~\eqref{eq:h_def} and~\eqref{eq:N_def}, respectively.
Then, we have
\[
\left\|\log(A)-h\sum_{k=-N}^{N}F_{\mathrm{DE}}(kh)\right\|_2
\leq
2L\left\{
\frac{1}{\gamma_{q d_0(\sqrt{\kappa(A)})}}+\E^{2q\cos(d_0(\sqrt{\kappa(A)}))}
\right\}
\frac{\E^{-2\pi q d_0(\sqrt{\kappa(A)})n}}
{1-\E^{-2\pi q d_0(\sqrt{\kappa(A)})n}},
\]
where $\gamma_y$ is defined by~\eqref{eq:gamma_y}.
\end{theorem}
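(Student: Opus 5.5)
This theorem is a direct specialization of Theorem~\ref{thm:DE_logA_error_simple}, so the plan is to verify that, under the normalization $\lambda_{\min}\lambda_{\max}=1$, the quantities $\|\log(A)\|_2$ and $d_*(A)$ appearing there coincide with $L=\log(\sqrt{\kappa(A)})$ and $d_0(\sqrt{\kappa(A)})$, respectively. No new analytic estimate is required.

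First, since $A$ is Hermitian positive definite, $\kappa(A)=\lambda_{\max}/\lambda_{\min}$. With $\lambda_{\min}\lambda_{\max}=1$, we have $\lambda_{\max}=\sqrt{\kappa(A)}$ and $\lambda_{\min}=1/\sqrt{\kappa(A)}$. By Remark~\ref{rem:L-from-extreme-eigenvalues},
\[
\|\log(A)\|_2=\max\{\log(1/\lambda_{\min}),\log\lambda_{\max}\}=\log\sqrt{\kappa(A)}=L,
\]
which matches the $L$ of the present statement.

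Second, identify the strip width. The proof of Theorem~\ref{thm:our-discretization-error} shows that $d_*(A)=d_0(\E^{\|\log(A)\|_2})$. Here this gives $d_*(A)=d_0(\E^{L})=d_0(\sqrt{\kappa(A)})$. Alternatively, one can argue directly: by~\eqref{eq:d0-simple}, $d_0$ depends only on $|\log\lambda|$ and decreases in it, and $\lambda_{\min}$ and $\lambda_{\max}$ give the same value $|\log\lambda|=L$, so both attain the minimum. Since the quantities $x_{L,y},T_{L,y},\tau_{L,y},a_{L,y}$ and $\gamma_y$ are defined from the same $L$, the constant $\gamma_{q d_*(A)}$ in Theorem~\ref{thm:DE_logA_error_simple} is exactly $\gamma_{q d_0(\sqrt{\kappa(A)})}$. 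The admissible range $y\in(0,d_0(\sqrt{\kappa(A)}))$ also coincides with $(0,d_*(A))$.

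Finally, substitute these identifications into the bound of Theorem~\ref{thm:DE_logA_error_simple}, with the same $h=1/n$ and $N$ from~\eqref{eq:N_def}. This yields the claimed inequality verbatim. The degenerate case $A=I$ ($\kappa(A)=1$, $L=0$) is handled as in that theorem: both sides vanish, since every $F_{\mathrm{DE}}(kh)$ contains the factor $A-I$.

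The only potential obstacle is bookkeeping: one must confirm that the $L$ used inside $\gamma_y$ in Theorem~\ref{thm:DE_logA_error_simple} is $\|\log(A)\|_2$, the worst-case value, and not an eigenvalue-dependent quantity. The proof of Theorem~\ref{thm:our-discretization-error} establishes exactly this.
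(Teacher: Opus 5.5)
Your proposal is correct and matches the paper's proof: the normalization gives $\lambda_{\max}=\lambda_{\min}^{-1}=\sqrt{\kappa(A)}$, so $\|\log(A)\|_2=L$ and $d_*(A)=d_0(\E^{L})=d_0(\sqrt{\kappa(A)})$, and substituting these into Theorem~\ref{thm:DE_logA_error_simple} yields the bound. Your additional checks are consistent with how the paper treats these points, namely that $\gamma_y$ is built from the worst-case $L$ and that $A=I$ is trivial.
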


\begin{proof}
The normalization gives $\lambda_{\max}=\lambda_{\min}^{-1}
=\sqrt{\kappa(A)}$, hence
\[
\|\log(A)\|_2=\log(\sqrt{\kappa(A)})=L,
\qquad
d_*(A)=d_0(\E^L)=d_0(\sqrt{\kappa(A)}).
\]
Substitution into Theorem~\ref{thm:DE_logA_error_simple} proves the claim.
\end{proof}

\subsection{Extension to diagonalizable matrices with positive real eigenvalues}
\label{sec:evaluate-total-diagonalizable}
The overall-error bounds extend to diagonalizable matrices with positive
real eigenvalues by a similarity transformation.  In this setting,
the maximum absolute logarithm of the eigenvalues need not equal
$\|\log(A)\|_2$, and the eigenvalue ratio need not equal the spectral
condition number $\kappa(A)$.

\begin{theorem}
\label{thm:DE_logA_error_simple_diagonalizable}
Let $A\in\mathbb{C}^{m\times m}$ be diagonalizable with positive real
eigenvalues, and fix any diagonalization
\[
A=VDV^{-1},\qquad
D=\diag(\lambda_1,\ldots,\lambda_m),
\qquad \lambda_i>0.
\]
Put
\[
L_\Lambda=\max_{\lambda\in\Lambda(A)}|\log\lambda|,
\qquad
d_*(A)=\min_{\lambda\in\Lambda(A)}d_0(\lambda),
\]
where $d_0(\lambda)$ is defined by~\eqref{eq:d0-simple}.
For each $y\in(0,d_*(A))$, define $x_{L_\Lambda,y}$,
$T_{L_\Lambda,y}$, $\tau_{L_\Lambda,y}$, and $a_{L_\Lambda,y}$
by~\eqref{eq:x-L-y}--\eqref{eq:a-L-y} with $L=L_\Lambda$,
and define $\gamma_y$ by~\eqref{eq:gamma_y} with the same choice of $L$.
Fix $q\in(0,1)$, independently of $A$ and $h$.
Let $n$ be a positive integer, and let $h$ and $N$ be defined
by~\eqref{eq:h_def} and~\eqref{eq:N_def}, respectively.
Then,
\[
\left\|\log(A)-h\sum_{k=-N}^{N}F_{\mathrm{DE}}(kh)\right\|_2
\leq
2\kappa(V)\|\log(A)\|_2\left\{
\frac{1}{\gamma_{q d_*(A)}}+\E^{2q\cos(d_*(A))}
\right\}
\frac{\E^{-2\pi q d_*(A)n}}{1-\E^{-2\pi q d_*(A)n}}.
\]
The sharper estimate obtained by replacing the prefactor
$\|\log(A)\|_2$ by $L_\Lambda$ also holds; the definitions of
$d_*(A)$ and $\gamma_y$ remain unchanged.
\end{theorem}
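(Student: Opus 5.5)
The plan is to reduce to the scalar estimates already used in the proof of Theorem~\ref{thm:DE_logA_error_simple}. The only change is that unitary diagonalization is replaced by the similarity $A=VDV^{-1}$.

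First, for every real $x$ the matrix $A+\E^{-\pi\sinh x}I=V(D+\E^{-\pi\sinh x}I)V^{-1}$ is invertible. It follows that $F_{\mathrm{DE}}(x)=V\,\diag(f_{\mathrm{DE}}(x,\lambda_1),\ldots,f_{\mathrm{DE}}(x,\lambda_m))\,V^{-1}$. Likewise $\log(A)=V\diag(\log\lambda_i)V^{-1}$, since the principal logarithm is a primary matrix function. Hence
\[
\log(A)-h\sum_{k=-N}^{N}F_{\mathrm{DE}}(kh)
=V\,\diag\Bigl(\log\lambda_i-Q_N(\lambda_i)\Bigr)V^{-1},
\]
with $Q_N$ as in the proof of Theorem~\ref{thm:DE_logA_error_simple}. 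Taking $2$-norms gives a bound of
\[
\kappa(V)\max_i|\log\lambda_i-Q_N(\lambda_i)|.
\]
The remaining task is a purely scalar bound for each eigenvalue.

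For the scalar part, I would re-run the argument of Theorems~\ref{thm:our-discretization-error} and~\ref{thm:DE_logA_error_simple} with $L$ replaced by $L_\Lambda$. That argument never used the Hermitian structure beyond the reduction to eigenvalues.

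\textbf{Discretization term.} By Lemma~\ref{lem:bound-None} and Theorem~\ref{theo1} with $d=qd_*(A)$, together with the monotonicity of $\gamma$ in $L$ shown in the proof of Theorem~\ref{thm:our-discretization-error}, we get
\[
|\log\lambda-Q_\infty(\lambda)|\le \frac{2L_\Lambda}{\gamma_{qd_*(A)}}\cdot\frac{\E^{-\alpha n}}{1-\E^{-\alpha n}},
\qquad \alpha=2\pi q d_*(A).
\]

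\textbf{Tail term.} The scalar computation in the proof of Lemma~\ref{lemma:FDE_tail_bound} gives
\[
|Q_\infty(\lambda)-Q_N(\lambda)|\le|\lambda-\lambda^{-1}|\E^{-\pi^2 n}\le 2\sinh(L_\Lambda)\E^{-\pi^2n}.
\]

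\textbf{Case split.} The split $\pi^2n\ge L_\Lambda$ versus $\pi^2n<L_\Lambda$ then goes through verbatim. It uses $d_*(A)=d_0(\E^{L_\Lambda})$, hence $L_\Lambda d_*<\pi\cos d_*$. It also uses the sign argument giving $|\log\lambda-Q_N|\le\max\{|\log\lambda-Q_\infty|,|\log\lambda|\}$. The result is
\[
\max_i|\log\lambda_i-Q_N(\lambda_i)|\le 2L_\Lambda\Bigl\{\frac{1}{\gamma_{qd_*(A)}}+\E^{2q\cos d_*(A)}\Bigr\}\frac{\E^{-\alpha n}}{1-\E^{-\alpha n}}.
\]
Multiplying by $\kappa(V)$ yields the sharper statement with prefactor $L_\Lambda$.

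Finally, to obtain the stated version with $\|\log(A)\|_2$, I would use $L_\Lambda=\rho(\log(A))\le\|\log(A)\|_2$. This holds because the eigenvalues of $\log(A)$ are the $\log\lambda_i$. Since $d_*(A)$ and $\gamma$ are defined through $L_\Lambda$, they remain unchanged.

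The main obstacle is essentially bookkeeping: checking that every step of the Hermitian proofs used only eigenvalue-wise scalar facts. In particular, Theorem~\ref{thm:our-discretization-error} was phrased with $L=\|\log(A)\|_2$, so one must check that its monotonicity argument is really in terms of $\max|\log\lambda|$. The only genuinely new loss is the factor $\kappa(V)$, which enters through submultiplicativity.
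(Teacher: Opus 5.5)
Your proposal is correct and follows the paper's route: similarity to $D$, the factor $\kappa(V)$ from submultiplicativity, the eigenvalue-wise bound with prefactor $L_\Lambda$, and finally $L_\Lambda=\rho(\log(A))\le\|\log(A)\|_2$. The paper avoids your bookkeeping step by noting that $D$ is itself Hermitian positive definite with $\|\log(D)\|_2=L_\Lambda$ and $d_*(D)=d_*(A)$, so Theorem~\ref{thm:DE_logA_error_simple} can be applied to $D$ directly (the case $L_\Lambda=0$ gives $A=I$ by diagonalizability) rather than rerunning the scalar argument.
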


\begin{proof}
Write $F_{\mathrm{DE}}(x;B)$ for~\eqref{eq:FDE-x} with $A$ replaced by
$B$, and put $S_n(B)=h\sum_{k=-N}^{N}F_{\mathrm{DE}}(kh;B)$.
The similarity identities $F_{\mathrm{DE}}(x;A)
=VF_{\mathrm{DE}}(x;D)V^{-1}$ and $\log(A)=V\log(D)V^{-1}$ give
\[
\|\log(A)-S_n(A)\|_2
=\|V\{\log(D)-S_n(D)\}V^{-1}\|_2
\leq\kappa(V)\|\log(D)-S_n(D)\|_2.
\]
Since $D$ is Hermitian positive definite,
$\|\log(D)\|_2=L_\Lambda$ and $d_*(D)=d_*(A)$.
Applying Theorem~\ref{thm:DE_logA_error_simple} to $D$ proves the sharper
estimate; $L_\Lambda\leq\|\log(A)\|_2$, since $\log\lambda_i$ are the
eigenvalues of $\log(A)$, then gives the stated bound.
If $L_\Lambda=0$, diagonalizability implies $A=I$, and both bounds
hold with zero error.
\end{proof}

For the normalized case, we use the eigenvalue ratio
$\kappa_\Lambda(A)=\lambda_{\max}/\lambda_{\min}$ rather than
the spectral condition number $\kappa(A)$.

\begin{theorem}
\label{thm:DE_logA_error_diagonalizable}
Let $A\in\mathbb{C}^{m\times m}$ be diagonalizable with positive real
eigenvalues, and fix a diagonalization $A=VDV^{-1}$.
Assume that the minimum eigenvalue $\lambda_{\min}$ and maximum
eigenvalue $\lambda_{\max}$ of $A$ satisfy
$\lambda_{\min}\lambda_{\max}=1$.
Put $\kappa_\Lambda(A)=\lambda_{\max}/\lambda_{\min}$ and
$L_\Lambda=\log(\sqrt{\kappa_\Lambda(A)})$.
For each $y\in(0,d_0(\sqrt{\kappa_\Lambda(A)}))$, define
$x_{L_\Lambda,y}$, $T_{L_\Lambda,y}$, $\tau_{L_\Lambda,y}$,
and $a_{L_\Lambda,y}$ by~\eqref{eq:x-L-y}--\eqref{eq:a-L-y}
with $L=L_\Lambda$, and define $\gamma_y$ by~\eqref{eq:gamma_y}
with the same choice of $L$.
Fix $q\in(0,1)$, independently of $A$ and $h$.
Let $n$ be a positive integer, and let $h$ and $N$ be defined
by~\eqref{eq:h_def} and~\eqref{eq:N_def}, respectively.
Then,
\[
\left\|\log(A)-h\sum_{k=-N}^{N}F_{\mathrm{DE}}(kh)\right\|_2
\leq
2\kappa(V)\|\log(A)\|_2\left\{
\frac{1}{\gamma_{q d_0(\sqrt{\kappa_\Lambda(A)})}}+\E^{2q\cos(d_0(\sqrt{\kappa_\Lambda(A)}))}
\right\}
\frac{\E^{-2\pi q d_0(\sqrt{\kappa_\Lambda(A)})n}}
{1-\E^{-2\pi q d_0(\sqrt{\kappa_\Lambda(A)})n}}.
\]
The sharper estimate obtained by replacing the prefactor
$\|\log(A)\|_2$ by $L_\Lambda$ also holds.
\end{theorem}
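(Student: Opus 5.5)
This is a direct specialization of Theorem~\ref{thm:DE_logA_error_simple_diagonalizable}, just as Theorem~\ref{thm:DE_logA_error} specializes Theorem~\ref{thm:DE_logA_error_simple}. We fix the given diagonalization $A=VDV^{-1}$ and apply Theorem~\ref{thm:DE_logA_error_simple_diagonalizable} with it. That yields both the stated bound, with prefactor $\|\log(A)\|_2$, and the sharper one, with prefactor $L_\Lambda$. The data entering that theorem are $L_\Lambda=\max_{\lambda\in\Lambda(A)}|\log\lambda|$ and $d_*(A)=\min_{\lambda\in\Lambda(A)}d_0(\lambda)$. It therefore suffices to identify these two quantities in terms of $\kappa_\Lambda(A)$ under the normalization.

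\textbf{Identifying $L_\Lambda$.} The normalization $\lambda_{\min}\lambda_{\max}=1$ gives
\[
\lambda_{\max}=\lambda_{\min}^{-1}=\sqrt{\lambda_{\max}/\lambda_{\min}}=\sqrt{\kappa_\Lambda(A)}.
\]
All eigenvalues are positive reals lying in $[\lambda_{\min},\lambda_{\max}]$. Hence
\[
\max_{\lambda\in\Lambda(A)}|\log\lambda|=\max\{\log(1/\lambda_{\min}),\log\lambda_{\max}\}=\log\sqrt{\kappa_\Lambda(A)},
\]
which is the $L_\Lambda$ of the present statement. This agrees with the one in Theorem~\ref{thm:DE_logA_error_simple_diagonalizable}, so the functions $x_{L,y},T_{L,y},\tau_{L,y},a_{L,y},\gamma_y$ built from it coincide.

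\textbf{Identifying $d_*(A)$.} By Eq.~\eqref{eq:d0-simple}, $d_0(\lambda)$ depends only on $|\log\lambda|$ and is nonincreasing in it, as used in the proof of Theorem~\ref{thm:our-discretization-error}. Hence
\[
d_*(A)=d_0(\E^{L_\Lambda})=d_0(\sqrt{\kappa_\Lambda(A)}).
\]
Consequently the admissible range $y\in(0,d_*(A))$ matches the range $(0,d_0(\sqrt{\kappa_\Lambda(A)}))$ in the statement. Likewise $\gamma_{q d_*(A)}$, $\cos(d_*(A))$ and the exponent $2\pi q d_*(A)n$ become the displayed quantities.

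\textbf{Degenerate case.} If $\kappa_\Lambda(A)=1$, then all eigenvalues equal $1$, and diagonalizability forces $A=I$. Both sides then vanish, exactly as handled at the end of the proof of Theorem~\ref{thm:DE_logA_error_simple_diagonalizable}. Substituting the identifications above completes the argument.

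There is no real obstacle here. The only point needing care is the monotonicity of $d_0$ in $|\log\lambda|$, which is already established earlier in the paper.
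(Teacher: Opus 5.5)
Your proposal is correct and matches the paper's proof. The paper likewise uses the normalization to get $\lambda_{\max}=\lambda_{\min}^{-1}=\sqrt{\kappa_\Lambda(A)}$, hence $\max_{\lambda\in\Lambda(A)}|\log\lambda|=L_\Lambda$ and $d_*(A)=d_0(\E^{L_\Lambda})=d_0(\sqrt{\kappa_\Lambda(A)})$, and then substitutes into Theorem~\ref{thm:DE_logA_error_simple_diagonalizable} to obtain both estimates.
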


\begin{proof}
The normalization gives $\lambda_{\max}=\lambda_{\min}^{-1}
=\sqrt{\kappa_\Lambda(A)}$, hence
\[
\max_{\lambda\in\Lambda(A)}|\log\lambda|=L_\Lambda,
\qquad
d_*(A)=d_0(\E^{L_\Lambda})=d_0(\sqrt{\kappa_\Lambda(A)}).
\]
Substitution into Theorem~\ref{thm:DE_logA_error_simple_diagonalizable}
proves both estimates.
\end{proof}

\begin{remark}
Both theorems hold for every choice of the diagonalization $V$.
The factor $\kappa(V)$ depends on this choice, including the scaling
of the eigenvectors; it is not the condition number of $A$.
For Hermitian positive definite $A$, one can choose $V$ unitary,
so that $\kappa(V)=1$, $L_\Lambda=\|\log(A)\|_2$, and
$\kappa_\Lambda(A)=\kappa(A)$.  Thus, the original theorems are recovered.
For $A\ne I$, division of the displayed bounds by $\|\log(A)\|_2$
gives relative-error bounds, but their numerical evaluation requires
spectral information and an upper bound for $\kappa(V)$.
The diagonalization is used only in the analysis; evaluating the DE
formula does not require computing $V$ or $D$.

Computing a full eigendecomposition solely to evaluate these bounds
may defeat the computational motivation for quadrature; their primary
purpose here is to establish convergence beyond the Hermitian case.
\end{remark}

\subsection{Practical algorithms: norm-free adaptive and error-bound-based methods}
\label{sec:adaptive-new}
We now describe a norm-free adaptive implementation and a direct
implementation with error-bound-based parameter selection.
For a positive integer $n$, define
\begin{equation}
S_n=\frac{1}{n}\sum_{k=-N}^{N}F_{\mathrm{DE}}\left(\frac{k}{n}\right),
\qquad
N=\left\lceil n\arsinh(\pi n)\right\rceil.
\label{eq:new-adaptive-quadrature-sum}
\end{equation}
In the adaptive implementation, unlike the method in
Section~\ref{sec:adaptive-Tatsuoka}, the mesh is refined and the
truncation interval is enlarged simultaneously.
The parameter $n$ is the reciprocal of the mesh size; the number of
quadrature nodes is $2N+1$.

\paragraph{Evaluation of the integrand}
To avoid exponentially large coefficients in the shifted matrices,
we rewrite the expression in~\eqref{eq:FDE-x} according to the sign of
$x$.  For $x\geq0$, the factor $\E^{-\pi\sinh x}$ is at most one,
whereas for $x<0$ it grows without bound as $x\to-\infty$.
In the latter case, factoring $\E^{-\pi\sinh x}$ out of the shifted
matrix gives an equivalent expression involving $\E^{\pi\sinh x}$,
which is at most one.  Thus, we use
\[
F_{\mathrm{DE}}(x)=
\begin{cases}
(A-I)\left(A+\E^{-\pi\sinh x}I\right)^{-1}
\displaystyle\frac{\pi\cosh x}{1+\E^{\pi\sinh x}}
& (x\geq0),\\[2ex]
(A-I)\left(I+\E^{\pi\sinh x}A\right)^{-1}
\displaystyle\frac{\pi\cosh x}{1+\E^{-\pi\sinh x}}
& (x<0).
\end{cases}
\]
Since $A-I$ commutes with both shifted matrices and their inverses,
these expressions can be evaluated by solving linear systems rather
than explicitly forming inverses.  Specifically, put
$\psi(x)=\E^{-\pi|\sinh x|}$ and, at each quadrature node, solve
\[
\begin{cases}
\left(A+\psi(x)I\right)X(x)=A-I & (x\geq0),\\[1ex]
\left(I+\psi(x)A\right)X(x)=A-I & (x<0).
\end{cases}
\]
Then evaluate
\[
F_{\mathrm{DE}}(x)=\frac{\pi(\cosh x)\psi(x)}{1+\psi(x)}X(x).
\]
The scalar weight is also expressed in terms of $\psi(x)$, so no positive
arguments need to be evaluated in the exponential function.

\paragraph{Reuse of previous evaluations}
When $n$ is doubled, put
\[
N'=\left\lceil 2n\arsinh(2\pi n)\right\rceil.
\]
Since $N'\geq2N$ for $n\geq1$, all previously evaluated nodes remain
in the new quadrature.  Splitting the central sum into even and odd
indices gives the update formula
\begin{align}
S_{2n}
&=\frac12 S_n
+\frac{1}{2n}\sum_{j=-N+1}^{N}
F_{\mathrm{DE}}\left(\frac{2j-1}{2n}\right)
+\frac{1}{2n}\left\{
\sum_{k=-N'}^{-2N-1}F_{\mathrm{DE}}\left(\frac{k}{2n}\right)
+\sum_{k=2N+1}^{N'}F_{\mathrm{DE}}\left(\frac{k}{2n}\right)
\right\}.
\label{eq:new-adaptive-update}
\end{align}
Thus, only the odd-indexed nodes in the old interval and the nodes in
the two newly added tails need to be evaluated.  If each new node is
evaluated once, the total number of integrand evaluations up to a
fixed final value of $n$ is $2N+1$, the same as for a direct computation
at that value.  The adaptive implementation does, however, require
additional summation and stopping tests, and doubling may yield a
larger final $n$ than a directly selected value.

\paragraph{A norm-free stopping criterion}
Let $\widetilde S_n$ denote the approximation computed in
floating-point arithmetic.  For practical computations without
spectral information or matrix norm evaluations, we use successive
approximations as a convergence indicator.  Given an absolute tolerance
$\varepsilon_{\mathrm{abs}}>0$ and a relative tolerance
$\varepsilon_{\mathrm{rel}}>0$, the indicator is satisfied when
\begin{equation}
\left|(\widetilde S_{2n}-\widetilde S_n)_{ij}\right|
\leq\varepsilon_{\mathrm{abs}}
+\varepsilon_{\mathrm{rel}}\left|(\widetilde S_{2n})_{ij}\right|
\qquad\text{for every }i,j.
\label{eq:new-adaptive-stopping}
\end{equation}
The absolute tolerance accommodates entries whose values are close to
zero.  This test uses only entrywise comparisons and does not require
$\|\log(A)\|_2$ or any matrix norm.  Its tolerances control the change
between successive approximations, not a rigorously bounded error in
the spectral norm.  In particular, a small change can result from
rounding-error stagnation, so this criterion alone does not guarantee
the requested accuracy.  We do not apply a fixed Richardson factor to
the difference, since both the mesh size and the truncation interval
change during refinement.

\paragraph{Parameter selection based on the error bound}
For Hermitian positive definite $A\ne I$, spectral information allows
the quadrature parameter to be selected before evaluating the integrand.  With a fixed $q\in(0,1)$, put
\begin{equation*}
\alpha=2\pi q d_*(A),
\qquad
C=2\left\{\frac{1}{\gamma_{q d_*(A)}}+\E^{2q\cos(d_*(A))}\right\},
\qquad
E_n=C\frac{\E^{-\alpha n}}{1-\E^{-\alpha n}}.
\end{equation*}
Theorem~\ref{thm:DE_logA_error_simple} gives
\begin{equation*}
\frac{\|\log(A)-S_n\|_2}{\|\log(A)\|_2}\leq E_n.
\end{equation*}
Hence, $E_n\leq\varepsilon_{\mathrm{rel}}$ is a sufficient condition
for achieving the relative quadrature-error target in exact arithmetic.
Equivalently, it suffices to choose
\begin{equation*}
n\geq n_{\mathrm{req}}:=\max\left\{1,
\left\lceil\frac{1}{\alpha}
\log\left(1+\frac{C}{\varepsilon_{\mathrm{rel}}}\right)
\right\rceil\right\}.
\end{equation*}
Let $M_{\max}$ denote the maximum number of quadrature nodes allowed (the
node limit). Since $S_n$ uses $2\lceil n\arsinh(\pi n)\rceil+1$ nodes,
the node limit imposes $\lceil n\arsinh(\pi n)\rceil\leq K$, where
$K=\lfloor(M_{\max}-1)/2\rfloor$. Let $n_{\max}$ denote the largest
integer satisfying this condition. Rather than numerically solving
this inequality for its largest admissible integer,
Algorithm~\ref{alg:new-adaptive-error-bound} uses the explicit safe
estimate
\[
n'_{\max}=\max\left\{1,
\left\lfloor\frac{K}{\arsinh(\pi K)}\right\rfloor\right\}.
\]
For $K\geq2$, this estimate satisfies $n'_{\max}\leq n_{\max}$:
when the floor is positive, $n'_{\max}\leq K$ and hence
\[
n'_{\max}\arsinh(\pi n'_{\max})
\leq n'_{\max}\arsinh(\pi K)\leq K;
\]
when the floor is zero, $n'_{\max}=1$ and
$\arsinh(\pi)<2\leq K$.  Thus the selected quadrature always respects
the node limit ($M_{\max}$), although this estimate need not use all available
nodes.  The algorithm sets
$n_{\mathrm{stop}}=\min\{n_{\mathrm{req}},n'_{\max}\}$ and computes
$S_{n_{\mathrm{stop}}}$ directly using
\eqref{eq:new-adaptive-quadrature-sum}.  If
$n_{\mathrm{req}}>n'_{\max}$, it returns the approximation at
$n'_{\max}$ with a conservative-limit warning; the error-bound target
is then not certified by this parameter choice, even though a larger
admissible $n$ may exist.
This direct implementation requires no successive-approximation tests
or intermediate quadrature sums and avoids rounding $n_{\mathrm{req}}$
up to a member of a doubling sequence.  Consequently, it may use fewer
quadrature nodes than successive doubling to meet the same error bound.
All quadrature nodes are determined before integrand evaluation, so
their evaluations can be scheduled independently.
The constants require spectral information through
$L=\|\log(A)\|_2$, $d_*(A)$, and $\gamma_{q d_*(A)}$;
these can be evaluated from the extreme eigenvalues of $A$ without
forming $\log(A)$, as noted in
Remark~\ref{rem:L-from-extreme-eigenvalues}.
Thus, this parameter-selection strategy is an optional alternative,
not part of the norm-free implementation.  Theorem~\ref{thm:DE_logA_error} provides
the corresponding bound in terms of $\kappa(A)$ when
$\lambda_{\min}\lambda_{\max}=1$.

Table~\ref{tab:adaptive-information} summarizes the additional matrix
information required by the three implementations, apart from the
matrix $A$ used in integrand evaluations. The error-bound algorithm
listed here uses the Hermitian positive definite bound; none of the three
implementations requires forming $\log(A)$ beforehand.

\begin{table}[tbp]
\centering
\caption{Matrix information required for preprocessing, parameter selection, and stopping tests.}
\label{tab:adaptive-information}
\begin{tabular}{@{}p{0.24\textwidth}p{0.35\textwidth}p{0.34\textwidth}@{}}
\toprule
\raggedright Method & \raggedright Matrix information for preprocessing & \raggedright Parameter selection or stopping test \tabularnewline
\midrule
\raggedright Tatsuoka et al.\ (Algorithm~\ref{alg:adaptive-Tatsuoka})
& \raggedright $\|A-I\|_2$, $\|A^{-1}\|_2$, and $\rho(A)$
& \raggedright $\|\widehat S_{2n}-\widehat S_n\|_2$ \tabularnewline[1ex]
\raggedright Proposed norm-free method (Algorithm~\ref{alg:new-adaptive})
& \raggedright No matrix norms or spectral information
& \raggedright Entrywise comparisons of successive approximations \tabularnewline[1ex]
\raggedright Proposed error-bound method (Algorithm~\ref{alg:new-adaptive-error-bound})
& \raggedright $\lambda_{\min}$ and $\lambda_{\max}$ (Hermitian positive definite $A$)
& \raggedright A priori selection of $n$ from $E_n$; no refinement tests \tabularnewline
\bottomrule
\end{tabular}
\end{table}

\paragraph{Computational procedures and finite-precision limitations}
Algorithms~\ref{alg:new-adaptive} and~\ref{alg:new-adaptive-error-bound}
summarize the norm-free adaptive and error-bound-based direct
implementations, respectively, and assume $A\ne I$. Users should handle
the case $A=I$ separately by returning the zero matrix, which is the
exact value of $\log(I)$.
For simplicity, the pseudocode denotes the computed approximations by
$S_n$ rather than $\widetilde S_n$; the distinction is retained in the
discussion of the norm-free stopping criterion above and in the
finite-precision error analysis below.

\begin{algorithm}[tbp]
\caption{Norm-free adaptive algorithm based on the DE formula with reuse of previous evaluations}
\label{alg:new-adaptive}
\begin{algorithmic}[1]
\Require Matrix $A\ne I$ with no eigenvalues on $(-\infty,0]$.
\Require Positive integer $M_{\max}\geq 5$;
 tolerances $\varepsilon_{\mathrm{abs}},\varepsilon_{\mathrm{rel}}>0$.
\Ensure Approximation to $\log(A)$ and a termination status.
\State $n\gets 1$
\State $N\gets \lceil n\arsinh(\pi n)\rceil$
\State Compute $S_1$ using~\eqref{eq:new-adaptive-quadrature-sum}.
\State $N'\gets \lceil 2n\arsinh(2\pi n)\rceil$
\While{$2N'+1\leq M_{\max}$}
  \State Compute $S_{2n}$ using~\eqref{eq:new-adaptive-update}.
  \If{\eqref{eq:new-adaptive-stopping} holds}
    \State \Return $(S_{2n},\text{successive-approximation indicator satisfied})$
  \EndIf
  \State $n\gets 2n$
  \State $N \gets N'$
  \State $N'\gets \lceil 2n\arsinh(2\pi n)\rceil$
\EndWhile
\State \Return $(\text{last computed approximation},\text{refinement-limit warning})$
\end{algorithmic}
\end{algorithm}

\begin{algorithm}[tbp]
\caption{Algorithm based on the DE formula with error-bound-based parameter selection and direct computation}
\label{alg:new-adaptive-error-bound}
\begin{algorithmic}[1]
\Require Hermitian positive definite matrix $A\ne I$.
\Require Positive integer $M_{\max}\geq 5$;
 relative tolerance $\varepsilon_{\mathrm{rel}}>0$.
\Ensure Approximation to $\log(A)$ and a termination status.
\State Compute the extreme eigenvalues $\lambda_{\min}$ and $\lambda_{\max}$ of $A$.
\State $L\gets \max\{\log(1/\lambda_{\min}), \log(\lambda_{\max})\}$
\State $d_*(A)\gets \arccos(\sqrt{2/(1+\sqrt{1+(2\pi/L)^2})})$
\State $q\gets 0.95$, $y\gets q d_*(A)$
\State $x_{L,y}\gets\arsinh(L/(\pi\cos y))$
\State $T_{L,y}\gets(\pi\cos y/\tan y - L)/2$
\State $\tau_{L,y}\gets T_{L,y}/(1 + \sqrt{T_{L,y}})$
\State $a_{L,y}\gets (\pi/2)\sin y\sqrt{1+\{\sinh x_{L,y}+2\tau_{L,y}/(\pi\cos y)\}^2}$
\State $\gamma_y\gets\cos\left((\pi/2)\sin y\right)\cos(a_{L,y})\cos y$
\State $\alpha\gets2\pi y$, $C\gets2\{(1/\gamma_y)+\exp(2q\cos(d_*(A)))\}$
\State $n_{\mathrm{req}}\gets\max\{1,\lceil\log(1+C/\varepsilon_{\mathrm{rel}})/\alpha\rceil\}$
\State $K\gets \lfloor(M_{\max}-1)/2\rfloor$, $n'_{\max} \gets \max\{1,\lfloor K/\arsinh(\pi K)\rfloor\}$
\State $n_{\mathrm{stop}}\gets\min\{n_{\mathrm{req}},n'_{\max}\}$
\If{$n_{\mathrm{req}}\leq n'_{\max}$}
  \State $\mathrm{status}\gets\text{quadrature-error target satisfied}$
\Else
  \State $\mathrm{status}\gets\text{conservative-limit warning}$
\EndIf
\State Compute $S_{n_{\mathrm{stop}}}$ directly using~\eqref{eq:new-adaptive-quadrature-sum}.
\State \Return $(S_{n_{\mathrm{stop}}},\mathrm{status})$
\end{algorithmic}
\end{algorithm}
For the error-bound implementation, the guarantee concerns $S_n$,
not automatically $\widetilde S_n$.  Indeed,
\[
\frac{\|\log(A)-\widetilde S_n\|_2}{\|\log(A)\|_2}
\leq E_n+
\frac{\|S_n-\widetilde S_n\|_2}{\|\log(A)\|_2}.
\]
The second term includes errors in integrand evaluation and summation,
and is not bounded by the present analysis.  A fully certified
floating-point result would additionally require a rigorous bound for
this term and validated evaluation of the constants in $E_n$.
Increasing $n$ controls the quadrature bound but cannot alone certify
or eliminate these finite-precision errors.

\subsection{Proof of Lemma~\ref{lem:bound-None}}
\label{sec:proof-essential-lemma}

Analyticity and finiteness of
$\mathcal{N}(f_{\mathrm{DE}}(\cdot,\lambda),d)$ for
$0<d<d_0(\lambda)$ follow from Proposition~\ref{prop:analytic_strip}
and the derivation of~\eqref{eq:4-23}. We now obtain an explicit,
computable bound for this quantity. By~\eqref{eq:scalar-fDE},
\begin{align*}
|f_{\mathrm{DE}}(x+\I y,\lambda)|
= \frac{|\lambda - 1|\pi|\cosh (x+\I y)|}{|\lambda + \E^{-\pi\sinh(x+\I y)}||1 + \E^{\pi\sinh(x+\I y)}|}
\leq \frac{|\lambda - 1|\pi\cosh x}{|\lambda + \E^{-\pi\sinh(x+\I y)}||1 + \E^{\pi\sinh(x+\I y)}|}.
\end{align*}
The remaining terms are
$|\lambda + \E^{-\pi\sinh(x+\I y)}|$
and $|1 + \E^{\pi\sinh(x+\I y)}|$
in the denominator.
For the second denominator,
the following lemma provides the desired estimate.

\begin{lemma}[Okayama et al.~{\cite[Lemma 4.22]{Okayama-et-al}}]
\label{Lem:Bound-DEt-essential}
Let $x$ and $y$ be real numbers with $|y|<\pi/2$.
Then,
\begin{align}
\label{phi-div-denominator-plus}
     \left|
      \frac{1}{1+\E^{\pi\sinh(x + \I y)}}
     \right|
&\leq\frac{1}{(1+\E^{\pi\sinh(x)\cos y})\cos((\pi/2)\sin y)}.
\end{align}
\end{lemma}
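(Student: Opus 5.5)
The plan is to bound $|1+\E^{\pi\sinh(x+\I y)}|$ from below directly, by writing the exponent in real and imaginary parts. Since
\[
\sinh(x+\I y)=\sinh x\cos y+\I\cosh x\sin y,
\]
we get $\E^{\pi\sinh(x+\I y)}=\E^{u}\E^{\I v}$ with $u=\pi\sinh x\cos y$ and $v=\pi\cosh x\sin y$. Then
\[
|1+\E^{u+\I v}|^2=1+2\E^{u}\cos v+\E^{2u}.
\]
The difficulty is that $v$ is unbounded as $|x|\to\infty$, so $\cos v$ can be negative. A naive bound $|1+\E^{u+\I v}|\ge|1-\E^u|$ fails near $u=0$. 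The estimate must therefore exploit the coupling between $u$ and $v$: when $|u|$ is small, $x$ is small, so $v\approx\pi\sin y<\pi/2$.

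First I would reduce by symmetry. Replacing $x$ by $-x$ changes $u\mapsto -u$ and leaves $v$ unchanged, and $|1+\E^{-u+\I v}|=\E^{-u}|1+\E^{u-\I v}|$. Both sides of the target inequality transform the same way, since $(1+\E^{-u})=\E^{-u}(1+\E^{u})$. Replacing $y$ by $-y$ merely conjugates. So it suffices to treat $x\ge0$ and $0\le y<\pi/2$.

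The cleanest route is probably a factorization. Write
\[
1+\E^{w}=2\E^{w/2}\cosh(w/2),\qquad w=u+\I v,
\]
so that $|1+\E^w|=2\E^{u/2}|\cosh(w/2)|$, and similarly $1+\E^u=2\E^{u/2}\cosh(u/2)$. The claim then becomes
\[
|\cosh(\tfrac{u}{2}+\I\tfrac{v}{2})|\ge\cosh(\tfrac u2)\cos(\tfrac{\pi}{2}\sin y).
\]
Now use $|\cosh(a+\I b)|^2=\sinh^2a+\cos^2b=\cosh^2a-\sin^2 b$. It is enough to show
\[
\cosh^2(u/2)-\sin^2(v/2)\ge\cosh^2(u/2)\cos^2(s),\qquad s=\tfrac{\pi}{2}\sin y,
\]
that is, $\sin^2(v/2)\le\cosh^2(u/2)\sin^2 s$. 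Since $\sin^2(v/2)\le1$, this certainly holds whenever $\cosh(u/2)\sin s\ge1$.

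The key step, and the main obstacle, is the complementary regime. Here I would show $|\sin(v/2)|\le\sin(s)\cosh(u/2)$ with $v/2=s\cosh x$ and $u/2=(\pi/2)\cos y\sinh x$. One convenient sufficient inequality is the following chain, which reduces to $\cosh x\le 1+\sinh^2(\cdot)$-type comparisons:
\[
|\sin(s\cosh x)|\le\sin(s)\cosh(s\sinh x\cdot\text{const}).
\]
The concrete plan is to use the product formula
\[
\sin(s\cosh x)=\sin s\cos(s(\cosh x-1))+\cos s\sin(s(\cosh x-1)),
\]
together with $s(\cosh x-1)\le s\sinh^2x/1$ and $\tan s\ge s$. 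This bounds $|\sin(s\cosh x)|/\sin s$ by $1+ (\cosh x-1)$-type terms, which are dominated by $\cosh(u/2)$ because $u/2\ge (\pi/2)\cos y\sinh x$ and $\cos y$ is related to $s$ through $\sin y=2s/\pi$.

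If this elementary comparison becomes messy, I would instead argue by the maximum principle. Consider the function $g(x)=\cosh^2(u/2)\sin^2 s-\sin^2(v/2)$: it is nonnegative at $x=0$, where it equals $\sin^2 s-\sin^2 s=0$, and it remains nonnegative for large $x$, where $\cosh(u/2)\sin s\ge1$. In between, I would check $g\ge0$ by showing that the derivative in $x$ of $\log\cosh(u/2)$ dominates that of $\log|\sin(v/2)|$ wherever the latter is positive. This is the step I expect to need the most care. Since this is a cited lemma, the argument can also simply defer to the proof in \cite{Okayama-et-al}.
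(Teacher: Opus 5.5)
Your reduction is correct. With $s=\frac{\pi}{2}\sin y$ and $t=\frac{\pi}{2}\cos y$ (so $s^2+t^2=\pi^2/4$), the factorization $1+\E^{w}=2\E^{w/2}\cosh(w/2)$ and the identity $|\cosh(a+\I b)|^2=\cosh^2a-\sin^2b$ turn the claim into $|\sin(s\cosh x)|\le\sin s\,\cosh(t\sinh x)$. The paper does not prove this lemma; it imports it from Okayama et al., so the only question is whether your argument closes. It does not: the step you flag as the main obstacle is never carried out, and the route you describe fails.

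Your addition formula gives $|\sin(s\cosh x)|\le\sin s+s\cos s\,(\cosh x-1)$. Then $\tan s\ge s$ yields only $|\sin(s\cosh x)|\le\sin s\cosh x$. But $\cosh x\le\cosh(t\sinh x)$ is false for small $x\ne0$ whenever $t<1$, i.e.\ $\cos y<2/\pi$. For example, $y=\pi/3$, $x=1/2$ gives $\cosh\frac12\approx1.128>\cosh(\frac{\pi}{4}\sinh\frac12)\approx1.085$. Saying these terms are dominated because $\cos y$ is related to $s$ hides exactly the missing idea. Comparing $x^2$ coefficients at $x=0$ shows what is needed: $s\cot s\le t^2=\frac{\pi^2}{4}-s^2$. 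This condition is forced by the inequality itself near $x=0$, and $\tan s\ge s$ (i.e.\ $s\cot s\le1$) does not imply it. Your fallback derivative comparison meets the same condition at $x=0$, where your $g$ vanishes, and leaves the intermediate range of $x$ unhandled. Deferring to the citation is not a proof.

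The gap closes with one more elementary inequality. For $0<s<\pi/2$, use $\cosh x-1=2\sinh^2(x/2)$, $\sinh w\ge w$ for $w\ge0$, and $\sinh^2x\ge4\sinh^2(x/2)$ to get
\[
|\sin(s\cosh x)|\le\sin s\left(1+2s\cot s\,\sinh^2\frac{x}{2}\right),
\qquad
\cosh(t\sinh x)=1+2\sinh^2\left(\frac{t}{2}\sinh x\right)\ge1+2t^2\sinh^2\frac{x}{2}.
\]
So it suffices that $g(s)=s\cot s+s^2\le\pi^2/4$ on $(0,\pi/2)$. This holds because $g'(s)=(\sin2s-2s\cos2s)/(2\sin^2s)\ge0$, since $\sin u\ge u\cos u$ on $[0,\pi]$, and $g(s)\to\pi^2/4$ as $s\to\pi/2$. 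The case $s=0$ is trivial. With this, your case split on $\cosh(u/2)\sin s\ge1$ and your symmetry reduction are unnecessary, since the squared inequality $\sin^2(s\cosh x)\le\sin^2s\cosh^2(t\sinh x)$ is even in both $x$ and $y$.
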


For the first denominator, note that
\begin{align*}
\frac{1}{|\lambda + \E^{-\pi\sinh(x+\I y)}|}
=\frac{1}{|\lambda||1 + \E^{-\log(\lambda)-\pi\sinh(x+\I y)}|}
&=\frac{1}{|\lambda||1 + \E^{-|\log(\lambda)|-\pi\sinh(x+\I y)}|}
\intertext{holds for $\lambda\geq 1$, and}
\frac{1}{|\lambda + \E^{-\pi\sinh(x+\I y)}|}
=\frac{|\E^{\pi\sinh(x+\I y)}|}{|\lambda \E^{\pi\sinh(x+\I y)} + 1|}
\leq \frac{\E^{\pi\sinh(x)\cos y}}{|\E^{\log(\lambda)+\pi\sinh(x+\I y)} + 1|}
&=\frac{\E^{\pi\sinh(x)\cos y}}{|1 + \E^{-|\log(\lambda)|-\pi\sinh(-(x+\I y))}|}
\end{align*}
holds for $0<\lambda<1$.
Therefore, for both cases,
we should estimate
$|1 + \E^{-|\log(\lambda)|-\pi\sinh\zeta}|$
for $\zeta\in\domD_d$ with some $d>0$.
For this purpose,
the following result is useful.
\begin{lemma}[Okayama~{\cite[Lemma 4.12]{OkayamaDEunilateral}}]
\label{lem:important-estimate-general-L}
Let $L$ be a positive constant, and let $d_L$ be a positive
constant defined by
\begin{equation}
d_L = \arccos\left(\sqrt{\frac{2}{1+\sqrt{1+(2\pi/L)^2}}}\right).
\label{eq:d_L}
\end{equation}
For each $y\in(0,d_L)$,
let $x_{L,y}$, $T_{L,y}$, $\tau_{L,y}$, and
$a_{L,y}$ be defined
by~\eqref{eq:x-L-y}--\eqref{eq:a-L-y}, respectively.
Then, for all $x\in\mathbb{R}$ and $y\in(-d_L,d_L)$,
\begin{equation*}
 \frac{1}{|1+\E^{-L-\pi\sinh(x+\I y)}|}
 \leq
 \frac{1}
 {(1+\E^{-L-\pi\sinh(x)\cos y})c_{L,y}},
\end{equation*}
where
\[
 c_{L,y}=
 \begin{cases}
 1
 & (y = 0),\\[2ex]
 \cos a_{L,|y|}
 & (0<|y|<d_L).
 \end{cases}
\]
\end{lemma}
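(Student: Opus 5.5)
The plan is to reduce the bound to an elementary inequality for $|1+\E^{w}|$ with $w=u+\I v$, and then split the real line in $x$ into a central region and two tails. First, the case $y=0$ is immediate: $w$ is real, so $|1+\E^{w}|=1+\E^{w}$ and $c_{L,0}=1$. For $y<0$, the identity $\overline{\sinh(x+\I y)}=\sinh(x-\I y)$ shows that $|1+\E^{-L-\pi\sinh(x+\I y)}|$ is even in $y$, so it suffices to treat $0<y<d_L$. Write
\[
w=-L-\pi\sinh(x+\I y)=u+\I v,\qquad u=-L-\pi\sinh x\cos y,\qquad v=-\pi\cosh x\sin y.
\]
The target inequality is then $|1+\E^{w}|\ge(1+\E^{u})\cos a_{L,y}$.

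\emph{Step 1 (central region).} From
\[
|1+\E^{u+\I v}|^2=(1+\E^{u})^2-4\E^{u}\sin^2(v/2)
\]
and $4\E^{u}\le(1+\E^{u})^2$, I obtain $|1+\E^{w}|\ge(1+\E^{u})|\cos(v/2)|$ for all real $u,v$. Here $|v|/2=(\pi/2)\sin y\cosh x=(\pi/2)\sin y\sqrt{1+\sinh^2x}$. Put $s_*=\sinh x_{L,y}+2\tau_{L,y}/(\pi\cos y)$. By the definition \eqref{eq:a-L-y}, whenever $|\sinh x|\le s_*$ we have $|v|/2\le a_{L,y}$. Provided $a_{L,y}<\pi/2$ (this is exactly where $y<d_L$ must be used), this gives $|\cos(v/2)|\ge\cos a_{L,y}$, and the claim follows on this region.

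\emph{Step 2 (tails).} If $\sinh x\ge s_*$, then $\pi\cos y\sinh x\ge L+2\tau_{L,y}$, because $\pi\cos y\sinh x_{L,y}=L$. Hence $u\le-2L-2\tau_{L,y}\le-2\tau_{L,y}$, and
\[
\frac{|1+\E^{w}|}{1+\E^{u}}\ge\frac{1-\E^{u}}{1+\E^{u}}=\tanh\Bigl(\frac{-u}{2}\Bigr)\ge\tanh\tau_{L,y}.
\]
If $\sinh x\le-s_*$, then $u\ge -L+L+2\tau_{L,y}=2\tau_{L,y}$. In this case $|1+\E^{w}|\ge\E^{u}-1$, so the ratio is at least $\tanh(u/2)\ge\tanh\tau_{L,y}$. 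In both tails it therefore remains to show that
\[
\tanh\tau_{L,y}\ge\cos a_{L,y}.
\]

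\emph{Step 3 (main obstacle).} The heart of the proof is the inequality $\tanh\tau_{L,y}\ge\cos a_{L,y}$, together with $0<a_{L,y}<\pi/2$, for $0<y<d_L$; the choices of $T_{L,y}$ and $\tau_{L,y}=T/(1+\sqrt T)$ are evidently engineered for it. I would first use the identity from the proof of Theorem~\ref{thm:our-discretization-error}: with $r=\sqrt{T_{L,y}}$ one has $L=\pi\cos^2y/\sin y-2r^2$ and
\[
s_*=\cot y-\frac{2r^3}{\pi\cos y(1+r)}.
\]
This gives $\sqrt{1+s_*^2}<1/\sin y$, hence $a_{L,y}<\pi/2$, and, more quantitatively,
\[
\frac{\pi}{2}-a_{L,y}\ge c\,\frac{r^3}{1+r}\cdot\frac{\sin y}{\cos y}
\]
for an explicit constant $c$. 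I would then compare the two sides: $\cos a_{L,y}=\sin(\pi/2-a_{L,y})$ is bounded above by a multiple of $\tan y\,r^3/(1+r)$, whereas $\tanh(r^2/(1+r))$ is bounded below using $\tanh t\ge t/(1+t)$. Verifying the resulting one-variable inequality in $r$, uniformly in $y$, is the step I expect to be delicate. If the direct comparison fails near $y\to d_L$, where $r\to0$, I would instead refine Step 1 near the junction points $|\sinh x|=s_*$ by also exploiting $|u|\ge2\tau$ there, that is, by using the exact expression for $|1+\E^{w}|^2$ rather than the crude bound $4\E^{u}\le(1+\E^{u})^2$.
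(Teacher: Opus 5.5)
The paper imports this lemma from Okayama's earlier work without proof, so I assess your argument on its own merits. Steps 1 and 2 are correct. This covers the reduction to $0<y<d_L$, the identity for $|1+\E^{u+\I v}|^2$, and the central estimate $|\cos(v/2)|\ge\cos a_{L,y}$ for $|\sinh x|\le s_*$; there, $a_{L,y}<\pi/2$ follows at once from $0<s_*<\cot y$. It also covers the two tail estimates by $\tanh\tau_{L,y}$. The lemma is thereby reduced to $\tanh\tau_{L,y}\ge\cos a_{L,y}$. Once the splitting is fixed, that inequality is the entire content of the lemma, and you leave it unproved. Worse, the route you sketch for it cannot work.

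To see this, take $T_{L,y}=9$ (so $r=3$ and $\tau_{L,y}=9/4$) and let $L\to0^+$. This forces $\sin y\to(\sqrt{81+\pi^2}-9)/\pi\approx0.1695$. Then $\cos a_{L,y}\approx0.892$ and $\pi/2-a_{L,y}\approx1.10$, whereas $\tanh\tau_{L,y}\approx0.978$ and $\tau_{L,y}/(1+\tau_{L,y})\approx0.692$.

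So any chain passing through $\cos a\le\pi/2-a$ already exceeds $\tanh\tau$. The bound $\tanh t\ge t/(1+t)$ also falls below the true value of $\cos a$. No constant in your bound $\cos a\lesssim\tan y\,r^3/(1+r)$ can close the comparison: at this point, validity of the bound needs its right-hand side to be at least $0.892$, while the comparison needs it to be at most $0.692$.

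The delicate regime is therefore large $T_{L,y}$ (small $y$), not $r\to0$. Near $r\to0$ the comparison is easy: $\tanh\tau\sim r^2$, while $\cos a\le r^3\sin y/(1+r)$. Your fallback aims at the wrong end. Your displayed inequality also bounds $\pi/2-a$ from below, whereas you need an upper bound. In fact, a lower bound of that form with a $y$-independent $c$ is false as $y\to\pi/2$.

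One way to close the gap is to keep the sine. Put $\sigma=T_{L,y}-\tau_{L,y}=r^3/(1+r)$, so that $\cot y-s_*=2\sigma/(\pi\cos y)$. Then $\frac{\pi}{2}-a_{L,y}=\frac{\pi}{2}\sin y\,(\cot y-s_*)\,\frac{\cot y+s_*}{\csc y+\sqrt{1+s_*^2}}\le\sigma\sin y$, because the last fraction is at most $\cos y$. Hence $\cos a_{L,y}\le\sin(\sigma\sin y)$.

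The condition $L>0$ is equivalent to $r^2\sin y<\frac{\pi}{2}\cos^2y$. So for fixed $r$ the worst case is $L\to0^+$, where $\sigma\sin y=\frac{\pi}{2}\cos^2y\,\frac{r}{1+r}<\frac{\pi}{2}$. Step 3 then reduces to the one-variable inequality $\sin(\frac{\pi}{2}(1-s_r^2)\frac{r}{1+r})\le\tanh\frac{r^2}{1+r}$ for all $r>0$, with $s_r=(\sqrt{r^4+\pi^2}-r^2)/\pi$. Spot checks and the asymptotics at both ends indicate that it holds, with a margin that decays like $\pi^2/(8(1+r)^2)$ as $r\to\infty$. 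It still has to be verified, for instance by treating small $r$, large $r$, and a compact middle range separately.
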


We apply this lemma with $L=|\log\lambda|$ to prove
Lemma~\ref{lem:bound-None}.
\begin{proof}[Lemma~\ref{lem:bound-None}]
If $\lambda=1$, then $f_{\mathrm{DE}}(\cdot,1)=0$, and the result is
immediate.  We therefore assume that $\lambda\ne1$, so that $L>0$.
Equations~\eqref{eq:d0-simple} and~\eqref{eq:d_L} give
$d_0(\lambda)=d_L$, so $0<d<d_L$.  Proposition~\ref{prop:analytic_strip} shows that
$f_{\mathrm{DE}}(\cdot,\lambda)$ is analytic in $\domD_d$.
Moreover, the double-exponential decay used in the derivation
of~\eqref{eq:4-23} is uniform on every closed substrip of
$\domD_{d_0(\lambda)}$.  Hence,
\[
 \lim_{x\to\pm\infty}
 \int_{-d}^{d}|f_{\mathrm{DE}}(x+\I y,\lambda)|\D y=0.
\]
It remains to estimate
$\mathcal{N}(f_{\mathrm{DE}}(\cdot,\lambda),d)$.

Fix $\eta\in(0,d)$ and put
\[
 s=\pi\sinh x\cos\eta,
 \qquad
 \gamma_\eta
 =\cos\left(\frac{\pi}{2}\sin\eta\right)
 \cos(a_{L,\eta})\cos\eta.
\]
All the factors in $\gamma_\eta$ are positive because
$0<\eta<d<d_L\leq\pi/2$.
First, suppose that $\lambda>1$.  Then, $L=\log\lambda$, and
Lemmas~\ref{Lem:Bound-DEt-essential}
and~\ref{lem:important-estimate-general-L} give,
for both $x+\I\eta$ and $x-\I\eta$,
\begin{align*}
 |f_{\mathrm{DE}}(x\mathbin{\pm}\I\eta,\lambda)|
 &\leq
 \frac{|\lambda-1|\pi\cosh x}
 {\lambda
 \cos((\pi/2)\sin\eta)\cos(a_{L,\eta})}
 \cdot
 \frac{1}
 {(1+\E^{-L-s})(1+\E^s)}.
\end{align*}
Using the change of variables $u=\E^s$, we obtain
\begin{align*}
 \int_{-\infty}^{\infty}
 |f_{\mathrm{DE}}(x\mathbin{\pm}\I\eta,\lambda)|\D x
 &\leq
 \frac{|\lambda-1|}{\lambda\gamma_\eta}
 \int_0^\infty
 \frac{\D u}{(u+\E^{-L})(u+1)}
 =\frac{|\lambda-1|}{\lambda\gamma_\eta}\cdot
 \frac{L}{1-\E^{-L}}
 =\frac{L}{\gamma_\eta}.
\end{align*}

Next, suppose that $0<\lambda<1$.  Then, $L=-\log\lambda$ and
$\E^{-L}=\lambda$.  Using the estimate derived above for the first
denominator with $\zeta=-(x\mathbin{\pm}\I\eta)$, together with
Lemma~\ref{Lem:Bound-DEt-essential}, we have
\begin{align*}
 |f_{\mathrm{DE}}(x\mathbin{\pm}\I\eta,\lambda)|
 &\leq
 \frac{|\lambda-1|\pi\cosh x}
 {\cos((\pi/2)\sin\eta)\cos(a_{L,\eta})}\cdot
 \frac{\E^s}{(1+\lambda\E^s)(1+\E^s)}.
\end{align*}
The same change of variables gives
\begin{align*}
 \int_{-\infty}^{\infty}
 |f_{\mathrm{DE}}(x\mathbin{\pm}\I\eta,\lambda)|\D x
 &\leq
 \frac{1-\lambda}{\gamma_\eta}
 \int_0^\infty
 \frac{\D u}{(1+\lambda u)(1+u)}
 =\frac{1-\lambda}{\gamma_\eta}\cdot
 \frac{L}{1-\lambda}
 =\frac{L}{\gamma_\eta}.
\end{align*}
Thus, in both cases,
\[
 \int_{-\infty}^{\infty}
 \left\{
 |f_{\mathrm{DE}}(x+\I\eta,\lambda)|
 +|f_{\mathrm{DE}}(x-\I\eta,\lambda)|
 \right\}\D x
 \leq\frac{2L}{\gamma_\eta}.
\]
Letting $\eta\to d-0$ and using the continuity and positivity of
$\gamma_\eta$ on $(0,d_L)$, we conclude that
\begin{align*}
 \mathcal{N}(f_{\mathrm{DE}}(\cdot,\lambda),d)
 &\leq\frac{2L}{\gamma_d}.
\end{align*}
Therefore,
$f_{\mathrm{DE}}(\cdot,\lambda)\in\mathbf{B}(\domD_d)$, and the
claimed estimate follows.
\end{proof}

\section{Numerical examples}
\label{sec:numerical}

In this section, we present numerical experiments comparing
Algorithm~\ref{alg:adaptive-Tatsuoka}, the norm-free
Algorithm~\ref{alg:new-adaptive}, and the error-bound-based
Algorithm~\ref{alg:new-adaptive-error-bound}. We assess their accuracy
as a function of both the number of quadrature nodes and computation
time, compare their total termination times, and evaluate the error
bound reported by Algorithm~\ref{alg:new-adaptive-error-bound}.

\subsection{Experimental setup}
\label{subsec:numerical-setup}

The source code and numerical data used in these experiments are available
at the GitHub repository \url{https://github.com/okayamat/matrix-log-DE}.
The experiments were performed on a MacBook Air with an Apple M4 chip
and 24 GB of memory, running macOS Tahoe 26.6.2 and MATLAB R2024b.
MATLAB's \texttt{maxNumCompThreads} function reported a maximum of 10
computational threads; this value was queried and not explicitly set.
Execution times were measured using MATLAB's \texttt{timeit} function.
Each timed call invokes the complete algorithm routine, so the measured
time includes all preprocessing performed by the routine, including
any norm or spectral-information calculations, parameter selection,
quadrature evaluations, and any stopping tests. Construction of the test
matrices and reference logarithms is performed outside the timed calls.

Following Tatsuoka et al.~\cite{TatsuEn}, we generated $A$ as follows.
For each matrix dimension $m$, we generated an $m\times m$ matrix $G$ with
independent standard normally distributed entries using MATLAB's
\texttt{randn(m)}.  We computed its QR
factorization $G=QR$ and replaced $Q$ by $Q\diag(s)$,
where $s_k=\operatorname{sign}(R_{kk})$ and $s_k=1$ if $R_{kk}=0$.
This sign correction gives a Haar-uniform random orthogonal matrix.
For each matrix dimension, we generated a single matrix $Q$ and reused
it for all three prescribed condition numbers,
$\kappa(A)=10^1,\,10^4,\,10^7$.
For each prescribed condition number $\kappa(A)$, we set
$r=\kappa(A)^{1/(m-1)}$ and define
$d_j=\kappa(A)^{-1/2}r^{j-1}$ for $j=1,\,\ldots,\,m$; then
$D=\diag(d_1,\,\ldots,\,d_m)$ is positive definite and has
condition number $\kappa(A)$.  The test matrix is
$A=QDQ^{\mathsf{T}}$.
Since $Q$ is orthogonal and $D$ is positive diagonal, $A$ is symmetric
positive definite (hence Hermitian positive definite), with condition
number $\kappa(A)$.
By construction, $\lambda_{\min}\lambda_{\max}=1$ holds.

We report the relative error
$\|\log(A)-S\|_2/\|\log(A)\|_2$ for each computed approximation $S$.
As the reference value for $\log(A)$, we computed
$Q\diag(\log(d_1),\,\ldots,\,\log(d_m))Q^{\mathsf{T}}$.
The matrix dimensions used in the main numerical experiments were
$m=50,100,200,400,800,1600$.  The separate error-bound experiment was
conducted only for $m=50$, since the theoretical bound has no explicit dependence on $m$.
For the experiments,
the parameters for Algorithm~\ref{alg:adaptive-Tatsuoka} were
$\varepsilon=2^{-53}$, $\tau=10^{-11}$, and $n_0=2$; those for
Algorithm~\ref{alg:new-adaptive} were $\varepsilon_{\mathrm{abs}}=10^{-13}$ and
$\varepsilon_{\mathrm{rel}}=10^{-11}$; and that for
Algorithm~\ref{alg:new-adaptive-error-bound} was $\varepsilon_{\mathrm{rel}}=10^{-11}$.
We set the node limit to $M_{\max}=640$ except in the time-error plots
(Figs.~\ref{fig:TimeError-1e1}--\ref{fig:TimeError-1e7}), where we used $M_{\max}=5\cdot2^k$ for $k=0,\,\ldots,\,7$ to compare error--time trade-offs.

\subsection{Accuracy versus number of quadrature nodes}
\label{subsec:numerical-node-error}

\begin{figure}[htpb]
\centering
\scalebox{0.72}{\begin{tikzpicture}[gnuplot]
\tikzset{every node/.append style={font={\ttfamily\fontsize{12.0pt}{14.4pt}\selectfont}}}
\gpmonochromelines
\path (0.000,0.000) rectangle (12.500,8.750);
\gpcolor{color=gp lt color border}
\gpsetlinetype{gp lt border}
\gpsetdashtype{gp dt solid}
\gpsetlinewidth{1.00}
\draw[gp path] (2.027,1.184)--(2.117,1.184);
\draw[gp path] (11.836,1.184)--(11.746,1.184);
\draw[gp path] (2.027,1.629)--(2.207,1.629);
\draw[gp path] (11.836,1.629)--(11.656,1.629);
\node[gp node right] at (1.806,1.629) {$10^{-14}$};
\draw[gp path] (2.027,2.074)--(2.117,2.074);
\draw[gp path] (11.836,2.074)--(11.746,2.074);
\draw[gp path] (2.027,2.520)--(2.207,2.520);
\draw[gp path] (11.836,2.520)--(11.656,2.520);
\node[gp node right] at (1.806,2.520) {$10^{-12}$};
\draw[gp path] (2.027,2.965)--(2.117,2.965);
\draw[gp path] (11.836,2.965)--(11.746,2.965);
\draw[gp path] (2.027,3.410)--(2.207,3.410);
\draw[gp path] (11.836,3.410)--(11.656,3.410);
\node[gp node right] at (1.806,3.410) {$10^{-10}$};
\draw[gp path] (2.027,3.855)--(2.117,3.855);
\draw[gp path] (11.836,3.855)--(11.746,3.855);
\draw[gp path] (2.027,4.300)--(2.207,4.300);
\draw[gp path] (11.836,4.300)--(11.656,4.300);
\node[gp node right] at (1.806,4.300) {$10^{-8}$};
\draw[gp path] (2.027,4.746)--(2.117,4.746);
\draw[gp path] (11.836,4.746)--(11.746,4.746);
\draw[gp path] (2.027,5.191)--(2.207,5.191);
\draw[gp path] (11.836,5.191)--(11.656,5.191);
\node[gp node right] at (1.806,5.191) {$10^{-6}$};
\draw[gp path] (2.027,5.636)--(2.117,5.636);
\draw[gp path] (11.836,5.636)--(11.746,5.636);
\draw[gp path] (2.027,6.081)--(2.207,6.081);
\draw[gp path] (11.836,6.081)--(11.656,6.081);
\node[gp node right] at (1.806,6.081) {$10^{-4}$};
\draw[gp path] (2.027,6.526)--(2.117,6.526);
\draw[gp path] (11.836,6.526)--(11.746,6.526);
\draw[gp path] (2.027,6.971)--(2.207,6.971);
\draw[gp path] (11.836,6.971)--(11.656,6.971);
\node[gp node right] at (1.806,6.971) {$10^{-2}$};
\draw[gp path] (2.027,7.417)--(2.117,7.417);
\draw[gp path] (11.836,7.417)--(11.746,7.417);
\draw[gp path] (2.027,7.862)--(2.207,7.862);
\draw[gp path] (11.836,7.862)--(11.656,7.862);
\node[gp node right] at (1.806,7.862) {$10^{0}$};
\draw[gp path] (2.027,8.307)--(2.117,8.307);
\draw[gp path] (11.836,8.307)--(11.746,8.307);
\draw[gp path] (2.027,1.184)--(2.027,1.364);
\draw[gp path] (2.027,8.307)--(2.027,8.127);
\node[gp node center] at (2.027,0.814) {$0$};
\draw[gp path] (3.428,1.184)--(3.428,1.364);
\draw[gp path] (3.428,8.307)--(3.428,8.127);
\node[gp node center] at (3.428,0.814) {$50$};
\draw[gp path] (4.830,1.184)--(4.830,1.364);
\draw[gp path] (4.830,8.307)--(4.830,8.127);
\node[gp node center] at (4.830,0.814) {$100$};
\draw[gp path] (6.231,1.184)--(6.231,1.364);
\draw[gp path] (6.231,8.307)--(6.231,8.127);
\node[gp node center] at (6.231,0.814) {$150$};
\draw[gp path] (7.632,1.184)--(7.632,1.364);
\draw[gp path] (7.632,8.307)--(7.632,8.127);
\node[gp node center] at (7.632,0.814) {$200$};
\draw[gp path] (9.033,1.184)--(9.033,1.364);
\draw[gp path] (9.033,8.307)--(9.033,8.127);
\node[gp node center] at (9.033,0.814) {$250$};
\draw[gp path] (10.435,1.184)--(10.435,1.364);
\draw[gp path] (10.435,8.307)--(10.435,8.127);
\node[gp node center] at (10.435,0.814) {$300$};
\draw[gp path] (11.836,1.184)--(11.836,1.364);
\draw[gp path] (11.836,8.307)--(11.836,8.127);
\node[gp node center] at (11.836,0.814) {$350$};
\draw[gp path] (2.027,8.307)--(2.027,1.184)--(11.836,1.184)--(11.836,8.307)--cycle;
\node[gp node center,rotate=-270] at (0.295,4.745) {Relative error (2-norm)};
\node[gp node center] at (6.931,0.259) {Number of quadrature nodes ($M$)};
\node[gp node right] at (10.109,7.902) {Algorithm 1 ($m=50$)};
\draw[gp path] (10.330,7.902)--(11.394,7.902);
\draw[gp path] (2.111,7.915)--(2.167,7.524)--(2.279,6.832)--(2.503,5.232)--(2.952,2.356)%
  --(3.849,1.475);
\gpsetpointsize{4.00}
\gppoint{gp mark 8}{(2.111,7.915)}
\gppoint{gp mark 8}{(2.167,7.524)}
\gppoint{gp mark 8}{(2.279,6.832)}
\gppoint{gp mark 8}{(2.503,5.232)}
\gppoint{gp mark 8}{(2.952,2.356)}
\gppoint{gp mark 8}{(3.849,1.475)}
\gppoint{gp mark 8}{(10.862,7.902)}
\node[gp node right] at (10.109,7.452) {Algorithm 2 ($m=50$)};
\draw[gp path] (10.330,7.452)--(11.394,7.452);
\draw[gp path] (2.167,7.065)--(2.391,5.930)--(2.784,3.352)--(3.849,1.478)--(6.203,1.477);
\gppoint{gp mark 6}{(2.167,7.065)}
\gppoint{gp mark 6}{(2.391,5.930)}
\gppoint{gp mark 6}{(2.784,3.352)}
\gppoint{gp mark 6}{(3.849,1.478)}
\gppoint{gp mark 6}{(6.203,1.477)}
\gppoint{gp mark 6}{(10.862,7.452)}
\node[gp node right] at (10.109,7.002) {Algorithm 3 ($m=50$)};
\draw[gp path] (10.330,7.002)--(11.394,7.002);
\gppoint{gp mark 4}{(3.288,1.473)}
\gppoint{gp mark 4}{(10.862,7.002)}
\node[gp node right] at (10.109,6.552) {Algorithm 1 ($m=1600$)};
\draw[gp path] (10.330,6.552)--(11.394,6.552);
\draw[gp path] (2.111,7.915)--(2.167,7.524)--(2.279,6.832)--(2.503,5.232)--(2.952,2.356)%
  --(3.849,1.603);
\gppoint{gp mark 9}{(2.111,7.915)}
\gppoint{gp mark 9}{(2.167,7.524)}
\gppoint{gp mark 9}{(2.279,6.832)}
\gppoint{gp mark 9}{(2.503,5.232)}
\gppoint{gp mark 9}{(2.952,2.356)}
\gppoint{gp mark 9}{(3.849,1.603)}
\gppoint{gp mark 9}{(10.862,6.552)}
\node[gp node right] at (10.109,6.102) {Algorithm 2 ($m=1600$)};
\draw[gp path] (10.330,6.102)--(11.394,6.102);
\draw[gp path] (2.167,7.065)--(2.391,5.930)--(2.784,3.354)--(3.849,1.600)--(6.203,1.601);
\gppoint{gp mark 7}{(2.167,7.065)}
\gppoint{gp mark 7}{(2.391,5.930)}
\gppoint{gp mark 7}{(2.784,3.354)}
\gppoint{gp mark 7}{(3.849,1.600)}
\gppoint{gp mark 7}{(6.203,1.601)}
\gppoint{gp mark 7}{(10.862,6.102)}
\node[gp node right] at (10.109,5.652) {Algorithm 3 ($m=1600$)};
\draw[gp path] (10.330,5.652)--(11.394,5.652);
\gppoint{gp mark 5}{(3.288,1.600)}
\gppoint{gp mark 5}{(10.862,5.652)}
\node[gp node right] at (10.109,5.202) {Error bound for Algorithm 3};
\gpsetdashtype{gp dt 2}
\draw[gp path] (10.330,5.202)--(11.394,5.202);
\draw[gp path] (2.167,7.933)--(2.391,6.796)--(2.559,5.659)--(2.784,4.522)--(3.064,3.385)%
  --(3.288,2.248);
\gpsetdashtype{gp dt solid}
\draw[gp path] (2.027,8.307)--(2.027,1.184)--(11.836,1.184)--(11.836,8.307)--cycle;
\gpdefrectangularnode{gp plot 1}{\pgfpoint{2.027cm}{1.184cm}}{\pgfpoint{11.836cm}{8.307cm}}
\end{tikzpicture}
}
\caption{Relative errors with respect to $M$ for $\log(A)$ with $\kappa(A)=10^1$.}
\label{fig:PointError-1e1}
\scalebox{0.72}{\begin{tikzpicture}[gnuplot]
\tikzset{every node/.append style={font={\ttfamily\fontsize{12.0pt}{14.4pt}\selectfont}}}
\gpmonochromelines
\path (0.000,0.000) rectangle (12.500,8.750);
\gpcolor{color=gp lt color border}
\gpsetlinetype{gp lt border}
\gpsetdashtype{gp dt solid}
\gpsetlinewidth{1.00}
\draw[gp path] (2.027,1.184)--(2.117,1.184);
\draw[gp path] (11.836,1.184)--(11.746,1.184);
\draw[gp path] (2.027,1.629)--(2.207,1.629);
\draw[gp path] (11.836,1.629)--(11.656,1.629);
\node[gp node right] at (1.806,1.629) {$10^{-14}$};
\draw[gp path] (2.027,2.074)--(2.117,2.074);
\draw[gp path] (11.836,2.074)--(11.746,2.074);
\draw[gp path] (2.027,2.520)--(2.207,2.520);
\draw[gp path] (11.836,2.520)--(11.656,2.520);
\node[gp node right] at (1.806,2.520) {$10^{-12}$};
\draw[gp path] (2.027,2.965)--(2.117,2.965);
\draw[gp path] (11.836,2.965)--(11.746,2.965);
\draw[gp path] (2.027,3.410)--(2.207,3.410);
\draw[gp path] (11.836,3.410)--(11.656,3.410);
\node[gp node right] at (1.806,3.410) {$10^{-10}$};
\draw[gp path] (2.027,3.855)--(2.117,3.855);
\draw[gp path] (11.836,3.855)--(11.746,3.855);
\draw[gp path] (2.027,4.300)--(2.207,4.300);
\draw[gp path] (11.836,4.300)--(11.656,4.300);
\node[gp node right] at (1.806,4.300) {$10^{-8}$};
\draw[gp path] (2.027,4.746)--(2.117,4.746);
\draw[gp path] (11.836,4.746)--(11.746,4.746);
\draw[gp path] (2.027,5.191)--(2.207,5.191);
\draw[gp path] (11.836,5.191)--(11.656,5.191);
\node[gp node right] at (1.806,5.191) {$10^{-6}$};
\draw[gp path] (2.027,5.636)--(2.117,5.636);
\draw[gp path] (11.836,5.636)--(11.746,5.636);
\draw[gp path] (2.027,6.081)--(2.207,6.081);
\draw[gp path] (11.836,6.081)--(11.656,6.081);
\node[gp node right] at (1.806,6.081) {$10^{-4}$};
\draw[gp path] (2.027,6.526)--(2.117,6.526);
\draw[gp path] (11.836,6.526)--(11.746,6.526);
\draw[gp path] (2.027,6.971)--(2.207,6.971);
\draw[gp path] (11.836,6.971)--(11.656,6.971);
\node[gp node right] at (1.806,6.971) {$10^{-2}$};
\draw[gp path] (2.027,7.417)--(2.117,7.417);
\draw[gp path] (11.836,7.417)--(11.746,7.417);
\draw[gp path] (2.027,7.862)--(2.207,7.862);
\draw[gp path] (11.836,7.862)--(11.656,7.862);
\node[gp node right] at (1.806,7.862) {$10^{0}$};
\draw[gp path] (2.027,8.307)--(2.117,8.307);
\draw[gp path] (11.836,8.307)--(11.746,8.307);
\draw[gp path] (2.027,1.184)--(2.027,1.364);
\draw[gp path] (2.027,8.307)--(2.027,8.127);
\node[gp node center] at (2.027,0.814) {$0$};
\draw[gp path] (3.428,1.184)--(3.428,1.364);
\draw[gp path] (3.428,8.307)--(3.428,8.127);
\node[gp node center] at (3.428,0.814) {$50$};
\draw[gp path] (4.830,1.184)--(4.830,1.364);
\draw[gp path] (4.830,8.307)--(4.830,8.127);
\node[gp node center] at (4.830,0.814) {$100$};
\draw[gp path] (6.231,1.184)--(6.231,1.364);
\draw[gp path] (6.231,8.307)--(6.231,8.127);
\node[gp node center] at (6.231,0.814) {$150$};
\draw[gp path] (7.632,1.184)--(7.632,1.364);
\draw[gp path] (7.632,8.307)--(7.632,8.127);
\node[gp node center] at (7.632,0.814) {$200$};
\draw[gp path] (9.033,1.184)--(9.033,1.364);
\draw[gp path] (9.033,8.307)--(9.033,8.127);
\node[gp node center] at (9.033,0.814) {$250$};
\draw[gp path] (10.435,1.184)--(10.435,1.364);
\draw[gp path] (10.435,8.307)--(10.435,8.127);
\node[gp node center] at (10.435,0.814) {$300$};
\draw[gp path] (11.836,1.184)--(11.836,1.364);
\draw[gp path] (11.836,8.307)--(11.836,8.127);
\node[gp node center] at (11.836,0.814) {$350$};
\draw[gp path] (2.027,8.307)--(2.027,1.184)--(11.836,1.184)--(11.836,8.307)--cycle;
\node[gp node center,rotate=-270] at (0.295,4.745) {Relative error (2-norm)};
\node[gp node center] at (6.931,0.259) {Number of quadrature nodes ($M$)};
\node[gp node right] at (10.109,7.902) {Algorithm 1 ($m=50$)};
\draw[gp path] (10.330,7.902)--(11.394,7.902);
\draw[gp path] (2.111,7.724)--(2.167,7.652)--(2.279,7.216)--(2.503,6.496)--(2.952,4.799)%
  --(3.849,1.807)--(5.642,1.782);
\gpsetpointsize{4.00}
\gppoint{gp mark 8}{(2.111,7.724)}
\gppoint{gp mark 8}{(2.167,7.652)}
\gppoint{gp mark 8}{(2.279,7.216)}
\gppoint{gp mark 8}{(2.503,6.496)}
\gppoint{gp mark 8}{(2.952,4.799)}
\gppoint{gp mark 8}{(3.849,1.807)}
\gppoint{gp mark 8}{(5.642,1.782)}
\gppoint{gp mark 8}{(10.862,7.902)}
\node[gp node right] at (10.109,7.452) {Algorithm 2 ($m=50$)};
\draw[gp path] (10.330,7.452)--(11.394,7.452);
\draw[gp path] (2.167,7.392)--(2.391,6.665)--(2.784,5.465)--(3.849,2.772)--(6.203,1.777)%
  --(11.584,1.775);
\gppoint{gp mark 6}{(2.167,7.392)}
\gppoint{gp mark 6}{(2.391,6.665)}
\gppoint{gp mark 6}{(2.784,5.465)}
\gppoint{gp mark 6}{(3.849,2.772)}
\gppoint{gp mark 6}{(6.203,1.777)}
\gppoint{gp mark 6}{(11.584,1.775)}
\gppoint{gp mark 6}{(10.862,7.452)}
\node[gp node right] at (10.109,7.002) {Algorithm 3 ($m=50$)};
\draw[gp path] (10.330,7.002)--(11.394,7.002);
\gppoint{gp mark 4}{(4.409,1.776)}
\gppoint{gp mark 4}{(10.862,7.002)}
\node[gp node right] at (10.109,6.552) {Algorithm 1 ($m=1600$)};
\draw[gp path] (10.330,6.552)--(11.394,6.552);
\draw[gp path] (2.111,7.725)--(2.167,7.652)--(2.279,7.216)--(2.503,6.496)--(2.952,4.807)%
  --(3.849,2.091)--(5.642,2.087);
\gppoint{gp mark 9}{(2.111,7.725)}
\gppoint{gp mark 9}{(2.167,7.652)}
\gppoint{gp mark 9}{(2.279,7.216)}
\gppoint{gp mark 9}{(2.503,6.496)}
\gppoint{gp mark 9}{(2.952,4.807)}
\gppoint{gp mark 9}{(3.849,2.091)}
\gppoint{gp mark 9}{(5.642,2.087)}
\gppoint{gp mark 9}{(10.862,6.552)}
\node[gp node right] at (10.109,6.102) {Algorithm 2 ($m=1600$)};
\draw[gp path] (10.330,6.102)--(11.394,6.102);
\draw[gp path] (2.167,7.392)--(2.391,6.665)--(2.784,5.465)--(3.849,2.816)--(6.203,2.087)%
  --(11.584,2.088);
\gppoint{gp mark 7}{(2.167,7.392)}
\gppoint{gp mark 7}{(2.391,6.665)}
\gppoint{gp mark 7}{(2.784,5.465)}
\gppoint{gp mark 7}{(3.849,2.816)}
\gppoint{gp mark 7}{(6.203,2.087)}
\gppoint{gp mark 7}{(11.584,2.088)}
\gppoint{gp mark 7}{(10.862,6.102)}
\node[gp node right] at (10.109,5.652) {Algorithm 3 ($m=1600$)};
\draw[gp path] (10.330,5.652)--(11.394,5.652);
\gppoint{gp mark 5}{(4.409,2.087)}
\gppoint{gp mark 5}{(10.862,5.652)}
\node[gp node right] at (10.109,5.202) {Error bound for Algorithm 3};
\gpsetdashtype{gp dt 2}
\draw[gp path] (10.330,5.202)--(11.394,5.202);
\draw[gp path] (2.167,8.195)--(2.391,7.574)--(2.559,6.960)--(2.784,6.347)--(3.064,5.733)%
  --(3.288,5.120)--(3.568,4.506)--(3.849,3.893)--(4.129,3.279)--(4.409,2.666);
\gpsetdashtype{gp dt solid}
\draw[gp path] (2.027,8.307)--(2.027,1.184)--(11.836,1.184)--(11.836,8.307)--cycle;
\gpdefrectangularnode{gp plot 1}{\pgfpoint{2.027cm}{1.184cm}}{\pgfpoint{11.836cm}{8.307cm}}
\end{tikzpicture}
}
\caption{Relative errors with respect to $M$ for $\log(A)$ with $\kappa(A)=10^4$.}
\label{fig:PointError-1e4}
\scalebox{0.72}{\begin{tikzpicture}[gnuplot]
\tikzset{every node/.append style={font={\ttfamily\fontsize{12.0pt}{14.4pt}\selectfont}}}
\gpmonochromelines
\path (0.000,0.000) rectangle (12.500,8.750);
\gpcolor{color=gp lt color border}
\gpsetlinetype{gp lt border}
\gpsetdashtype{gp dt solid}
\gpsetlinewidth{1.00}
\draw[gp path] (2.027,1.184)--(2.117,1.184);
\draw[gp path] (11.836,1.184)--(11.746,1.184);
\draw[gp path] (2.027,1.629)--(2.207,1.629);
\draw[gp path] (11.836,1.629)--(11.656,1.629);
\node[gp node right] at (1.806,1.629) {$10^{-14}$};
\draw[gp path] (2.027,2.074)--(2.117,2.074);
\draw[gp path] (11.836,2.074)--(11.746,2.074);
\draw[gp path] (2.027,2.520)--(2.207,2.520);
\draw[gp path] (11.836,2.520)--(11.656,2.520);
\node[gp node right] at (1.806,2.520) {$10^{-12}$};
\draw[gp path] (2.027,2.965)--(2.117,2.965);
\draw[gp path] (11.836,2.965)--(11.746,2.965);
\draw[gp path] (2.027,3.410)--(2.207,3.410);
\draw[gp path] (11.836,3.410)--(11.656,3.410);
\node[gp node right] at (1.806,3.410) {$10^{-10}$};
\draw[gp path] (2.027,3.855)--(2.117,3.855);
\draw[gp path] (11.836,3.855)--(11.746,3.855);
\draw[gp path] (2.027,4.300)--(2.207,4.300);
\draw[gp path] (11.836,4.300)--(11.656,4.300);
\node[gp node right] at (1.806,4.300) {$10^{-8}$};
\draw[gp path] (2.027,4.746)--(2.117,4.746);
\draw[gp path] (11.836,4.746)--(11.746,4.746);
\draw[gp path] (2.027,5.191)--(2.207,5.191);
\draw[gp path] (11.836,5.191)--(11.656,5.191);
\node[gp node right] at (1.806,5.191) {$10^{-6}$};
\draw[gp path] (2.027,5.636)--(2.117,5.636);
\draw[gp path] (11.836,5.636)--(11.746,5.636);
\draw[gp path] (2.027,6.081)--(2.207,6.081);
\draw[gp path] (11.836,6.081)--(11.656,6.081);
\node[gp node right] at (1.806,6.081) {$10^{-4}$};
\draw[gp path] (2.027,6.526)--(2.117,6.526);
\draw[gp path] (11.836,6.526)--(11.746,6.526);
\draw[gp path] (2.027,6.971)--(2.207,6.971);
\draw[gp path] (11.836,6.971)--(11.656,6.971);
\node[gp node right] at (1.806,6.971) {$10^{-2}$};
\draw[gp path] (2.027,7.417)--(2.117,7.417);
\draw[gp path] (11.836,7.417)--(11.746,7.417);
\draw[gp path] (2.027,7.862)--(2.207,7.862);
\draw[gp path] (11.836,7.862)--(11.656,7.862);
\node[gp node right] at (1.806,7.862) {$10^{0}$};
\draw[gp path] (2.027,8.307)--(2.117,8.307);
\draw[gp path] (11.836,8.307)--(11.746,8.307);
\draw[gp path] (2.027,1.184)--(2.027,1.364);
\draw[gp path] (2.027,8.307)--(2.027,8.127);
\node[gp node center] at (2.027,0.814) {$0$};
\draw[gp path] (3.428,1.184)--(3.428,1.364);
\draw[gp path] (3.428,8.307)--(3.428,8.127);
\node[gp node center] at (3.428,0.814) {$50$};
\draw[gp path] (4.830,1.184)--(4.830,1.364);
\draw[gp path] (4.830,8.307)--(4.830,8.127);
\node[gp node center] at (4.830,0.814) {$100$};
\draw[gp path] (6.231,1.184)--(6.231,1.364);
\draw[gp path] (6.231,8.307)--(6.231,8.127);
\node[gp node center] at (6.231,0.814) {$150$};
\draw[gp path] (7.632,1.184)--(7.632,1.364);
\draw[gp path] (7.632,8.307)--(7.632,8.127);
\node[gp node center] at (7.632,0.814) {$200$};
\draw[gp path] (9.033,1.184)--(9.033,1.364);
\draw[gp path] (9.033,8.307)--(9.033,8.127);
\node[gp node center] at (9.033,0.814) {$250$};
\draw[gp path] (10.435,1.184)--(10.435,1.364);
\draw[gp path] (10.435,8.307)--(10.435,8.127);
\node[gp node center] at (10.435,0.814) {$300$};
\draw[gp path] (11.836,1.184)--(11.836,1.364);
\draw[gp path] (11.836,8.307)--(11.836,8.127);
\node[gp node center] at (11.836,0.814) {$350$};
\draw[gp path] (2.027,8.307)--(2.027,1.184)--(11.836,1.184)--(11.836,8.307)--cycle;
\node[gp node center,rotate=-270] at (0.295,4.745) {Relative error (2-norm)};
\node[gp node center] at (6.931,0.259) {Number of quadrature nodes ($M$)};
\node[gp node right] at (10.109,7.902) {Algorithm 1 ($m=50$)};
\draw[gp path] (10.330,7.902)--(11.394,7.902);
\draw[gp path] (2.111,7.695)--(2.167,7.656)--(2.279,7.404)--(2.503,6.949)--(2.952,5.939)%
  --(3.849,3.869)--(5.642,2.961)--(9.230,2.916);
\gpsetpointsize{4.00}
\gppoint{gp mark 8}{(2.111,7.695)}
\gppoint{gp mark 8}{(2.167,7.656)}
\gppoint{gp mark 8}{(2.279,7.404)}
\gppoint{gp mark 8}{(2.503,6.949)}
\gppoint{gp mark 8}{(2.952,5.939)}
\gppoint{gp mark 8}{(3.849,3.869)}
\gppoint{gp mark 8}{(5.642,2.961)}
\gppoint{gp mark 8}{(9.230,2.916)}
\gppoint{gp mark 8}{(10.862,7.902)}
\node[gp node right] at (10.109,7.452) {Algorithm 2 ($m=50$)};
\draw[gp path] (10.330,7.452)--(11.394,7.452);
\draw[gp path] (2.167,7.508)--(2.391,7.014)--(2.784,6.180)--(3.849,4.505)--(6.203,2.937)%
  --(11.584,2.900);
\gppoint{gp mark 6}{(2.167,7.508)}
\gppoint{gp mark 6}{(2.391,7.014)}
\gppoint{gp mark 6}{(2.784,6.180)}
\gppoint{gp mark 6}{(3.849,4.505)}
\gppoint{gp mark 6}{(6.203,2.937)}
\gppoint{gp mark 6}{(11.584,2.900)}
\gppoint{gp mark 6}{(10.862,7.452)}
\node[gp node right] at (10.109,7.002) {Algorithm 3 ($m=50$)};
\draw[gp path] (10.330,7.002)--(11.394,7.002);
\gppoint{gp mark 4}{(5.923,2.938)}
\gppoint{gp mark 4}{(10.862,7.002)}
\node[gp node right] at (10.109,6.552) {Algorithm 1 ($m=1600$)};
\draw[gp path] (10.330,6.552)--(11.394,6.552);
\draw[gp path] (2.111,7.695)--(2.167,7.656)--(2.279,7.404)--(2.503,6.949)--(2.952,5.939)%
  --(3.849,3.908)--(5.642,3.163)--(9.230,3.162);
\gppoint{gp mark 9}{(2.111,7.695)}
\gppoint{gp mark 9}{(2.167,7.656)}
\gppoint{gp mark 9}{(2.279,7.404)}
\gppoint{gp mark 9}{(2.503,6.949)}
\gppoint{gp mark 9}{(2.952,5.939)}
\gppoint{gp mark 9}{(3.849,3.908)}
\gppoint{gp mark 9}{(5.642,3.163)}
\gppoint{gp mark 9}{(9.230,3.162)}
\gppoint{gp mark 9}{(10.862,6.552)}
\node[gp node right] at (10.109,6.102) {Algorithm 2 ($m=1600$)};
\draw[gp path] (10.330,6.102)--(11.394,6.102);
\draw[gp path] (2.167,7.508)--(2.391,7.015)--(2.784,6.182)--(3.849,4.511)--(6.203,3.169)%
  --(11.584,3.164);
\gppoint{gp mark 7}{(2.167,7.508)}
\gppoint{gp mark 7}{(2.391,7.015)}
\gppoint{gp mark 7}{(2.784,6.182)}
\gppoint{gp mark 7}{(3.849,4.511)}
\gppoint{gp mark 7}{(6.203,3.169)}
\gppoint{gp mark 7}{(11.584,3.164)}
\gppoint{gp mark 7}{(10.862,6.102)}
\node[gp node right] at (10.109,5.652) {Algorithm 3 ($m=1600$)};
\draw[gp path] (10.330,5.652)--(11.394,5.652);
\gppoint{gp mark 5}{(5.923,3.168)}
\gppoint{gp mark 5}{(10.862,5.652)}
\node[gp node right] at (10.109,5.202) {Error bound for Algorithm 3};
\gpsetdashtype{gp dt 2}
\draw[gp path] (10.330,5.202)--(11.394,5.202);
\draw[gp path] (2.200,8.307)--(2.391,7.941)--(2.559,7.534)--(2.784,7.128)--(3.064,6.723)%
  --(3.288,6.318)--(3.568,5.913)--(3.849,5.509)--(4.129,5.104)--(4.409,4.699)--(4.689,4.294)%
  --(4.970,3.889)--(5.306,3.484)--(5.586,3.079)--(5.923,2.674);
\gpsetdashtype{gp dt solid}
\draw[gp path] (2.027,8.307)--(2.027,1.184)--(11.836,1.184)--(11.836,8.307)--cycle;
\gpdefrectangularnode{gp plot 1}{\pgfpoint{2.027cm}{1.184cm}}{\pgfpoint{11.836cm}{8.307cm}}
\end{tikzpicture}
}
\caption{Relative errors with respect to $M$ for $\log(A)$ with $\kappa(A)=10^7$.}
\label{fig:PointError-1e7}
\end{figure}

Figures~\ref{fig:PointError-1e1}--\ref{fig:PointError-1e7} plot the relative error against the number $M$ of quadrature nodes
for matrix dimensions $m=50$ and $m=1600$.  As $M$ increases,
Algorithm~\ref{alg:adaptive-Tatsuoka} exhibits a slightly faster
convergence than Algorithm~\ref{alg:new-adaptive}.  This trend is qualitatively consistent with the convergence analyses:
the discretization error of Algorithm~\ref{alg:adaptive-Tatsuoka}
decreases as $\OO(\exp(-cM))$, whereas the errors of
Algorithms~\ref{alg:new-adaptive} and~\ref{alg:new-adaptive-error-bound}
are $\OO(\exp(-cn))\approx\OO(\exp(-cM/\log M))$ (note that
$M=2N+1=\OO(n\log n)$). The latter rate is predicted for
Algorithm~\ref{alg:new-adaptive-error-bound} by the analysis, but is
not observable in these plots because this algorithm determines the
number of quadrature nodes in advance from the error bound and therefore
contributes only one plotted point. We note that for Algorithm~\ref{alg:adaptive-Tatsuoka}, this
exponential rate ($\OO(\exp(-cM))$) describes the discretization error; its total error
also includes a truncation-error contribution that does not decrease
with $M$.
For the two plotted dimensions, $m=50$ and $m=1600$, the results
almost coincide.  This observation is consistent with the error
analyses, whose bounds do not depend explicitly on the matrix dimension $m$.

The same figures also show that the error bound for
Algorithm~\ref{alg:new-adaptive-error-bound} is effective for
$\kappa(A)=10^1$ and $10^4$.  For $\kappa(A)=10^7$, however, the
observed error at its sole plotted point, $M=139$, is slightly larger
than the reported bound.  The bound
controls the quadrature error and does not account for floating-point
rounding; this discrepancy may therefore be due to roundoff. Accounting
for finite-precision effects in the error estimate is left for future work.

\subsection{Accuracy versus computation time}
\label{subsec:numerical-time-error}

\begin{figure}[htpb]
\centering
\scalebox{0.72}{\begin{tikzpicture}[gnuplot]
\tikzset{every node/.append style={font={\ttfamily\fontsize{12.0pt}{14.4pt}\selectfont}}}
\gpmonochromelines
\path (0.000,0.000) rectangle (12.500,8.750);
\gpcolor{color=gp lt color border}
\gpsetlinetype{gp lt border}
\gpsetdashtype{gp dt solid}
\gpsetlinewidth{1.00}
\draw[gp path] (2.027,1.184)--(2.117,1.184);
\draw[gp path] (11.836,1.184)--(11.746,1.184);
\draw[gp path] (2.027,1.629)--(2.207,1.629);
\draw[gp path] (11.836,1.629)--(11.656,1.629);
\node[gp node right] at (1.806,1.629) {$10^{-14}$};
\draw[gp path] (2.027,2.074)--(2.117,2.074);
\draw[gp path] (11.836,2.074)--(11.746,2.074);
\draw[gp path] (2.027,2.520)--(2.207,2.520);
\draw[gp path] (11.836,2.520)--(11.656,2.520);
\node[gp node right] at (1.806,2.520) {$10^{-12}$};
\draw[gp path] (2.027,2.965)--(2.117,2.965);
\draw[gp path] (11.836,2.965)--(11.746,2.965);
\draw[gp path] (2.027,3.410)--(2.207,3.410);
\draw[gp path] (11.836,3.410)--(11.656,3.410);
\node[gp node right] at (1.806,3.410) {$10^{-10}$};
\draw[gp path] (2.027,3.855)--(2.117,3.855);
\draw[gp path] (11.836,3.855)--(11.746,3.855);
\draw[gp path] (2.027,4.300)--(2.207,4.300);
\draw[gp path] (11.836,4.300)--(11.656,4.300);
\node[gp node right] at (1.806,4.300) {$10^{-8}$};
\draw[gp path] (2.027,4.746)--(2.117,4.746);
\draw[gp path] (11.836,4.746)--(11.746,4.746);
\draw[gp path] (2.027,5.191)--(2.207,5.191);
\draw[gp path] (11.836,5.191)--(11.656,5.191);
\node[gp node right] at (1.806,5.191) {$10^{-6}$};
\draw[gp path] (2.027,5.636)--(2.117,5.636);
\draw[gp path] (11.836,5.636)--(11.746,5.636);
\draw[gp path] (2.027,6.081)--(2.207,6.081);
\draw[gp path] (11.836,6.081)--(11.656,6.081);
\node[gp node right] at (1.806,6.081) {$10^{-4}$};
\draw[gp path] (2.027,6.526)--(2.117,6.526);
\draw[gp path] (11.836,6.526)--(11.746,6.526);
\draw[gp path] (2.027,6.971)--(2.207,6.971);
\draw[gp path] (11.836,6.971)--(11.656,6.971);
\node[gp node right] at (1.806,6.971) {$10^{-2}$};
\draw[gp path] (2.027,7.417)--(2.117,7.417);
\draw[gp path] (11.836,7.417)--(11.746,7.417);
\draw[gp path] (2.027,7.862)--(2.207,7.862);
\draw[gp path] (11.836,7.862)--(11.656,7.862);
\node[gp node right] at (1.806,7.862) {$10^{0}$};
\draw[gp path] (2.027,8.307)--(2.117,8.307);
\draw[gp path] (11.836,8.307)--(11.746,8.307);
\draw[gp path] (2.027,1.184)--(2.027,1.364);
\draw[gp path] (2.027,8.307)--(2.027,8.127);
\node[gp node center] at (2.027,0.814) {$0$};
\draw[gp path] (3.428,1.184)--(3.428,1.364);
\draw[gp path] (3.428,8.307)--(3.428,8.127);
\node[gp node center] at (3.428,0.814) {$1$};
\draw[gp path] (4.830,1.184)--(4.830,1.364);
\draw[gp path] (4.830,8.307)--(4.830,8.127);
\node[gp node center] at (4.830,0.814) {$2$};
\draw[gp path] (6.231,1.184)--(6.231,1.364);
\draw[gp path] (6.231,8.307)--(6.231,8.127);
\node[gp node center] at (6.231,0.814) {$3$};
\draw[gp path] (7.632,1.184)--(7.632,1.364);
\draw[gp path] (7.632,8.307)--(7.632,8.127);
\node[gp node center] at (7.632,0.814) {$4$};
\draw[gp path] (9.033,1.184)--(9.033,1.364);
\draw[gp path] (9.033,8.307)--(9.033,8.127);
\node[gp node center] at (9.033,0.814) {$5$};
\draw[gp path] (10.435,1.184)--(10.435,1.364);
\draw[gp path] (10.435,8.307)--(10.435,8.127);
\node[gp node center] at (10.435,0.814) {$6$};
\draw[gp path] (11.836,1.184)--(11.836,1.364);
\draw[gp path] (11.836,8.307)--(11.836,8.127);
\node[gp node center] at (11.836,0.814) {$7$};
\draw[gp path] (2.027,8.307)--(2.027,1.184)--(11.836,1.184)--(11.836,8.307)--cycle;
\node[gp node center,rotate=-270] at (0.295,4.745) {Relative error (2-norm)};
\node[gp node center] at (6.931,0.259) {Computation time [s]};
\node[gp node right] at (10.109,7.902) {Algorithm 1 ($m=400$)};
\draw[gp path] (10.330,7.902)--(11.394,7.902);
\draw[gp path] (2.420,7.524)--(2.488,6.832)--(2.574,5.232)--(2.723,2.356)--(3.074,1.529)%
  --(3.028,1.529)--(3.047,1.529)--(3.040,1.529);
\gpsetpointsize{4.00}
\gppoint{gp mark 8}{(2.420,7.524)}
\gppoint{gp mark 8}{(2.488,6.832)}
\gppoint{gp mark 8}{(2.574,5.232)}
\gppoint{gp mark 8}{(2.723,2.356)}
\gppoint{gp mark 8}{(3.074,1.529)}
\gppoint{gp mark 8}{(3.028,1.529)}
\gppoint{gp mark 8}{(3.047,1.529)}
\gppoint{gp mark 8}{(3.040,1.529)}
\gppoint{gp mark 8}{(10.862,7.902)}
\node[gp node right] at (10.109,7.452) {Algorithm 2 ($m=400$)};
\draw[gp path] (10.330,7.452)--(11.394,7.452);
\draw[gp path] (2.066,7.065)--(2.065,7.065)--(2.128,5.930)--(2.233,3.354)--(2.526,1.528)%
  --(3.214,1.528)--(3.489,1.528)--(3.513,1.528);
\gppoint{gp mark 6}{(2.066,7.065)}
\gppoint{gp mark 6}{(2.065,7.065)}
\gppoint{gp mark 6}{(2.128,5.930)}
\gppoint{gp mark 6}{(2.233,3.354)}
\gppoint{gp mark 6}{(2.526,1.528)}
\gppoint{gp mark 6}{(3.214,1.528)}
\gppoint{gp mark 6}{(3.489,1.528)}
\gppoint{gp mark 6}{(3.513,1.528)}
\gppoint{gp mark 6}{(10.862,7.452)}
\node[gp node right] at (10.109,7.002) {Algorithm 3 ($m=400$)};
\draw[gp path] (10.330,7.002)--(11.394,7.002);
\draw[gp path] (2.177,7.065)--(2.167,7.065)--(2.225,5.930)--(2.294,4.676)--(2.561,1.528)%
  --(2.600,1.528)--(2.572,1.528)--(2.566,1.528);
\gppoint{gp mark 4}{(2.177,7.065)}
\gppoint{gp mark 4}{(2.167,7.065)}
\gppoint{gp mark 4}{(2.225,5.930)}
\gppoint{gp mark 4}{(2.294,4.676)}
\gppoint{gp mark 4}{(2.561,1.528)}
\gppoint{gp mark 4}{(2.600,1.528)}
\gppoint{gp mark 4}{(2.572,1.528)}
\gppoint{gp mark 4}{(2.566,1.528)}
\gppoint{gp mark 4}{(10.862,7.002)}
\node[gp node right] at (10.109,6.552) {Algorithm 1 ($m=800$)};
\draw[gp path] (10.330,6.552)--(11.394,6.552);
\draw[gp path] (3.731,7.524)--(3.956,6.832)--(4.791,5.232)--(5.035,2.356)--(6.087,1.555)%
  --(6.064,1.555)--(6.100,1.555)--(5.849,1.555);
\gppoint{gp mark 9}{(3.731,7.524)}
\gppoint{gp mark 9}{(3.956,6.832)}
\gppoint{gp mark 9}{(4.791,5.232)}
\gppoint{gp mark 9}{(5.035,2.356)}
\gppoint{gp mark 9}{(6.087,1.555)}
\gppoint{gp mark 9}{(6.064,1.555)}
\gppoint{gp mark 9}{(6.100,1.555)}
\gppoint{gp mark 9}{(5.849,1.555)}
\gppoint{gp mark 9}{(10.862,6.552)}
\node[gp node right] at (10.109,6.102) {Algorithm 2 ($m=800$)};
\draw[gp path] (10.330,6.102)--(11.394,6.102);
\draw[gp path] (2.159,7.065)--(2.161,7.065)--(2.385,5.930)--(2.777,3.354)--(3.788,1.555)%
  --(6.033,1.555)--(6.289,1.555)--(6.238,1.555);
\gppoint{gp mark 7}{(2.159,7.065)}
\gppoint{gp mark 7}{(2.161,7.065)}
\gppoint{gp mark 7}{(2.385,5.930)}
\gppoint{gp mark 7}{(2.777,3.354)}
\gppoint{gp mark 7}{(3.788,1.555)}
\gppoint{gp mark 7}{(6.033,1.555)}
\gppoint{gp mark 7}{(6.289,1.555)}
\gppoint{gp mark 7}{(6.238,1.555)}
\gppoint{gp mark 7}{(10.862,6.102)}
\node[gp node right] at (10.109,5.652) {Algorithm 3 ($m=800$)};
\draw[gp path] (10.330,5.652)--(11.394,5.652);
\draw[gp path] (2.458,7.065)--(2.469,7.065)--(2.669,5.930)--(2.829,4.676)--(3.527,1.546)%
  --(3.457,1.546)--(3.560,1.546)--(3.520,1.546);
\gppoint{gp mark 5}{(2.458,7.065)}
\gppoint{gp mark 5}{(2.469,7.065)}
\gppoint{gp mark 5}{(2.669,5.930)}
\gppoint{gp mark 5}{(2.829,4.676)}
\gppoint{gp mark 5}{(3.527,1.546)}
\gppoint{gp mark 5}{(3.457,1.546)}
\gppoint{gp mark 5}{(3.560,1.546)}
\gppoint{gp mark 5}{(3.520,1.546)}
\gppoint{gp mark 5}{(10.862,5.652)}
\draw[gp path] (2.027,8.307)--(2.027,1.184)--(11.836,1.184)--(11.836,8.307)--cycle;
\gpdefrectangularnode{gp plot 1}{\pgfpoint{2.027cm}{1.184cm}}{\pgfpoint{11.836cm}{8.307cm}}
\end{tikzpicture}
}
\caption{Relative errors with respect to computation time [s] for $\log(A)$ with $\kappa(A)=10^1$.}
\label{fig:TimeError-1e1}
\scalebox{0.72}{\begin{tikzpicture}[gnuplot]
\tikzset{every node/.append style={font={\ttfamily\fontsize{12.0pt}{14.4pt}\selectfont}}}
\gpmonochromelines
\path (0.000,0.000) rectangle (12.500,8.750);
\gpcolor{color=gp lt color border}
\gpsetlinetype{gp lt border}
\gpsetdashtype{gp dt solid}
\gpsetlinewidth{1.00}
\draw[gp path] (2.027,1.184)--(2.117,1.184);
\draw[gp path] (11.836,1.184)--(11.746,1.184);
\draw[gp path] (2.027,1.629)--(2.207,1.629);
\draw[gp path] (11.836,1.629)--(11.656,1.629);
\node[gp node right] at (1.806,1.629) {$10^{-14}$};
\draw[gp path] (2.027,2.074)--(2.117,2.074);
\draw[gp path] (11.836,2.074)--(11.746,2.074);
\draw[gp path] (2.027,2.520)--(2.207,2.520);
\draw[gp path] (11.836,2.520)--(11.656,2.520);
\node[gp node right] at (1.806,2.520) {$10^{-12}$};
\draw[gp path] (2.027,2.965)--(2.117,2.965);
\draw[gp path] (11.836,2.965)--(11.746,2.965);
\draw[gp path] (2.027,3.410)--(2.207,3.410);
\draw[gp path] (11.836,3.410)--(11.656,3.410);
\node[gp node right] at (1.806,3.410) {$10^{-10}$};
\draw[gp path] (2.027,3.855)--(2.117,3.855);
\draw[gp path] (11.836,3.855)--(11.746,3.855);
\draw[gp path] (2.027,4.300)--(2.207,4.300);
\draw[gp path] (11.836,4.300)--(11.656,4.300);
\node[gp node right] at (1.806,4.300) {$10^{-8}$};
\draw[gp path] (2.027,4.746)--(2.117,4.746);
\draw[gp path] (11.836,4.746)--(11.746,4.746);
\draw[gp path] (2.027,5.191)--(2.207,5.191);
\draw[gp path] (11.836,5.191)--(11.656,5.191);
\node[gp node right] at (1.806,5.191) {$10^{-6}$};
\draw[gp path] (2.027,5.636)--(2.117,5.636);
\draw[gp path] (11.836,5.636)--(11.746,5.636);
\draw[gp path] (2.027,6.081)--(2.207,6.081);
\draw[gp path] (11.836,6.081)--(11.656,6.081);
\node[gp node right] at (1.806,6.081) {$10^{-4}$};
\draw[gp path] (2.027,6.526)--(2.117,6.526);
\draw[gp path] (11.836,6.526)--(11.746,6.526);
\draw[gp path] (2.027,6.971)--(2.207,6.971);
\draw[gp path] (11.836,6.971)--(11.656,6.971);
\node[gp node right] at (1.806,6.971) {$10^{-2}$};
\draw[gp path] (2.027,7.417)--(2.117,7.417);
\draw[gp path] (11.836,7.417)--(11.746,7.417);
\draw[gp path] (2.027,7.862)--(2.207,7.862);
\draw[gp path] (11.836,7.862)--(11.656,7.862);
\node[gp node right] at (1.806,7.862) {$10^{0}$};
\draw[gp path] (2.027,8.307)--(2.117,8.307);
\draw[gp path] (11.836,8.307)--(11.746,8.307);
\draw[gp path] (2.027,1.184)--(2.027,1.364);
\draw[gp path] (2.027,8.307)--(2.027,8.127);
\node[gp node center] at (2.027,0.814) {$0$};
\draw[gp path] (3.428,1.184)--(3.428,1.364);
\draw[gp path] (3.428,8.307)--(3.428,8.127);
\node[gp node center] at (3.428,0.814) {$1$};
\draw[gp path] (4.830,1.184)--(4.830,1.364);
\draw[gp path] (4.830,8.307)--(4.830,8.127);
\node[gp node center] at (4.830,0.814) {$2$};
\draw[gp path] (6.231,1.184)--(6.231,1.364);
\draw[gp path] (6.231,8.307)--(6.231,8.127);
\node[gp node center] at (6.231,0.814) {$3$};
\draw[gp path] (7.632,1.184)--(7.632,1.364);
\draw[gp path] (7.632,8.307)--(7.632,8.127);
\node[gp node center] at (7.632,0.814) {$4$};
\draw[gp path] (9.033,1.184)--(9.033,1.364);
\draw[gp path] (9.033,8.307)--(9.033,8.127);
\node[gp node center] at (9.033,0.814) {$5$};
\draw[gp path] (10.435,1.184)--(10.435,1.364);
\draw[gp path] (10.435,8.307)--(10.435,8.127);
\node[gp node center] at (10.435,0.814) {$6$};
\draw[gp path] (11.836,1.184)--(11.836,1.364);
\draw[gp path] (11.836,8.307)--(11.836,8.127);
\node[gp node center] at (11.836,0.814) {$7$};
\draw[gp path] (2.027,8.307)--(2.027,1.184)--(11.836,1.184)--(11.836,8.307)--cycle;
\node[gp node center,rotate=-270] at (0.295,4.745) {Relative error (2-norm)};
\node[gp node center] at (6.931,0.259) {Computation time [s]};
\node[gp node right] at (10.109,7.902) {Algorithm 1 ($m=400$)};
\draw[gp path] (10.330,7.902)--(11.394,7.902);
\draw[gp path] (2.543,7.652)--(2.649,7.216)--(2.658,6.496)--(2.863,4.807)--(3.064,1.999)%
  --(3.580,1.978)--(3.610,1.978)--(3.870,1.978);
\gpsetpointsize{4.00}
\gppoint{gp mark 8}{(2.543,7.652)}
\gppoint{gp mark 8}{(2.649,7.216)}
\gppoint{gp mark 8}{(2.658,6.496)}
\gppoint{gp mark 8}{(2.863,4.807)}
\gppoint{gp mark 8}{(3.064,1.999)}
\gppoint{gp mark 8}{(3.580,1.978)}
\gppoint{gp mark 8}{(3.610,1.978)}
\gppoint{gp mark 8}{(3.870,1.978)}
\gppoint{gp mark 8}{(10.862,7.902)}
\node[gp node right] at (10.109,7.452) {Algorithm 2 ($m=400$)};
\draw[gp path] (10.330,7.452)--(11.394,7.452);
\draw[gp path] (2.074,7.392)--(2.072,7.392)--(2.145,6.665)--(2.295,5.465)--(2.612,2.816)%
  --(3.464,1.974)--(3.475,1.974)--(5.249,1.976);
\gppoint{gp mark 6}{(2.074,7.392)}
\gppoint{gp mark 6}{(2.072,7.392)}
\gppoint{gp mark 6}{(2.145,6.665)}
\gppoint{gp mark 6}{(2.295,5.465)}
\gppoint{gp mark 6}{(2.612,2.816)}
\gppoint{gp mark 6}{(3.464,1.974)}
\gppoint{gp mark 6}{(3.475,1.974)}
\gppoint{gp mark 6}{(5.249,1.976)}
\gppoint{gp mark 6}{(10.862,7.452)}
\node[gp node right] at (10.109,7.002) {Algorithm 3 ($m=400$)};
\draw[gp path] (10.330,7.002)--(11.394,7.002);
\draw[gp path] (2.152,7.392)--(2.148,7.392)--(2.223,6.665)--(2.297,6.047)--(2.586,3.445)%
  --(2.828,1.974)--(2.835,1.974)--(2.842,1.974);
\gppoint{gp mark 4}{(2.152,7.392)}
\gppoint{gp mark 4}{(2.148,7.392)}
\gppoint{gp mark 4}{(2.223,6.665)}
\gppoint{gp mark 4}{(2.297,6.047)}
\gppoint{gp mark 4}{(2.586,3.445)}
\gppoint{gp mark 4}{(2.828,1.974)}
\gppoint{gp mark 4}{(2.835,1.974)}
\gppoint{gp mark 4}{(2.842,1.974)}
\gppoint{gp mark 4}{(10.862,7.002)}
\node[gp node right] at (10.109,6.552) {Algorithm 1 ($m=800$)};
\draw[gp path] (10.330,6.552)--(11.394,6.552);
\draw[gp path] (3.668,7.652)--(3.881,7.216)--(4.452,6.496)--(5.070,4.807)--(5.940,2.040)%
  --(8.122,2.031)--(7.566,2.031)--(7.507,2.031);
\gppoint{gp mark 9}{(3.668,7.652)}
\gppoint{gp mark 9}{(3.881,7.216)}
\gppoint{gp mark 9}{(4.452,6.496)}
\gppoint{gp mark 9}{(5.070,4.807)}
\gppoint{gp mark 9}{(5.940,2.040)}
\gppoint{gp mark 9}{(8.122,2.031)}
\gppoint{gp mark 9}{(7.566,2.031)}
\gppoint{gp mark 9}{(7.507,2.031)}
\gppoint{gp mark 9}{(10.862,6.552)}
\node[gp node right] at (10.109,6.102) {Algorithm 2 ($m=800$)};
\draw[gp path] (10.330,6.102)--(11.394,6.102);
\draw[gp path] (2.161,7.392)--(2.162,7.392)--(2.374,6.665)--(2.751,5.465)--(3.860,2.816)%
  --(6.243,2.036)--(5.993,2.036)--(11.228,2.033);
\gppoint{gp mark 7}{(2.161,7.392)}
\gppoint{gp mark 7}{(2.162,7.392)}
\gppoint{gp mark 7}{(2.374,6.665)}
\gppoint{gp mark 7}{(2.751,5.465)}
\gppoint{gp mark 7}{(3.860,2.816)}
\gppoint{gp mark 7}{(6.243,2.036)}
\gppoint{gp mark 7}{(5.993,2.036)}
\gppoint{gp mark 7}{(11.228,2.033)}
\gppoint{gp mark 7}{(10.862,6.102)}
\node[gp node right] at (10.109,5.652) {Algorithm 3 ($m=800$)};
\draw[gp path] (10.330,5.652)--(11.394,5.652);
\draw[gp path] (2.438,7.392)--(2.431,7.392)--(2.637,6.665)--(2.787,6.048)--(3.758,3.445)%
  --(4.784,2.038)--(4.922,2.038)--(4.861,2.038);
\gppoint{gp mark 5}{(2.438,7.392)}
\gppoint{gp mark 5}{(2.431,7.392)}
\gppoint{gp mark 5}{(2.637,6.665)}
\gppoint{gp mark 5}{(2.787,6.048)}
\gppoint{gp mark 5}{(3.758,3.445)}
\gppoint{gp mark 5}{(4.784,2.038)}
\gppoint{gp mark 5}{(4.922,2.038)}
\gppoint{gp mark 5}{(4.861,2.038)}
\gppoint{gp mark 5}{(10.862,5.652)}
\draw[gp path] (2.027,8.307)--(2.027,1.184)--(11.836,1.184)--(11.836,8.307)--cycle;
\gpdefrectangularnode{gp plot 1}{\pgfpoint{2.027cm}{1.184cm}}{\pgfpoint{11.836cm}{8.307cm}}
\end{tikzpicture}
}
\caption{Relative errors with respect to computation time [s] for $\log(A)$ with $\kappa(A)=10^4$.}
\label{fig:TimeError-1e4}
\scalebox{0.72}{\begin{tikzpicture}[gnuplot]
\tikzset{every node/.append style={font={\ttfamily\fontsize{12.0pt}{14.4pt}\selectfont}}}
\gpmonochromelines
\path (0.000,0.000) rectangle (12.500,8.750);
\gpcolor{color=gp lt color border}
\gpsetlinetype{gp lt border}
\gpsetdashtype{gp dt solid}
\gpsetlinewidth{1.00}
\draw[gp path] (2.027,1.184)--(2.117,1.184);
\draw[gp path] (11.836,1.184)--(11.746,1.184);
\draw[gp path] (2.027,1.629)--(2.207,1.629);
\draw[gp path] (11.836,1.629)--(11.656,1.629);
\node[gp node right] at (1.806,1.629) {$10^{-14}$};
\draw[gp path] (2.027,2.074)--(2.117,2.074);
\draw[gp path] (11.836,2.074)--(11.746,2.074);
\draw[gp path] (2.027,2.520)--(2.207,2.520);
\draw[gp path] (11.836,2.520)--(11.656,2.520);
\node[gp node right] at (1.806,2.520) {$10^{-12}$};
\draw[gp path] (2.027,2.965)--(2.117,2.965);
\draw[gp path] (11.836,2.965)--(11.746,2.965);
\draw[gp path] (2.027,3.410)--(2.207,3.410);
\draw[gp path] (11.836,3.410)--(11.656,3.410);
\node[gp node right] at (1.806,3.410) {$10^{-10}$};
\draw[gp path] (2.027,3.855)--(2.117,3.855);
\draw[gp path] (11.836,3.855)--(11.746,3.855);
\draw[gp path] (2.027,4.300)--(2.207,4.300);
\draw[gp path] (11.836,4.300)--(11.656,4.300);
\node[gp node right] at (1.806,4.300) {$10^{-8}$};
\draw[gp path] (2.027,4.746)--(2.117,4.746);
\draw[gp path] (11.836,4.746)--(11.746,4.746);
\draw[gp path] (2.027,5.191)--(2.207,5.191);
\draw[gp path] (11.836,5.191)--(11.656,5.191);
\node[gp node right] at (1.806,5.191) {$10^{-6}$};
\draw[gp path] (2.027,5.636)--(2.117,5.636);
\draw[gp path] (11.836,5.636)--(11.746,5.636);
\draw[gp path] (2.027,6.081)--(2.207,6.081);
\draw[gp path] (11.836,6.081)--(11.656,6.081);
\node[gp node right] at (1.806,6.081) {$10^{-4}$};
\draw[gp path] (2.027,6.526)--(2.117,6.526);
\draw[gp path] (11.836,6.526)--(11.746,6.526);
\draw[gp path] (2.027,6.971)--(2.207,6.971);
\draw[gp path] (11.836,6.971)--(11.656,6.971);
\node[gp node right] at (1.806,6.971) {$10^{-2}$};
\draw[gp path] (2.027,7.417)--(2.117,7.417);
\draw[gp path] (11.836,7.417)--(11.746,7.417);
\draw[gp path] (2.027,7.862)--(2.207,7.862);
\draw[gp path] (11.836,7.862)--(11.656,7.862);
\node[gp node right] at (1.806,7.862) {$10^{0}$};
\draw[gp path] (2.027,8.307)--(2.117,8.307);
\draw[gp path] (11.836,8.307)--(11.746,8.307);
\draw[gp path] (2.027,1.184)--(2.027,1.364);
\draw[gp path] (2.027,8.307)--(2.027,8.127);
\node[gp node center] at (2.027,0.814) {$0$};
\draw[gp path] (3.428,1.184)--(3.428,1.364);
\draw[gp path] (3.428,8.307)--(3.428,8.127);
\node[gp node center] at (3.428,0.814) {$1$};
\draw[gp path] (4.830,1.184)--(4.830,1.364);
\draw[gp path] (4.830,8.307)--(4.830,8.127);
\node[gp node center] at (4.830,0.814) {$2$};
\draw[gp path] (6.231,1.184)--(6.231,1.364);
\draw[gp path] (6.231,8.307)--(6.231,8.127);
\node[gp node center] at (6.231,0.814) {$3$};
\draw[gp path] (7.632,1.184)--(7.632,1.364);
\draw[gp path] (7.632,8.307)--(7.632,8.127);
\node[gp node center] at (7.632,0.814) {$4$};
\draw[gp path] (9.033,1.184)--(9.033,1.364);
\draw[gp path] (9.033,8.307)--(9.033,8.127);
\node[gp node center] at (9.033,0.814) {$5$};
\draw[gp path] (10.435,1.184)--(10.435,1.364);
\draw[gp path] (10.435,8.307)--(10.435,8.127);
\node[gp node center] at (10.435,0.814) {$6$};
\draw[gp path] (11.836,1.184)--(11.836,1.364);
\draw[gp path] (11.836,8.307)--(11.836,8.127);
\node[gp node center] at (11.836,0.814) {$7$};
\draw[gp path] (2.027,8.307)--(2.027,1.184)--(11.836,1.184)--(11.836,8.307)--cycle;
\node[gp node center,rotate=-270] at (0.295,4.745) {Relative error (2-norm)};
\node[gp node center] at (6.931,0.259) {Computation time [s]};
\node[gp node right] at (10.109,7.902) {Algorithm 1 ($m=400$)};
\draw[gp path] (10.330,7.902)--(11.394,7.902);
\draw[gp path] (2.432,7.656)--(2.518,7.404)--(2.607,6.949)--(2.773,5.939)--(3.072,3.908)%
  --(3.603,3.064)--(4.840,3.055)--(4.752,3.055);
\gpsetpointsize{4.00}
\gppoint{gp mark 8}{(2.432,7.656)}
\gppoint{gp mark 8}{(2.518,7.404)}
\gppoint{gp mark 8}{(2.607,6.949)}
\gppoint{gp mark 8}{(2.773,5.939)}
\gppoint{gp mark 8}{(3.072,3.908)}
\gppoint{gp mark 8}{(3.603,3.064)}
\gppoint{gp mark 8}{(4.840,3.055)}
\gppoint{gp mark 8}{(4.752,3.055)}
\gppoint{gp mark 8}{(10.862,7.902)}
\node[gp node right] at (10.109,7.452) {Algorithm 2 ($m=400$)};
\draw[gp path] (10.330,7.452)--(11.394,7.452);
\draw[gp path] (2.071,7.508)--(2.139,7.015)--(2.265,6.182)--(2.669,4.511)--(3.501,3.061)%
  --(3.483,3.061)--(5.801,3.042);
\gppoint{gp mark 6}{(2.071,7.508)}
\gppoint{gp mark 6}{(2.071,7.508)}
\gppoint{gp mark 6}{(2.139,7.015)}
\gppoint{gp mark 6}{(2.265,6.182)}
\gppoint{gp mark 6}{(2.669,4.511)}
\gppoint{gp mark 6}{(3.501,3.061)}
\gppoint{gp mark 6}{(3.483,3.061)}
\gppoint{gp mark 6}{(5.801,3.042)}
\gppoint{gp mark 6}{(10.862,7.452)}
\node[gp node right] at (10.109,7.002) {Algorithm 3 ($m=400$)};
\draw[gp path] (10.330,7.002)--(11.394,7.002);
\draw[gp path] (2.148,7.508)--(2.154,7.508)--(2.241,7.015)--(2.300,6.655)--(2.595,4.877)%
  --(3.173,3.065)--(3.474,3.076)--(3.549,3.076);
\gppoint{gp mark 4}{(2.148,7.508)}
\gppoint{gp mark 4}{(2.154,7.508)}
\gppoint{gp mark 4}{(2.241,7.015)}
\gppoint{gp mark 4}{(2.300,6.655)}
\gppoint{gp mark 4}{(2.595,4.877)}
\gppoint{gp mark 4}{(3.173,3.065)}
\gppoint{gp mark 4}{(3.474,3.076)}
\gppoint{gp mark 4}{(3.549,3.076)}
\gppoint{gp mark 4}{(10.862,7.002)}
\node[gp node right] at (10.109,6.552) {Algorithm 1 ($m=800$)};
\draw[gp path] (10.330,6.552)--(11.394,6.552);
\draw[gp path] (3.480,7.656)--(3.721,7.404)--(4.155,6.949)--(4.750,5.939)--(5.600,3.908)%
  --(7.435,3.109)--(11.103,3.102)--(10.852,3.102);
\gppoint{gp mark 9}{(3.480,7.656)}
\gppoint{gp mark 9}{(3.721,7.404)}
\gppoint{gp mark 9}{(4.155,6.949)}
\gppoint{gp mark 9}{(4.750,5.939)}
\gppoint{gp mark 9}{(5.600,3.908)}
\gppoint{gp mark 9}{(7.435,3.109)}
\gppoint{gp mark 9}{(11.103,3.102)}
\gppoint{gp mark 9}{(10.852,3.102)}
\gppoint{gp mark 9}{(10.862,6.552)}
\node[gp node right] at (10.109,6.102) {Algorithm 2 ($m=800$)};
\draw[gp path] (10.330,6.102)--(11.394,6.102);
\draw[gp path] (2.162,7.508)--(2.174,7.508)--(2.386,7.015)--(2.729,6.182)--(3.720,4.511)%
  --(6.046,3.104)--(6.012,3.104)--(11.263,3.096);
\gppoint{gp mark 7}{(2.162,7.508)}
\gppoint{gp mark 7}{(2.174,7.508)}
\gppoint{gp mark 7}{(2.386,7.015)}
\gppoint{gp mark 7}{(2.729,6.182)}
\gppoint{gp mark 7}{(3.720,4.511)}
\gppoint{gp mark 7}{(6.046,3.104)}
\gppoint{gp mark 7}{(6.012,3.104)}
\gppoint{gp mark 7}{(11.263,3.096)}
\gppoint{gp mark 7}{(10.862,6.102)}
\node[gp node right] at (10.109,5.652) {Algorithm 3 ($m=800$)};
\draw[gp path] (10.330,5.652)--(11.394,5.652);
\draw[gp path] (2.420,7.508)--(2.417,7.508)--(2.640,7.015)--(2.793,6.655)--(3.729,4.877)%
  --(5.107,3.133)--(5.919,3.123)--(5.929,3.123);
\gppoint{gp mark 5}{(2.420,7.508)}
\gppoint{gp mark 5}{(2.417,7.508)}
\gppoint{gp mark 5}{(2.640,7.015)}
\gppoint{gp mark 5}{(2.793,6.655)}
\gppoint{gp mark 5}{(3.729,4.877)}
\gppoint{gp mark 5}{(5.107,3.133)}
\gppoint{gp mark 5}{(5.919,3.123)}
\gppoint{gp mark 5}{(5.929,3.123)}
\gppoint{gp mark 5}{(10.862,5.652)}
\draw[gp path] (2.027,8.307)--(2.027,1.184)--(11.836,1.184)--(11.836,8.307)--cycle;
\gpdefrectangularnode{gp plot 1}{\pgfpoint{2.027cm}{1.184cm}}{\pgfpoint{11.836cm}{8.307cm}}
\end{tikzpicture}
}
\caption{Relative errors with respect to computation time [s] for $\log(A)$ with $\kappa(A)=10^7$.}
\label{fig:TimeError-1e7}
\end{figure}

Figures~\ref{fig:TimeError-1e1}--\ref{fig:TimeError-1e7} plot the relative error against elapsed computation time
for matrix dimensions $m=400$ and $m=800$.
At the beginning of the time-error plots, Algorithms~\ref{alg:new-adaptive}
and~\ref{alg:new-adaptive-error-bound} attain substantially lower errors
than Algorithm~\ref{alg:adaptive-Tatsuoka} at comparable elapsed times.
Algorithm~\ref{alg:adaptive-Tatsuoka} does not close this gap over the
plotted time range, despite its slightly faster convergence as the
number of quadrature nodes increases (as seen in
Figs.~\ref{fig:PointError-1e1}--\ref{fig:PointError-1e7}).
The gap becomes more pronounced as the matrix dimension increases from
$m=400$ to $m=800$, showing that convergence as a function of the number
of quadrature nodes alone does not determine time-to-accuracy.

\subsection{Termination time}
\label{subsec:numerical-termination-time}

\begin{figure}[htpb]
\centering
\scalebox{0.72}{\begin{tikzpicture}[gnuplot]
\tikzset{every node/.append style={font={\ttfamily\fontsize{12.0pt}{14.4pt}\selectfont}}}
\gpmonochromelines
\path (0.000,0.000) rectangle (12.500,8.750);
\gpcolor{color=gp lt color border}
\gpsetlinetype{gp lt border}
\gpsetdashtype{gp dt solid}
\gpsetlinewidth{1.00}
\draw[gp path] (2.027,1.184)--(2.207,1.184);
\draw[gp path] (11.836,1.184)--(11.656,1.184);
\node[gp node right] at (1.806,1.184) {$0.001$};
\draw[gp path] (2.027,1.613)--(2.117,1.613);
\draw[gp path] (11.836,1.613)--(11.746,1.613);
\draw[gp path] (2.027,2.180)--(2.117,2.180);
\draw[gp path] (11.836,2.180)--(11.746,2.180);
\draw[gp path] (2.027,2.471)--(2.117,2.471);
\draw[gp path] (11.836,2.471)--(11.746,2.471);
\draw[gp path] (2.027,2.609)--(2.207,2.609);
\draw[gp path] (11.836,2.609)--(11.656,2.609);
\node[gp node right] at (1.806,2.609) {$0.01$};
\draw[gp path] (2.027,3.037)--(2.117,3.037);
\draw[gp path] (11.836,3.037)--(11.746,3.037);
\draw[gp path] (2.027,3.604)--(2.117,3.604);
\draw[gp path] (11.836,3.604)--(11.746,3.604);
\draw[gp path] (2.027,3.895)--(2.117,3.895);
\draw[gp path] (11.836,3.895)--(11.746,3.895);
\draw[gp path] (2.027,4.033)--(2.207,4.033);
\draw[gp path] (11.836,4.033)--(11.656,4.033);
\node[gp node right] at (1.806,4.033) {$0.1$};
\draw[gp path] (2.027,4.462)--(2.117,4.462);
\draw[gp path] (11.836,4.462)--(11.746,4.462);
\draw[gp path] (2.027,5.029)--(2.117,5.029);
\draw[gp path] (11.836,5.029)--(11.746,5.029);
\draw[gp path] (2.027,5.320)--(2.117,5.320);
\draw[gp path] (11.836,5.320)--(11.746,5.320);
\draw[gp path] (2.027,5.458)--(2.207,5.458);
\draw[gp path] (11.836,5.458)--(11.656,5.458);
\node[gp node right] at (1.806,5.458) {$1$};
\draw[gp path] (2.027,5.887)--(2.117,5.887);
\draw[gp path] (11.836,5.887)--(11.746,5.887);
\draw[gp path] (2.027,6.454)--(2.117,6.454);
\draw[gp path] (11.836,6.454)--(11.746,6.454);
\draw[gp path] (2.027,6.744)--(2.117,6.744);
\draw[gp path] (11.836,6.744)--(11.746,6.744);
\draw[gp path] (2.027,6.882)--(2.207,6.882);
\draw[gp path] (11.836,6.882)--(11.656,6.882);
\node[gp node right] at (1.806,6.882) {$10$};
\draw[gp path] (2.027,7.311)--(2.117,7.311);
\draw[gp path] (11.836,7.311)--(11.746,7.311);
\draw[gp path] (2.027,7.878)--(2.117,7.878);
\draw[gp path] (11.836,7.878)--(11.746,7.878);
\draw[gp path] (2.027,8.169)--(2.117,8.169);
\draw[gp path] (11.836,8.169)--(11.746,8.169);
\draw[gp path] (2.027,8.307)--(2.207,8.307);
\draw[gp path] (11.836,8.307)--(11.656,8.307);
\node[gp node right] at (1.806,8.307) {$100$};
\draw[gp path] (2.027,1.184)--(2.027,1.364);
\draw[gp path] (2.027,8.307)--(2.027,8.127);
\node[gp node center] at (2.027,0.814) {20};
\draw[gp path] (2.891,1.184)--(2.891,1.274);
\draw[gp path] (2.891,8.307)--(2.891,8.217);
\draw[gp path] (3.503,1.184)--(3.503,1.274);
\draw[gp path] (3.503,8.307)--(3.503,8.217);
\draw[gp path] (3.979,1.184)--(3.979,1.364);
\draw[gp path] (3.979,8.307)--(3.979,8.127);
\node[gp node center] at (3.979,0.814) {50};
\draw[gp path] (4.367,1.184)--(4.367,1.274);
\draw[gp path] (4.367,8.307)--(4.367,8.217);
\draw[gp path] (4.695,1.184)--(4.695,1.274);
\draw[gp path] (4.695,8.307)--(4.695,8.217);
\draw[gp path] (4.980,1.184)--(4.980,1.274);
\draw[gp path] (4.980,8.307)--(4.980,8.217);
\draw[gp path] (5.231,1.184)--(5.231,1.274);
\draw[gp path] (5.231,8.307)--(5.231,8.217);
\draw[gp path] (5.455,1.184)--(5.455,1.364);
\draw[gp path] (5.455,8.307)--(5.455,8.127);
\node[gp node center] at (5.455,0.814) {100};
\draw[gp path] (6.932,1.184)--(6.932,1.364);
\draw[gp path] (6.932,8.307)--(6.932,8.127);
\node[gp node center] at (6.932,0.814) {200};
\draw[gp path] (7.795,1.184)--(7.795,1.274);
\draw[gp path] (7.795,8.307)--(7.795,8.217);
\draw[gp path] (8.408,1.184)--(8.408,1.274);
\draw[gp path] (8.408,8.307)--(8.408,8.217);
\draw[gp path] (8.883,1.184)--(8.883,1.364);
\draw[gp path] (8.883,8.307)--(8.883,8.127);
\node[gp node center] at (8.883,0.814) {500};
\draw[gp path] (9.272,1.184)--(9.272,1.274);
\draw[gp path] (9.272,8.307)--(9.272,8.217);
\draw[gp path] (9.600,1.184)--(9.600,1.274);
\draw[gp path] (9.600,8.307)--(9.600,8.217);
\draw[gp path] (9.884,1.184)--(9.884,1.274);
\draw[gp path] (9.884,8.307)--(9.884,8.217);
\draw[gp path] (10.135,1.184)--(10.135,1.274);
\draw[gp path] (10.135,8.307)--(10.135,8.217);
\draw[gp path] (10.360,1.184)--(10.360,1.364);
\draw[gp path] (10.360,8.307)--(10.360,8.127);
\node[gp node center] at (10.360,0.814) {1000};
\draw[gp path] (11.836,1.184)--(11.836,1.364);
\draw[gp path] (11.836,8.307)--(11.836,8.127);
\node[gp node center] at (11.836,0.814) {2000};
\draw[gp path] (2.027,8.307)--(2.027,1.184)--(11.836,1.184)--(11.836,8.307)--cycle;
\node[gp node center,rotate=-270] at (0.295,4.745) {Computation time [s]};
\node[gp node center] at (6.931,0.259) {Size of the matrix $A$ ($m$)};
\node[gp node right] at (10.109,2.489) {Algorithm 1};
\draw[gp path] (10.330,2.489)--(11.394,2.489);
\draw[gp path] (3.979,2.630)--(5.455,3.110)--(6.932,4.282)--(8.408,5.257)--(9.884,6.079)%
  --(11.361,6.990);
\gpsetpointsize{4.00}
\gppoint{gp mark 8}{(3.979,2.630)}
\gppoint{gp mark 8}{(5.455,3.110)}
\gppoint{gp mark 8}{(6.932,4.282)}
\gppoint{gp mark 8}{(8.408,5.257)}
\gppoint{gp mark 8}{(9.884,6.079)}
\gppoint{gp mark 8}{(11.361,6.990)}
\gppoint{gp mark 8}{(10.862,2.489)}
\node[gp node right] at (10.109,2.039) {Algorithm 2};
\draw[gp path] (10.330,2.039)--(11.394,2.039);
\draw[gp path] (3.979,3.060)--(5.455,3.586)--(6.932,4.381)--(8.408,5.494)--(9.884,6.139)%
  --(11.361,7.083);
\gppoint{gp mark 6}{(3.979,3.060)}
\gppoint{gp mark 6}{(5.455,3.586)}
\gppoint{gp mark 6}{(6.932,4.381)}
\gppoint{gp mark 6}{(8.408,5.494)}
\gppoint{gp mark 6}{(9.884,6.139)}
\gppoint{gp mark 6}{(11.361,7.083)}
\gppoint{gp mark 6}{(10.862,2.039)}
\node[gp node right] at (10.109,1.589) {Algorithm 3};
\draw[gp path] (10.330,1.589)--(11.394,1.589);
\draw[gp path] (3.979,2.318)--(5.455,2.836)--(6.932,3.759)--(8.408,4.866)--(9.884,5.497)%
  --(11.361,6.267);
\gppoint{gp mark 4}{(3.979,2.318)}
\gppoint{gp mark 4}{(5.455,2.836)}
\gppoint{gp mark 4}{(6.932,3.759)}
\gppoint{gp mark 4}{(8.408,4.866)}
\gppoint{gp mark 4}{(9.884,5.497)}
\gppoint{gp mark 4}{(11.361,6.267)}
\gppoint{gp mark 4}{(10.862,1.589)}
\draw[gp path] (2.027,8.307)--(2.027,1.184)--(11.836,1.184)--(11.836,8.307)--cycle;
\gpdefrectangularnode{gp plot 1}{\pgfpoint{2.027cm}{1.184cm}}{\pgfpoint{11.836cm}{8.307cm}}
\end{tikzpicture}
}
\caption{Computation time required for the algorithm to terminate for $\kappa(A) = 10^1$.}
\label{fig:StopTime-1e1}
\scalebox{0.72}{\begin{tikzpicture}[gnuplot]
\tikzset{every node/.append style={font={\ttfamily\fontsize{12.0pt}{14.4pt}\selectfont}}}
\gpmonochromelines
\path (0.000,0.000) rectangle (12.500,8.750);
\gpcolor{color=gp lt color border}
\gpsetlinetype{gp lt border}
\gpsetdashtype{gp dt solid}
\gpsetlinewidth{1.00}
\draw[gp path] (2.027,1.184)--(2.207,1.184);
\draw[gp path] (11.836,1.184)--(11.656,1.184);
\node[gp node right] at (1.806,1.184) {$0.001$};
\draw[gp path] (2.027,1.613)--(2.117,1.613);
\draw[gp path] (11.836,1.613)--(11.746,1.613);
\draw[gp path] (2.027,2.180)--(2.117,2.180);
\draw[gp path] (11.836,2.180)--(11.746,2.180);
\draw[gp path] (2.027,2.471)--(2.117,2.471);
\draw[gp path] (11.836,2.471)--(11.746,2.471);
\draw[gp path] (2.027,2.609)--(2.207,2.609);
\draw[gp path] (11.836,2.609)--(11.656,2.609);
\node[gp node right] at (1.806,2.609) {$0.01$};
\draw[gp path] (2.027,3.037)--(2.117,3.037);
\draw[gp path] (11.836,3.037)--(11.746,3.037);
\draw[gp path] (2.027,3.604)--(2.117,3.604);
\draw[gp path] (11.836,3.604)--(11.746,3.604);
\draw[gp path] (2.027,3.895)--(2.117,3.895);
\draw[gp path] (11.836,3.895)--(11.746,3.895);
\draw[gp path] (2.027,4.033)--(2.207,4.033);
\draw[gp path] (11.836,4.033)--(11.656,4.033);
\node[gp node right] at (1.806,4.033) {$0.1$};
\draw[gp path] (2.027,4.462)--(2.117,4.462);
\draw[gp path] (11.836,4.462)--(11.746,4.462);
\draw[gp path] (2.027,5.029)--(2.117,5.029);
\draw[gp path] (11.836,5.029)--(11.746,5.029);
\draw[gp path] (2.027,5.320)--(2.117,5.320);
\draw[gp path] (11.836,5.320)--(11.746,5.320);
\draw[gp path] (2.027,5.458)--(2.207,5.458);
\draw[gp path] (11.836,5.458)--(11.656,5.458);
\node[gp node right] at (1.806,5.458) {$1$};
\draw[gp path] (2.027,5.887)--(2.117,5.887);
\draw[gp path] (11.836,5.887)--(11.746,5.887);
\draw[gp path] (2.027,6.454)--(2.117,6.454);
\draw[gp path] (11.836,6.454)--(11.746,6.454);
\draw[gp path] (2.027,6.744)--(2.117,6.744);
\draw[gp path] (11.836,6.744)--(11.746,6.744);
\draw[gp path] (2.027,6.882)--(2.207,6.882);
\draw[gp path] (11.836,6.882)--(11.656,6.882);
\node[gp node right] at (1.806,6.882) {$10$};
\draw[gp path] (2.027,7.311)--(2.117,7.311);
\draw[gp path] (11.836,7.311)--(11.746,7.311);
\draw[gp path] (2.027,7.878)--(2.117,7.878);
\draw[gp path] (11.836,7.878)--(11.746,7.878);
\draw[gp path] (2.027,8.169)--(2.117,8.169);
\draw[gp path] (11.836,8.169)--(11.746,8.169);
\draw[gp path] (2.027,8.307)--(2.207,8.307);
\draw[gp path] (11.836,8.307)--(11.656,8.307);
\node[gp node right] at (1.806,8.307) {$100$};
\draw[gp path] (2.027,1.184)--(2.027,1.364);
\draw[gp path] (2.027,8.307)--(2.027,8.127);
\node[gp node center] at (2.027,0.814) {20};
\draw[gp path] (2.891,1.184)--(2.891,1.274);
\draw[gp path] (2.891,8.307)--(2.891,8.217);
\draw[gp path] (3.503,1.184)--(3.503,1.274);
\draw[gp path] (3.503,8.307)--(3.503,8.217);
\draw[gp path] (3.979,1.184)--(3.979,1.364);
\draw[gp path] (3.979,8.307)--(3.979,8.127);
\node[gp node center] at (3.979,0.814) {50};
\draw[gp path] (4.367,1.184)--(4.367,1.274);
\draw[gp path] (4.367,8.307)--(4.367,8.217);
\draw[gp path] (4.695,1.184)--(4.695,1.274);
\draw[gp path] (4.695,8.307)--(4.695,8.217);
\draw[gp path] (4.980,1.184)--(4.980,1.274);
\draw[gp path] (4.980,8.307)--(4.980,8.217);
\draw[gp path] (5.231,1.184)--(5.231,1.274);
\draw[gp path] (5.231,8.307)--(5.231,8.217);
\draw[gp path] (5.455,1.184)--(5.455,1.364);
\draw[gp path] (5.455,8.307)--(5.455,8.127);
\node[gp node center] at (5.455,0.814) {100};
\draw[gp path] (6.932,1.184)--(6.932,1.364);
\draw[gp path] (6.932,8.307)--(6.932,8.127);
\node[gp node center] at (6.932,0.814) {200};
\draw[gp path] (7.795,1.184)--(7.795,1.274);
\draw[gp path] (7.795,8.307)--(7.795,8.217);
\draw[gp path] (8.408,1.184)--(8.408,1.274);
\draw[gp path] (8.408,8.307)--(8.408,8.217);
\draw[gp path] (8.883,1.184)--(8.883,1.364);
\draw[gp path] (8.883,8.307)--(8.883,8.127);
\node[gp node center] at (8.883,0.814) {500};
\draw[gp path] (9.272,1.184)--(9.272,1.274);
\draw[gp path] (9.272,8.307)--(9.272,8.217);
\draw[gp path] (9.600,1.184)--(9.600,1.274);
\draw[gp path] (9.600,8.307)--(9.600,8.217);
\draw[gp path] (9.884,1.184)--(9.884,1.274);
\draw[gp path] (9.884,8.307)--(9.884,8.217);
\draw[gp path] (10.135,1.184)--(10.135,1.274);
\draw[gp path] (10.135,8.307)--(10.135,8.217);
\draw[gp path] (10.360,1.184)--(10.360,1.364);
\draw[gp path] (10.360,8.307)--(10.360,8.127);
\node[gp node center] at (10.360,0.814) {1000};
\draw[gp path] (11.836,1.184)--(11.836,1.364);
\draw[gp path] (11.836,8.307)--(11.836,8.127);
\node[gp node center] at (11.836,0.814) {2000};
\draw[gp path] (2.027,8.307)--(2.027,1.184)--(11.836,1.184)--(11.836,8.307)--cycle;
\node[gp node center,rotate=-270] at (0.295,4.745) {Computation time [s]};
\node[gp node center] at (6.931,0.259) {Size of the matrix $A$ ($m$)};
\node[gp node right] at (10.109,2.489) {Algorithm 1};
\draw[gp path] (10.330,2.489)--(11.394,2.489);
\draw[gp path] (3.979,2.996)--(5.455,3.526)--(6.932,4.544)--(8.408,5.627)--(9.884,6.302)%
  --(11.361,7.173);
\gpsetpointsize{4.00}
\gppoint{gp mark 8}{(3.979,2.996)}
\gppoint{gp mark 8}{(5.455,3.526)}
\gppoint{gp mark 8}{(6.932,4.544)}
\gppoint{gp mark 8}{(8.408,5.627)}
\gppoint{gp mark 8}{(9.884,6.302)}
\gppoint{gp mark 8}{(11.361,7.173)}
\gppoint{gp mark 8}{(10.862,2.489)}
\node[gp node right] at (10.109,2.039) {Algorithm 2};
\draw[gp path] (10.330,2.039)--(11.394,2.039);
\draw[gp path] (3.979,3.522)--(5.455,4.108)--(6.932,4.900)--(8.408,5.973)--(9.884,6.622)%
  --(11.361,7.533);
\gppoint{gp mark 6}{(3.979,3.522)}
\gppoint{gp mark 6}{(5.455,4.108)}
\gppoint{gp mark 6}{(6.932,4.900)}
\gppoint{gp mark 6}{(8.408,5.973)}
\gppoint{gp mark 6}{(9.884,6.622)}
\gppoint{gp mark 6}{(11.361,7.533)}
\gppoint{gp mark 6}{(10.862,2.039)}
\node[gp node right] at (10.109,1.589) {Algorithm 3};
\draw[gp path] (10.330,1.589)--(11.394,1.589);
\draw[gp path] (3.979,2.769)--(5.455,3.246)--(6.932,4.110)--(8.408,5.122)--(9.884,5.893)%
  --(11.361,6.689);
\gppoint{gp mark 4}{(3.979,2.769)}
\gppoint{gp mark 4}{(5.455,3.246)}
\gppoint{gp mark 4}{(6.932,4.110)}
\gppoint{gp mark 4}{(8.408,5.122)}
\gppoint{gp mark 4}{(9.884,5.893)}
\gppoint{gp mark 4}{(11.361,6.689)}
\gppoint{gp mark 4}{(10.862,1.589)}
\draw[gp path] (2.027,8.307)--(2.027,1.184)--(11.836,1.184)--(11.836,8.307)--cycle;
\gpdefrectangularnode{gp plot 1}{\pgfpoint{2.027cm}{1.184cm}}{\pgfpoint{11.836cm}{8.307cm}}
\end{tikzpicture}
}
\caption{Computation time required for the algorithm to terminate for $\kappa(A) = 10^4$.}
\label{fig:StopTime-1e4}
\scalebox{0.72}{\begin{tikzpicture}[gnuplot]
\tikzset{every node/.append style={font={\ttfamily\fontsize{12.0pt}{14.4pt}\selectfont}}}
\gpmonochromelines
\path (0.000,0.000) rectangle (12.500,8.750);
\gpcolor{color=gp lt color border}
\gpsetlinetype{gp lt border}
\gpsetdashtype{gp dt solid}
\gpsetlinewidth{1.00}
\draw[gp path] (2.027,1.184)--(2.207,1.184);
\draw[gp path] (11.836,1.184)--(11.656,1.184);
\node[gp node right] at (1.806,1.184) {$0.001$};
\draw[gp path] (2.027,1.613)--(2.117,1.613);
\draw[gp path] (11.836,1.613)--(11.746,1.613);
\draw[gp path] (2.027,2.180)--(2.117,2.180);
\draw[gp path] (11.836,2.180)--(11.746,2.180);
\draw[gp path] (2.027,2.471)--(2.117,2.471);
\draw[gp path] (11.836,2.471)--(11.746,2.471);
\draw[gp path] (2.027,2.609)--(2.207,2.609);
\draw[gp path] (11.836,2.609)--(11.656,2.609);
\node[gp node right] at (1.806,2.609) {$0.01$};
\draw[gp path] (2.027,3.037)--(2.117,3.037);
\draw[gp path] (11.836,3.037)--(11.746,3.037);
\draw[gp path] (2.027,3.604)--(2.117,3.604);
\draw[gp path] (11.836,3.604)--(11.746,3.604);
\draw[gp path] (2.027,3.895)--(2.117,3.895);
\draw[gp path] (11.836,3.895)--(11.746,3.895);
\draw[gp path] (2.027,4.033)--(2.207,4.033);
\draw[gp path] (11.836,4.033)--(11.656,4.033);
\node[gp node right] at (1.806,4.033) {$0.1$};
\draw[gp path] (2.027,4.462)--(2.117,4.462);
\draw[gp path] (11.836,4.462)--(11.746,4.462);
\draw[gp path] (2.027,5.029)--(2.117,5.029);
\draw[gp path] (11.836,5.029)--(11.746,5.029);
\draw[gp path] (2.027,5.320)--(2.117,5.320);
\draw[gp path] (11.836,5.320)--(11.746,5.320);
\draw[gp path] (2.027,5.458)--(2.207,5.458);
\draw[gp path] (11.836,5.458)--(11.656,5.458);
\node[gp node right] at (1.806,5.458) {$1$};
\draw[gp path] (2.027,5.887)--(2.117,5.887);
\draw[gp path] (11.836,5.887)--(11.746,5.887);
\draw[gp path] (2.027,6.454)--(2.117,6.454);
\draw[gp path] (11.836,6.454)--(11.746,6.454);
\draw[gp path] (2.027,6.744)--(2.117,6.744);
\draw[gp path] (11.836,6.744)--(11.746,6.744);
\draw[gp path] (2.027,6.882)--(2.207,6.882);
\draw[gp path] (11.836,6.882)--(11.656,6.882);
\node[gp node right] at (1.806,6.882) {$10$};
\draw[gp path] (2.027,7.311)--(2.117,7.311);
\draw[gp path] (11.836,7.311)--(11.746,7.311);
\draw[gp path] (2.027,7.878)--(2.117,7.878);
\draw[gp path] (11.836,7.878)--(11.746,7.878);
\draw[gp path] (2.027,8.169)--(2.117,8.169);
\draw[gp path] (11.836,8.169)--(11.746,8.169);
\draw[gp path] (2.027,8.307)--(2.207,8.307);
\draw[gp path] (11.836,8.307)--(11.656,8.307);
\node[gp node right] at (1.806,8.307) {$100$};
\draw[gp path] (2.027,1.184)--(2.027,1.364);
\draw[gp path] (2.027,8.307)--(2.027,8.127);
\node[gp node center] at (2.027,0.814) {20};
\draw[gp path] (2.891,1.184)--(2.891,1.274);
\draw[gp path] (2.891,8.307)--(2.891,8.217);
\draw[gp path] (3.503,1.184)--(3.503,1.274);
\draw[gp path] (3.503,8.307)--(3.503,8.217);
\draw[gp path] (3.979,1.184)--(3.979,1.364);
\draw[gp path] (3.979,8.307)--(3.979,8.127);
\node[gp node center] at (3.979,0.814) {50};
\draw[gp path] (4.367,1.184)--(4.367,1.274);
\draw[gp path] (4.367,8.307)--(4.367,8.217);
\draw[gp path] (4.695,1.184)--(4.695,1.274);
\draw[gp path] (4.695,8.307)--(4.695,8.217);
\draw[gp path] (4.980,1.184)--(4.980,1.274);
\draw[gp path] (4.980,8.307)--(4.980,8.217);
\draw[gp path] (5.231,1.184)--(5.231,1.274);
\draw[gp path] (5.231,8.307)--(5.231,8.217);
\draw[gp path] (5.455,1.184)--(5.455,1.364);
\draw[gp path] (5.455,8.307)--(5.455,8.127);
\node[gp node center] at (5.455,0.814) {100};
\draw[gp path] (6.932,1.184)--(6.932,1.364);
\draw[gp path] (6.932,8.307)--(6.932,8.127);
\node[gp node center] at (6.932,0.814) {200};
\draw[gp path] (7.795,1.184)--(7.795,1.274);
\draw[gp path] (7.795,8.307)--(7.795,8.217);
\draw[gp path] (8.408,1.184)--(8.408,1.274);
\draw[gp path] (8.408,8.307)--(8.408,8.217);
\draw[gp path] (8.883,1.184)--(8.883,1.364);
\draw[gp path] (8.883,8.307)--(8.883,8.127);
\node[gp node center] at (8.883,0.814) {500};
\draw[gp path] (9.272,1.184)--(9.272,1.274);
\draw[gp path] (9.272,8.307)--(9.272,8.217);
\draw[gp path] (9.600,1.184)--(9.600,1.274);
\draw[gp path] (9.600,8.307)--(9.600,8.217);
\draw[gp path] (9.884,1.184)--(9.884,1.274);
\draw[gp path] (9.884,8.307)--(9.884,8.217);
\draw[gp path] (10.135,1.184)--(10.135,1.274);
\draw[gp path] (10.135,8.307)--(10.135,8.217);
\draw[gp path] (10.360,1.184)--(10.360,1.364);
\draw[gp path] (10.360,8.307)--(10.360,8.127);
\node[gp node center] at (10.360,0.814) {1000};
\draw[gp path] (11.836,1.184)--(11.836,1.364);
\draw[gp path] (11.836,8.307)--(11.836,8.127);
\node[gp node center] at (11.836,0.814) {2000};
\draw[gp path] (2.027,8.307)--(2.027,1.184)--(11.836,1.184)--(11.836,8.307)--cycle;
\node[gp node center,rotate=-270] at (0.295,4.745) {Computation time [s]};
\node[gp node center] at (6.931,0.259) {Size of the matrix $A$ ($m$)};
\node[gp node right] at (10.109,2.489) {Algorithm 1};
\draw[gp path] (10.330,2.489)--(11.394,2.489);
\draw[gp path] (3.979,3.433)--(5.455,3.937)--(6.932,4.889)--(8.408,5.869)--(9.884,6.596)%
  --(11.361,7.973);
\gpsetpointsize{4.00}
\gppoint{gp mark 8}{(3.979,3.433)}
\gppoint{gp mark 8}{(5.455,3.937)}
\gppoint{gp mark 8}{(6.932,4.889)}
\gppoint{gp mark 8}{(8.408,5.869)}
\gppoint{gp mark 8}{(9.884,6.596)}
\gppoint{gp mark 8}{(11.361,7.973)}
\gppoint{gp mark 8}{(10.862,2.489)}
\node[gp node right] at (10.109,2.039) {Algorithm 2};
\draw[gp path] (10.330,2.039)--(11.394,2.039);
\draw[gp path] (3.979,3.557)--(5.455,4.072)--(6.932,4.915)--(8.408,6.071)--(9.884,6.624)%
  --(11.361,8.010);
\gppoint{gp mark 6}{(3.979,3.557)}
\gppoint{gp mark 6}{(5.455,4.072)}
\gppoint{gp mark 6}{(6.932,4.915)}
\gppoint{gp mark 6}{(8.408,6.071)}
\gppoint{gp mark 6}{(9.884,6.624)}
\gppoint{gp mark 6}{(11.361,8.010)}
\gppoint{gp mark 6}{(10.862,2.039)}
\node[gp node right] at (10.109,1.589) {Algorithm 3};
\draw[gp path] (10.330,1.589)--(11.394,1.589);
\draw[gp path] (3.979,3.077)--(5.455,3.593)--(6.932,4.409)--(8.408,5.509)--(9.884,6.091)%
  --(11.361,7.008);
\gppoint{gp mark 4}{(3.979,3.077)}
\gppoint{gp mark 4}{(5.455,3.593)}
\gppoint{gp mark 4}{(6.932,4.409)}
\gppoint{gp mark 4}{(8.408,5.509)}
\gppoint{gp mark 4}{(9.884,6.091)}
\gppoint{gp mark 4}{(11.361,7.008)}
\gppoint{gp mark 4}{(10.862,1.589)}
\draw[gp path] (2.027,8.307)--(2.027,1.184)--(11.836,1.184)--(11.836,8.307)--cycle;
\gpdefrectangularnode{gp plot 1}{\pgfpoint{2.027cm}{1.184cm}}{\pgfpoint{11.836cm}{8.307cm}}
\end{tikzpicture}
}
\caption{Computation time required for the algorithm to terminate for $\kappa(A) = 10^7$.}
\label{fig:StopTime-1e7}
\end{figure}
Figures~\ref{fig:StopTime-1e1}--\ref{fig:StopTime-1e7} show the time
required for each algorithm to terminate as the matrix dimension varies.
Algorithm~\ref{alg:new-adaptive-error-bound} terminates fastest, followed by Algorithm~\ref{alg:adaptive-Tatsuoka}; Algorithm~\ref{alg:new-adaptive}
takes the longest.  This is consistent with the time-error plots:
although Algorithm~\ref{alg:new-adaptive} reduces the error rapidly, its
successive-approximation stopping test may fail to recognize that the approximation
is already sufficiently accurate.  If the stopping test is not
satisfied, the algorithm doubles $n$ and continues refining.  Since
$M=2N+1$ with $N=\lceil n\arsinh(\pi n)\rceil$, this refinement
increases the number of quadrature nodes by more than a factor of two,
which can increase the time to termination.


\section{Concluding remarks}
\label{sec:conclusion}

As a method for computing $\log(A)$,
Algorithm~\ref{alg:adaptive-Tatsuoka} of Tatsuoka et al.~\cite{TatsuEn},
which is based on the DE formula, is attractive for parallel computation because
the integrand evaluations at different quadrature nodes can be
carried out independently.  When the truncation error is negligible,
its quadrature error decreases exponentially with the number $M$ of nodes.
Tatsuoka et al.~\cite{Tatsu} report that, for normalized Hermitian
positive definite matrices, the DE formula converges faster than
Gauss--Legendre quadrature when the condition number is sufficiently
large (approximately $\kappa(A)\gtrsim 2.7\times10^3$).  However, the interval
selection in their practical algorithm requires matrix norm evaluations,
which can be costly.  Moreover, because the integration interval is fixed,
the truncation error does not decrease as $M$ increases; thus, this
formulation does not theoretically guarantee convergence to $\log(A)$ as
$M\to\infty$.

To address these limitations, we proposed two algorithms based on the DE
formula.  Algorithm~\ref{alg:new-adaptive} does not require matrix norm
computations or eigenvalue information, and is therefore applicable when
such information is unavailable or expensive to obtain.  In contrast,
Algorithm~\ref{alg:new-adaptive-error-bound} uses an error bound
computable from the extreme eigenvalues of $A$ to determine the number of
quadrature nodes before evaluating the integrand, avoiding successive
refinement.
The main error bound for Hermitian positive definite matrices is given
in Theorem~\ref{thm:DE_logA_error_simple};
Theorem~\ref{thm:DE_logA_error} gives its condition-number-based form
under the normalization $\lambda_{\min}\lambda_{\max}=1$.  We further
extend the overall-error bounds to diagonalizable matrices with positive
real eigenvalues in Theorems~\ref{thm:DE_logA_error_simple_diagonalizable}
and~\ref{thm:DE_logA_error_diagonalizable}.  This extension concerns
the error analysis; the error-bound-based algorithm (Algorithm~\ref{alg:new-adaptive-error-bound})
is formulated for Hermitian positive definite matrices.  The convergence analyses
show that the proposed methods have slightly slower error decay with
respect to $M$ than Algorithm~\ref{alg:adaptive-Tatsuoka}.  Nevertheless,
in the numerical experiments, they achieve lower errors at comparable
elapsed times over the plotted ranges.

The error bound used by Algorithm~\ref{alg:new-adaptive-error-bound}
controls the quadrature error but does not account for floating-point
rounding.  In particular, for the tested matrix with
$\kappa(A)=10^7$, the observed error slightly exceeds the reported bound.
An important direction for future work is to incorporate rounding errors
and validated evaluation of the bound into a rigorous accuracy guarantee,
and to develop a numerical library that provides such certified
computations.

\appendix
\renewcommand{\thetheorem}{\Alph{section}.\arabic{theorem}}
\section{An auxiliary bound for tail trapezoidal sums}
\label{app:tail_trapezoidal_bound}

This appendix provides an auxiliary estimate used in
Section~\ref{subsec:total_error} to justify the bound on the tail
trapezoidal sums. The lemma and proof are supplied here as a supplement
to the analysis of Tatsuoka et al., rather than as a result
attributed to them.

\begin{lemma}\label{lem:tail_trapezoidal_bound}
Let $A\ne I$ be Hermitian positive definite, and fix finite $l<r$.
Define $T$ and $T_h$ by
\begin{align*}
T&=\int_{-\infty}^{l}F_{\mathrm{DE}}(x)\D x
+\int_r^{\infty}F_{\mathrm{DE}}(x)\D x,\\
T_h&=h\sum_{k=-\infty}^{-1}F_{\mathrm{DE}}(l+kh)
+h\sum_{k=1}^{\infty}F_{\mathrm{DE}}(r+kh)
+\frac h2\{F_{\mathrm{DE}}(l)+F_{\mathrm{DE}}(r)\},
\qquad h>0.
\end{align*}
Then, $\|T\|_2\ne0$, and there exists $h_0>0$, depending on $A$, $l$,
and $r$, such that, for every $0<h<h_0$,
\[
\|T_h-T\|_2\le\|T\|_2,
\qquad
\|T_h\|_2\le2\|T\|_2.
\]
\end{lemma}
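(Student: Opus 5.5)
The proof reduces both matrices to scalar quantities by unitary diagonalization, as in \eqref{eq:reduction-to-scalar} and \eqref{eq:4-24}. Write $A=U\diag(\lambda_1,\ldots,\lambda_m)U^{*}$ with $U$ unitary and $\lambda_i>0$. Then $F_{\mathrm{DE}}(x)=U\diag(f_{\mathrm{DE}}(x,\lambda_i))U^{*}$, so
\[
T=U\diag(t(\lambda_i))U^{*},\qquad T_h=U\diag(t_h(\lambda_i))U^{*},
\]
where $t(\lambda)$ and $t_h(\lambda)$ are the scalar tail integral and scalar tail trapezoidal sum built from $f_{\mathrm{DE}}(\cdot,\lambda)$. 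Consequently
\[
\|T\|_2=\max_i|t(\lambda_i)|,\qquad \|T_h-T\|_2=\max_i|t_h(\lambda_i)-t(\lambda_i)|.
\]

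\emph{Step 1: $\|T\|_2\ne0$.} Since $A\ne I$ and $A$ is diagonalizable, some eigenvalue satisfies $\lambda\ne1$. For real $x$, $f_{\mathrm{DE}}(x,\lambda)$ is continuous and has the strict constant sign of $\lambda-1$, as already used in the proof of Theorem~\ref{thm:DE_logA_error_simple}. Both tail intervals $(-\infty,l)$ and $(r,\infty)$ have positive length, so $t(\lambda)$ is a sum of two nonzero numbers of the same sign. Hence $t(\lambda)\ne0$ and $\|T\|_2>0$.

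\emph{Step 2: $t_h(\lambda)\to t(\lambda)$ as $h\to0$ for each fixed $\lambda$.} I treat the right tail; the left tail is symmetric. Fix $R>r$ and split both the sum and the integral at $R$.
\begin{itemize}
\item On $[r,R]$, the weights $h/2$ at $r$ and $h$ at $r+kh$ form a trapezoidal-type Riemann sum of a continuous function on a compact interval. It converges to $\int_r^R f_{\mathrm{DE}}(x,\lambda)\D x$, with an $\OO(h)$ boundary mismatch at $R$.
\item Beyond $R$, the estimates in the proof of Lemma~\ref{lemma:FDE_tail_bound} give $|f_{\mathrm{DE}}(x,\lambda)|\le(|\lambda-1|/\lambda)\,\pi\cosh x\,\E^{-\pi\sinh x}$, and this bound is decreasing. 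Therefore $h\sum_{r+kh>R}|f_{\mathrm{DE}}(r+kh,\lambda)|$ is bounded by the integral of the bound from $R-h$ to $\infty$, which equals $(|\lambda-1|/\lambda)\E^{-\pi\sinh(R-h)}$. The integral over $[R,\infty)$ obeys the same bound.
\item For the left tail, use the bound $|\lambda-1|\pi\cosh x\,\E^{\pi\sinh x}$, which is increasing, in the same way.
\end{itemize}
Given $\eta>0$, first choose $R$ so that both far-tail contributions are below $\eta/3$ for all $h\le1$. Then choose $h$ small enough for the compact-interval Riemann sums. This gives $|t_h(\lambda)-t(\lambda)|<\eta$.

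\emph{Step 3: conclusion.} There are only finitely many eigenvalues. Applying Step 2 with $\eta=\|T\|_2>0$ to each of them and taking $h_0$ to be the minimum of the resulting thresholds, we get for $0<h<h_0$
\[
\|T_h-T\|_2=\max_i|t_h(\lambda_i)-t(\lambda_i)|\le\|T\|_2 .
\]
The triangle inequality then gives $\|T_h\|_2\le\|T_h-T\|_2+\|T\|_2\le2\|T\|_2$. The threshold $h_0$ depends only on $A$ (through its eigenvalues), $l$, and $r$.

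The main obstacle is Step 2, the convergence of the infinite one-sided Riemann sums. The splitting at $R$, together with the monotone majorants already available from the proof of Lemma~\ref{lemma:FDE_tail_bound}, is the intended way around it.
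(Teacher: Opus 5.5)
Your proof is correct. Step~1 is the same sign argument the paper uses, but your proof of $T_h\to T$ takes a different route.

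The paper does not diagonalize for the convergence part. It works with the matrix-valued integrand directly and, by integration by parts, writes the trapezoidal error on each mesh interval as $\int_u^{u+h}(x-u-h/2)F_{\mathrm{DE}}'(x)\D x$. Summing over both tails gives the quantitative estimate $\|T_h-T\|_2\le Kh$, where $K=\tfrac12\{\int_{-\infty}^{l}\|F_{\mathrm{DE}}'(x)\|_2\D x+\int_r^{\infty}\|F_{\mathrm{DE}}'(x)\|_2\D x\}$. So one can take, e.g., $h_0=\|T\|_2/K$ explicitly. That buys an $\OO(h)$ rate and an explicit threshold. The price is integrability of $F_{\mathrm{DE}}'$ on the tails, which the paper only asserts from the double-exponential decay of the derivative read off its explicit form.

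Your argument instead reduces to finitely many scalar problems. It uses only continuity of $f_{\mathrm{DE}}(\cdot,\lambda)$ and the monotone majorants $(|\lambda-1|/\lambda)\pi\cosh x\,\E^{-\pi\sinh x}$ and $|\lambda-1|\pi\cosh x\,\E^{\pi\sinh x}$. Both majorants are valid for all real $x$ by the proof of Lemma~\ref{lemma:FDE_tail_bound}, and your bounds of the far sums by $\int_{R-h}^{\infty}$ (and its mirror image) are correct. Your route needs no derivative estimates, but it is purely qualitative: it gives no rate and no explicit $h_0$. It also relies on unitary diagonalizability for the reduction $\|T_h-T\|_2=\max_i|t_h(\lambda_i)-t(\lambda_i)|$.

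One bookkeeping point: your $\eta/3$ split controls a single tail. With both tails you should allot, say, $\eta/6$ to each of the six pieces; this is cosmetic.
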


\begin{proof}
Since $A\ne I$ is Hermitian positive definite, it has an eigenvalue
$\lambda\ne1$. For real $x$, the denominator in the definition of
$f_{\mathrm{DE}}(x,\lambda)$ is positive, so this function has the
constant nonzero sign of $\lambda-1$.
Thus, its two tail integrals have the same nonzero sign.
By unitary diagonalization, their sum is an eigenvalue of $T$,
which proves $\|T\|_2\ne0$.

Next, the explicit expression for $F_{\mathrm{DE}}$ shows that it and
its first derivative decay double-exponentially on the real axis.
In particular, the tail sums converge absolutely, and
the following quantity
\[
K=\frac12\left\{
\int_{-\infty}^{l}\|F_{\mathrm{DE}}'(x)\|_2\D x
+\int_r^{\infty}\|F_{\mathrm{DE}}'(x)\|_2\D x
\right\}
\]
is finite.
For any real $u$, integration by parts gives
\[
\frac h2\{F_{\mathrm{DE}}(u)+F_{\mathrm{DE}}(u+h)\}
-\int_u^{u+h}F_{\mathrm{DE}}(x)\D x
=\int_u^{u+h}\left(x-u-\frac h2\right)F_{\mathrm{DE}}'(x)\D x.
\]
Taking norms and summing over the mesh intervals in both tails yields
\[
\|T_h-T\|_2\le Kh\longrightarrow0\qquad(h\to0).
\]
By the preceding argument ($\|T\|_2\neq 0$), we have $\|T\|_2>0$. Therefore, there exists $h_0>0$ such that
$Kh\le\|T\|_2$ for $0<h<h_0$. The triangle inequality then gives
\[
\|T_h\|_2\le\|T\|_2+\|T_h-T\|_2
\le2\|T\|_2,
\]
as required.
\end{proof}

\section*{Declaration of competing interest}

The authors declare no known competing financial interests or personal
relationships that could have appeared to influence the work reported in
this paper.

\section*{Data availability}

The source code and numerical data supporting the findings of this study are
publicly available in the GitHub repository at
\url{https://github.com/okayamat/matrix-log-DE}.

\section*{Declaration of generative AI and AI-assisted technologies in the manuscript preparation process}

During the preparation of this manuscript, the authors used OpenAI Codex
to assist with language editing, consistency checks, and LaTeX
cross-reference checks. The authors reviewed
and revised the suggested changes and take full responsibility for the
final content.



\bibliographystyle{plainnat}
\bibliography{MatrixLogDE}



%
%
%
\end{document}